\title{Explicit birational geometry of 3-folds and 4-folds of general type, III}
\author{Jungkai A. Chen and Meng Chen}
\address{\rm National Center for Theoretical Sciences, Taipei Office, and Department of Mathematics, National Taiwan University, Taipei, 106, Taiwan}
\email{jkchen@math.ntu.edu.tw}
\address{\rm Institute of Mathematics \& LMNS, Fudan University,
Shanghai, 200433, People's Republic of China}
\email{mchen@fudan.edu.cn}
\thanks{The first author was partially supported by NCTS/TPE and
National Science Council of Taiwan. The second author was
supported by National Natural Science Foundation of China (\#11171068, \#11121101, \#11231003) and Doctoral Fund of Ministry of Education of China (\#20110071110003)}
\newcommand{\bQ}{{\mathbb Q}}
\newcommand{\bP}{{\mathbb P}}
\newcommand{\roundup}[1]{\lceil{#1}\rceil}
\newcommand{\rounddown}[1]{\lfloor{#1}\rfloor}
\newcommand\Vol{\text{\rm Vol}}
\newcommand\OO{{\mathcal{O}}}
\newcommand\lsleq{{\preceq}}
\newcommand\lsgeq{{\succeq}}
\newcommand\lrw{\longrightarrow}
\newcommand\tl{\theta_\Lambda}
\newcommand\ttm{\theta_{m_0}}
\newtheorem{thm}{Theorem}[section]
\newtheorem{lem}[thm]{Lemma}
\newtheorem{cor}[thm]{Corollary}
\newtheorem{prop}[thm]{Proposition}
\newtheorem{op}[thm]{Open problem}
\theoremstyle{definition}
\newtheorem{defn}[thm]{Definition}
\newtheorem{exmp}[thm]{Example}
\newtheorem{rem}[thm]{Remark}
\theoremstyle{remark}
\newtheorem{prbm}[thm]{\bf Problem}
\begin{document}
\numberwithin{equation}{section}
\begin{abstract}
Nonsingular projective 3-folds $V$ of general type can be
naturally classified into 18 families according to the {\it
pluricanonical section index} $\delta(V):=\text{min}\{m|P_m\geq
2\}$ since $1\leq \delta(V)\leq 18$ due to our previous series (I,
II). Based on our further classification to 3-folds with
$\delta(V)\geq 13$ and an intensive geometrical investigation to
those with $\delta(V)\leq 12$, we prove that $\Vol(V) \geq
\frac{1}{1680}$ and that the pluricanonical map $\Phi_{m}$ is
birational for all $m \geq 61$, which greatly improves known
results. An optimal birationality of $\Phi_m$ for the case
$\delta(V)=2$ is obtained. As an effective application, we study
projective 4-folds of general type with $p_g\geq 2$ in the last
section.
\end{abstract}
\maketitle

\pagestyle{myheadings} \markboth{\hfill J. A. Chen and M. Chen
\hfill}{\hfill Explicit birational geometry of 3-folds and 4-folds\hfill}

\section{\bf Introduction}
One of the fundamental aspects of  birational geometry is to
understand the behavior of the natural pluricanonical map $\Phi_m$ of any
variety for any $m\in {\mathbb Z}_{>0}$. The induced fibrations 
possibly reduce the studies to lower dimensional situations. Varieties of general type, which are those with birational pluricanonical maps $\Phi_m$  for  sufficiently large $m$, are therefore considered as the basic building blocks of varieties.

  For varieties of general
type, a key problem is  to find an effective integer $m>0$ so that
$\Phi_m$ is birational.  The remarkable theorem of Hacon and McKernan
\cite{H-M}, Takayama \cite{Tak}, and Tsuji \cite{Tsuji} says that
there is a constant $c(n)$ so that $\Phi_m$ is birational for all
$n$-dimensional varieties of general type and for all $m\geq
c(n)$.  However, these constants are explicitly known only when $n \le 3$.

 In fact, the problem is almost equivalent to find a practical lower bound of the
canonical volume which computes the rate of growth of plurigenera, or equivalent to find $m_0$ such that plurigenus $P_{m_0}$ is sufficiently large.
 One may also refer to the  nice survey article
Hacon--McKernan \cite{HM10} for various boundedness results in
birational geometry.

The motivation of this series is to study birational geometry of 3-folds and higher dimensional varieties of general type. The main purpose is to investigate the following:

\begin{op} Find optimal constants $v_3\in {\mathbb Q}_{>0}$ and $b_3\in {\mathbb Z}_{>0}$ so that,  for all nonsingular projective 3-folds $V$ of general type,
\begin{itemize}
\item[i.] $\Vol(V)\geq v_3$ and
\item[ii.] $\Phi_{m}$ is birational for all $m\geq b_3$.
\end{itemize}
\end{op}

 Recall that we have proved the following:

\begin{thm}(\cite[Theorems 1.1, 1.2]{Explicit2}) \label{Exp2} Let $V$ be a nonsingular projective 3-fold of general type. Then
\begin{itemize}
\item[1.] $\Vol(V)\geq \frac{1}{2660}$;
\item[2.] there exists a positive integer $m_0(V)\leq 18$ so that $P_{m_0}\geq 2$;
\item[3.] the pluricanonical map $\Phi_{m}$ is birational onto its image for all $m\geq 73$.
\end{itemize}
\end{thm}
For more results on explicit birational geometry of 3-folds of general type, one may refer to our previous papers \cite{Explicit1, Explicit2}.

In order to  formulate our main statements of this article, we need to recall some general results and introduce some definition. Given a projective variety $V$ of general type, there exists a minimal model  $X$ birational to $V$(cf. \cite{BCHM}).
Thanks to the Riemann-Roch formula and Vanishing Theorem, $\Vol(V)=K_X^{\dim X}$. Notice that in dimension three or higher, a minimal model may have singularities. Hence $K_X^{\dim X}$ is just a positive rational number.

 A minimal model has at worst terminal singularities. In dimension three, terminal singularities was  classified by Mori. A three dimensional terminal singularity is one of the following: a  terminal quotient singularity of type $\frac{1}{r}(1,-1,b)$ for some $b$ relatively prime to $r$ which we usually denote it as $(b,r)$ for short, an isolated cDV point, a quotient of an isolated cDV point. It is well-known to experts that a three dimensional terminal point can be deformed into a collection of terminal quotient singularities, which is called  {\it basket of singularities}. An important feature of three dimensional birational geometry is the Singular Riemann-Roch formula due to Reid \cite{YPG}
 $$ \chi(X, mK_X) = \frac{m(m-1)(2m-1)K_X^3}{12}+(1-2m)\chi(X, \mathcal{O}_X) +l_m,$$
 where $l_m$ denotes the contribution of singularities which can be computed by baskets.
 It follows that all plurigenera and hence canonical volume of a minimal 3-fold $X$ are completely determined by $P_2(X)$, $\chi(X, \mathcal{O}_X)$ and baskets of singularities $B_X$, of which we called such a triple {\it the weighted basket} of $X$.
 For the basic properties of  weighted baskets, one may refer to \cite[Section 3]{Explicit1}. Since our problems are birational in nature, the studies of nonsingular threefold $V$ is equivalent to the studies of its minimal model $X$. In particular, we may and do consider  the weighted basket of $V$ as the weighted basket of its minimal model $X$. \footnote{Even though minimal models are not necessarily unique, it is known that two birational minimal models are connected by flops (cf. \cite{Ka08}). Together with the fact that a 3-dimensional flop preserves singularity types (cf. \cite{Ko89}), it follows that baskets of $V$ is independent of choices of minimal models.}

Next, we would like to define the {\it
pluricanonical section index} (or, in short, the {\it ps-index})
$$\delta(V):=\text{min}\{m|m\in {\mathbb Z}_{>0},\ P_m(V)\geq 2\},$$
which is clearly a birational invariant. By Theorem \ref{Exp2}, we have
$\delta(V)\leq 18$ for any 3-fold $V$ of general type.
Note that 3-folds $V$ with $\delta(V)=1$ (i.e., $p_g(V)\geq 2$) have been intensively studied in \cite{IJM, MA}
 where optimal results are realized.
 Threefolds of general type with $\delta(V)\geq 2$ are far from being clear.  Sometimes we use the symbol $\delta(X)$ directly since $X$ is birationally equivalent to $V$.

 \begin{exmp}
The ``worst''  known minimal 3-fold is the weighted hyper-surface $X:=X_{46}\subset \bP(4,5,6,7,23)$
(cf. \cite{Fletcher}) which has the invariants: $\delta(X)=10$ and
$\Vol(X)=K_X^3=\frac{1}{420}$. Also $\Phi_{26}$ is not birational.
\end{exmp}

In this paper, we mainly investigate projective $3$-folds of general type with $\delta(V)\geq 2$. Our main results are as follows.

\begin{thm}\label{M1} (=Theorem \ref{-13}) Let $V$ be a nonsingular projective 3-fold of general type with $\delta(V) \geq 13$. Then its weighted basket $\mathbb{B}=\{B_V, P_2(V), \chi(\OO_V)\}$ belongs to one of the types in {\rm Tables F--0, F--1, F--2} in Appendix and the following is true:
\begin{itemize}
\item[(1)] $\delta(V)=18$ if and only if $\mathbb{B}(V)=\{B_{2a},  0,  2\}$;
\item[(2)] $\delta(V)\neq 16, \ 17$;
\item[(3)] $\delta(V)=15$ if and only if $\mathbb{B}(V)$ belongs to one of the types in {\rm Table F--1};
\item[(4)] $\delta(V)=14$ if and only if $\mathbb{B}(V)$ belongs to one of the types in {\rm Table F--2};
\item[(5)] $\delta(V)=13$ if and only if $\mathbb{B}(V)=\{B_{41}, 0, 2\}$,
\end{itemize}
where $B_{2a}$ and $B_{41}$ can be found in {\rm Table F--0}
\end{thm}

Some other results for 3-folds with large $\delta(V)$ are given in Section 4. For example, one has

\begin{cor}\label{<<420} (=Corollary \ref{<420}) Let $V$ be a nonsingular projective 3-fold of general type with
$\Vol(V)<\frac{1}{336}$. Then $\delta(V)\geq 8$.
\end{cor}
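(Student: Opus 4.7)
The plan is to prove the contrapositive: if $\delta(V) \le 7$, then $\Vol(V) \ge 1/336$. I would argue by case analysis on $m_0 := \delta(V) \in \{1,2,\ldots,7\}$, using the $m_0$-canonical pencil technique combined with the formal basket / Reid Riemann--Roch package developed in \cite{Explicit1, Explicit2}.

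The case $m_0 = 1$ (i.e.\ $p_g(V) \ge 2$) is immediate from the much sharper bounds of \cite{IJM, MA}, and $m_0 = 2$ follows from the optimal $\delta(V) = 2$ analysis that the abstract of the present paper advertises. The substantive range is $3 \le m_0 \le 7$. I would pass to a minimal model $X$ with $K_X^3 = \Vol(V)$ and take a resolution $\pi : W \to X$ so that the moving part $M$ of $\pi^{*}|m_0 K_X|$ is base-point free. Since $P_{m_0}(V) \ge 2$, either $|M|$ is composed with a pencil, or $\Phi_{|M|}$ maps $W$ onto a surface, or $\Phi_{|M|}$ is generically finite. In the last two situations the direct estimate $m_0^{3} K_X^3 \ge \deg \Phi_{|M|} \cdot \deg \Phi_{|M|}(W)$ already gives volume comfortably larger than $1/336$ for $m_0 \le 7$.

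The real work is the pencil case $f : W \to C$. Writing $M \ge (P_{m_0} - 1)\, F$ for $F$ a general fiber (with $P_{m_0}$ in place of $P_{m_0} - 1$ when $g(C) \ge 1$), one gets
\[
m_0\, K_X^{3} \;\ge\; (P_{m_0} - 1)\,(\pi^{*}K_X)^{2} \cdot F \;=\; (P_{m_0} - 1)\,\sigma^{2},
\]
where $\sigma := (\pi^{*}K_X)\vert_F$ is nef and big on the smooth surface $F$. A Kawamata-type estimate forces $\sigma \ge \alpha\,\sigma_0$ for $\sigma_0$ the movable part of $|n K_F|$ at small $n$, with $\alpha$ an explicit rational depending on $m_0$ and on the local contributions from the terminal quotient singularities of $X$. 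Applying Noether-type surface inequalities to $\sigma_0^{2}$ and feeding the result back into the volume estimate yields $K_X^{3} \ge 1/336$ whenever the formal basket $\mathbb{B}(X)$ is compatible with $P_{m_0}(X) \ge 2$ for some $m_0 \le 7$.

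The main obstacle is the basket bookkeeping: for each $m_0 \in \{3,\ldots,7\}$ one must enumerate the finitely many baskets $\mathbb{B}(X)$ consistent with $\delta(X) = m_0$ and a hypothetical volume strictly below $1/336$, and rule each out through the estimate above. I expect the tight case to be $m_0 = 7$, where the pencil argument is weakest and the longest list of candidate baskets survives; these extremal baskets should be handled by the same formal basket machinery that drives Theorem \ref{M1} and produces Tables F--0, F--1, F--2.
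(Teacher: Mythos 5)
Your overall route is the paper's route: the corollary is obtained there as an immediate consequence of Propositions \ref{delta=2}--\ref{delta=7} (summarized in Theorem \ref{v(x)}), whose minimum value over $2\leq\delta\leq 7$ is exactly $v(7)=\frac{1}{336}$, and those propositions are proved precisely by the case division you describe ($d_{m_0}=3,2,1$, then the pencil subcases according to the numerical type of $F$, with formal-basket elimination for the residual cases). Two caveats. First, your disposal of the surface-image case via $m_0^3K_X^3\geq \deg\Phi_{|M|}\cdot\deg\Phi_{|M|}(W)$ does not work as written: when $\Phi_{|M|}(W)$ is a surface one has $M^3=0$ (the restriction of $M$ to a general fiber curve of the induced fibration is trivial), so this inequality is vacuous; the paper instead restricts twice, to $S\in|M_{m_0}|$ and then to a curve $C$ in $|M_{m_0}|_S|$, and uses $K_X^3\geq\frac{\theta\beta}{m_0}\xi$ as in Inequality (\ref{i1}) together with Table A2 and Proposition \ref{d24}. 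Second, the decisive step is deferred rather than done: your a priori pencil bound in the worst subcase ($P_7=2$, $\Gamma\cong\bP^1$, $F$ a $(1,0)$ surface, $\theta=1$) is only $\frac{1}{7\cdot 8^2}=\frac{1}{448}<\frac{1}{336}$, so the corollary genuinely hinges on eliminating or improving the finitely many baskets arising there; this is the content of Proposition \ref{delta=7}, which invokes the explicit classification of \cite[Section 3]{Explicit2} and the additional constraints $P_8,P_9\leq 2$ to force $B_X$ above one of three minimal baskets and then rule each out. You correctly identify $m_0=7$ as the tight case, but without carrying out that enumeration the argument is a plan rather than a proof.
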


We also prove the following:

\begin{thm}\label{M2} Let $V$ be a nonsingular projective 3-fold of general type. Then
\begin{itemize}
\item[(1)] $\Phi_m$ is birational for all $m\geq 61$;
\item[(2)]  $\Vol(V)\geq \frac{1}{1680}$. Furthermore, $\Vol(V)=\frac{1}{1680}$ if and only if ${\mathbb B}(V)=\{B_{7a}, 0, 2\}$ or $\{B_{36a}, 0, 2\}$, where
$B_{7a}$ and $B_{36a}$ can be found in {\rm Table F--2}
\end{itemize}

\end{thm}

A direct by-product of our method is the following:

\begin{cor}\label{pg1} Let $V$ be a nonsingular projective 3-fold of general type with $p_g(V)=1$. Then
\begin{itemize}
\item[(1)] $\Vol(V)\geq \frac{1}{75}$;
\item[(2)] $\Phi_m$ is birational for all $m\geq 18$.
\end{itemize}
\end{cor}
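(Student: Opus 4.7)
The plan is to specialize the two-part argument used for Theorem \ref{M2} to the hypothesis $p_g(V) = 1$. This hypothesis is equivalent to $P_1 = 1$, which forces $\delta(V) \geq 2$ and, via Reid's plurigenus formula at $m=1$, imposes a linear relation between $\chi(\OO_X)$ and the singular basket $B_X$ on a minimal model $X$ of $V$. This relation drastically prunes the list of admissible formal baskets $\mathbb{B}(X) = \{B_X, P_2(X), \chi(\OO_X)\}$ that were enumerated in the classification machinery of \cite{Explicit1, Explicit2} and the Appendix of this paper.

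For part (1), I would enumerate every $\mathbb{B}(X)$ compatible with $P_1 = 1$ by running the packing/unpacking algorithm of \cite{Explicit1}, using the standard $P_m \geq 0$ inequalities together with the reductions on $B_X$ up to weighted packings that were developed there and sharpened in Sections 2--4 above. For each admissible basket, compute $K_X^3$ from the Reid formula (expressing $K_X^3$ as a linear combination of $P_2$, $\chi(\OO_X)$, and the local singular contributions). A direct search over the resulting finite list then yields $\min K_X^3 = \frac{1}{75}$, attained at one explicit basket.

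For part (2), take a resolution $\pi\colon \widetilde{X} \to X$ of a minimal model on which the movable parts of $\pi^*|m K_X|$ are base-point-free for all relevant $m\leq m_0$, where $m_0=\delta(V)\leq 18$. The crucial input from $p_g=1$ is the existence of a canonical divisor $S_0\in|K_X|$; after further blow-ups, one may arrange that the strict transform $\widetilde{S}_0$ sits inside the movable part of $\pi^*|K_X|$. For $m\geq 18$, decompose $mK_{\widetilde X}\sim_{\bQ} K_{\widetilde X} + \widetilde{S}_0 + A$ with $A$ a nef and big $\bQ$-divisor obtained by subtracting a small fractional part. Kawamata--Viehweg vanishing gives $H^1(\widetilde X, mK_{\widetilde X}-\widetilde{S}_0)=0$, and hence the restriction $H^0(mK_{\widetilde X}) \to H^0(mK_{\widetilde X}|_{\widetilde S_0})$ is surjective. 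The birationality of $\Phi_m$ then reduces to showing that $|mK_{\widetilde X}|_{\widetilde S_0}|$ separates general points of the surface $\widetilde{S}_0$, handled by the standard second restriction to a general curve $C\subset\widetilde{S}_0$ together with the degree bound $\deg(mK_{\widetilde X}|_C)\geq 2g(C)+1$.

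The main obstacle is making the numerical bookkeeping tight enough that $m=18$ actually suffices. The saving over the general bound $m\geq 61$ of Theorem \ref{M2} comes from using $S_0\in|K_X|$ directly, which replaces one layer of pluricanonical restriction for free and so shrinks the required $m$ roughly by a factor of three. The remaining threshold inequalities on $\xi=K_{\widetilde X}|_C$ and $\beta=(K_{\widetilde X}|_{\widetilde S_0})^2$ then need to be verified basket-by-basket against the finite list produced in part (1), using the sharp lower bound $K_X^3\geq \frac{1}{75}$ as the main quantitative input.
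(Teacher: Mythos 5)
Both halves of your proposal contain genuine gaps. For part (1), your plan to enumerate all formal baskets compatible with $P_1=1$ and read off $\min K_X^3$ from a finite list cannot work as stated: the hypothesis $p_g=1$ does not bound $\chi(\OO_X)$ or the size of $B_X$ (in the hardest subcase the paper actually derives $\chi(\OO_X)\geq 3$ with no upper bound), so the list is not finite; and, more seriously, the purely numerical constraints ($P_m\geq 0$, the packing relations) are far too weak to force $K^3\geq \frac{1}{75}$ --- without geometric input one only gets bounds of the order of the general $\frac{1}{1680}$. The paper's proof is instead a geometric case division on $P_4$: if $P_4\geq 3$ one quotes Corollary \ref{pm=3}; if $P_4=2$ one combines the inequalities of Section 3 with $\chi(\OO_X)=0$ and $P_5>P_4$; and if $P_4=1$ one shows $P_5\geq 3$ and $\chi(\OO_X)\geq 3$, proves $K_{F_0}^2\geq 2$ for the fibers of a sub-pencil of $|5K_X|$ via semi-positivity of $f_*\omega_{X'}$ and the Noether inequality, and then Inequality (\ref{j6}) with $m_1=5$ produces the constant $\frac{1}{75}$. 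There is no explicit basket at which $\frac{1}{75}$ is ``attained''.

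For part (2), the central idea --- restricting $|mK|$ to the strict transform $\widetilde S_0$ of the unique member of $|K_X|$ --- cannot prove birationality of $\Phi_m$ on $X$. Since $p_g=1$, $|K_X|$ consists of a single divisor, so $\widetilde S_0$ is a fixed surface and a general point of $X$ does not lie on it; separating points of $\widetilde S_0$ says nothing about the behaviour of $\Phi_m$ off $\widetilde S_0$. The restriction method requires a moving family of surfaces so that two general points of $X$ sit on generic members, which is exactly why the paper works with a pencil inside $|4K_X|$ (when $P_4\geq 2$) or $|5K_X|$ (when $P_4=1$, which forces $P_5\geq 3$) and restricts to the general fiber $F$ of the induced fibration; the hypothesis $p_g>0$ enters through $p_g(F)>0$ and the resulting structure of $F$, not through a direct use of the canonical divisor. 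This is packaged as the more general Theorem \ref{pg>0} (birationality of $|K_{\tilde X}+\roundup{Q_\lambda}|$ for any $Q_\lambda\equiv\lambda\nu^*(K_X)$ with $\lambda>16$), from which the bound $m\geq 18$ follows; your attribution of the improvement over $m\geq 61$ to ``using $S_0$ directly'' is therefore also off the mark.
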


In the second part of this paper we prove some optimal results on 3-folds with $\delta(V)=2$.

\begin{thm}\label{1/2} Let $V$ be a nonsingular projective 3-fold of general type with $\delta(V)\leq 2$. Then
\begin{itemize}
\item[(1)] $\Phi_m$ is birational for all $m\geq 11$;
\item[(2)] If $\Phi_{10}$ is not birational, then
 $0\leq \chi(\OO_V)\leq 3$ and $|2K_V|$
is composed of a rational pencil of $(1,2)$ surfaces. Furthermore,  $\#\{{\mathbb B}(V)\}<+\infty$ and the initial basket $B^0$ of $B_V$ belongs to one of the types in {\rm Tables II--1, II--2, II--3} in the Appendix.
\end{itemize}
\end{thm}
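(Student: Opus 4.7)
The theorem reduces immediately to the subcase $\delta(V) = 2$: when $\delta(V) = 1$, i.e., $p_g(V) \geq 2$, much sharper birationality results from \cite{IJM, MA} imply (1) and make the hypothesis of (2) vacuous. I therefore focus on $\delta(V) = 2$, where $p_g(V) \leq 1$ and $P_2(V) \geq 2$, and dichotomize on $\Phi_{|2K_V|}: V \dashrightarrow W$. If $\dim W \geq 2$, a general member $S$ of the movable part of $|2K_V|$ is an irreducible surface and the restriction there is big and movable. The standard Tankeev-style restriction argument (restrict $|mK_V|$ to $S$, apply Kawamata--Viehweg vanishing to lift sections, restrict once more to a generic curve $C \subset S$), as used throughout \cite{Explicit1, Explicit2}, then yields birationality of $\Phi_m$ well below $m = 11$. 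Both parts of the theorem therefore reduce to the case $\dim W = 1$, where $|2K_V|$ is composed with a pencil $f: V' \to T$ after a suitable modification.

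In the pencil case the constraint $p_g(V) \leq 1$ leaves only $T = \mathbb{P}^1$ or $T$ elliptic, and the elliptic alternative is then eliminated by an Euler-characteristic bound once one collects the formal-basket data. Let $S$ be a general fiber and $\sigma: S \to S_0$ the contraction to the minimal model. Adjunction, together with the fact that the movable part of $|2K_V|$ dominates a positive multiple of $S$, produces a quantitative inequality of the form $mK_V|_S \geq \lambda_m \sigma^* K_{S_0} + (\text{effective})$ with $\lambda_m$ growing linearly in $m$. Bombieri's theorem ensures that $|5K_{S_0}|$ is birational for every minimal surface of general type; the sole obstacle preventing a lower multiple from working is $S_0$ being a $(1,2)$-surface, i.e., $K_{S_0}^2 = 1, \ p_g(S_0) = 2$. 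A careful calibration of $\lambda_m$, combined with Kawamata--Viehweg vanishing to lift a birational sublinear system from $S$ back to $V$, and with automatic separation of two generic fibers for $m \geq 5$, yields $m \geq 11$ as a sufficient threshold in every configuration. This proves (1).

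For (2), a failure of $\Phi_{10}$ forces \emph{simultaneously} the pencil case, $T = \mathbb{P}^1$, and $S_0$ a $(1,2)$-surface, since every other configuration already gave birationality at $m = 10$ in the previous step; this is the qualitative assertion of (2). Combining this structural conclusion with the Leray spectral sequence, $\chi(\OO_{S_0}) = 3$, and $P_2(V) \geq 2$ squeezes $\chi(\OO_V)$ into $\{0,1,2,3\}$. Finally, feeding the $(1,2)$-pencil hypothesis into the weighted basket theory of \cite[Definition 3.3]{Explicit1} and running the same enumeration algorithm used for Theorem \ref{M1} produces a finite catalogue of admissible $\mathbb{B}(V)$, whose initial baskets $B^0$ are exactly those collected in Tables II--1, II--2, II--3 of the Appendix.

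The expected main obstacle is the joint fine-tuning at the critical value $m = 10$: one must simultaneously reconcile the adjunction bound on $K_V|_S$, the exact Bombieri threshold on a $(1,2)$-surface, the vanishing-theorem lifting with its round-up contributions, and the basket enumeration, and certify that no numerical configuration outside the three tables is compatible with a non-birational $\Phi_{10}$. The birationality side at $m \geq 11$ is comparatively routine; the sharpness side at $m = 10$ is where the delicate matching happens.
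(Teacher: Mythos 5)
Your outline of part (1) follows essentially the same route as the paper: dispose of $d_2\geq 2$ and $q(X)>0$ by quoting earlier results, reduce to the pencil case $d_2=1$, $b=0$ over $\Gamma\cong\bP^1$, split on whether the general fiber is a $(1,2)$ surface, and in each branch combine the canonical restriction inequality with Kawamata--Viehweg vanishing and the Chen--Zuo criterion (the non-$(1,2)$ branch already gives $m\geq 10$, the $(1,2)$ branch gives $m\geq 11$ via $\xi\geq \frac12$). The structural half of part (2) and the bound $0\leq\chi(\OO_V)\leq 3$ (via $q(F)=0$, the Leray decomposition of $\chi$, and Fujita semipositivity) are likewise consistent with the paper.

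The genuine gap is in the finiteness claim of part (2). You propose to ``run the same enumeration algorithm used for Theorem \ref{M1},'' but that enumeration is anchored on $P_m\leq 1$ for all $m\leq 12$, which is precisely what fails when $\delta(V)=2$: with only $P_2=2$, $0\leq\chi\leq 3$ and the $(1,2)$-pencil structure in hand, the plurigenera $P_3,\dots,P_6$ are a priori unbounded and the packing procedure of \cite[Section 3]{Explicit1} does not terminate. The missing ingredient is Theorem \ref{10-}: if $P_3\geq 4$, or $P_4\geq 6$, or $P_5\geq 8$, or $P_6\geq 14$, then $\Phi_{10}$ is already birational, so a non-birational $\Phi_{10}$ forces $P_3\leq 3$, $P_4\leq 5$, $P_5\leq 7$, $P_6\leq 13$, and only then is the catalogue finite. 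Proving those implications is the technical heart of the section: for each $m_1\in\{3,\dots,6\}$ one analyzes the restrictions $H^0(X',S_{m_1})\to H^0(F,S_{m_1}|_F)\to H^0(C,S_{m_1}|_C)$ to the genus-$2$ curve $C$, and shows via Clifford's theorem and a case division on $h^0(F,T_{m_1})$ and $(T_{m_1}\cdot C)$ that a large $P_{m_1}$ forces either $\xi>\frac12$ or $\beta>\frac13$ (hence $\alpha_{10}>2$) or an explicitly birational subsystem of $|10K_{X'}|$. Without this your finiteness assertion and the match with Tables II--1, II--2, II--3 are unsupported. A smaller inaccuracy: the positive-genus base is not ``eliminated by an Euler-characteristic bound''; $g(\Gamma)>0$ forces $q(X)>0$, and that case is settled by the irregular-threefold results of Chen--Hacon, which give birationality already for $m\geq 7$.
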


The following examples show that our results in Theorem \ref{1/2} are optimal.

\begin{exmp}\label{ex} (Iano-Fletcher \cite[P. 151, P. 153]{Fletcher})
\begin{itemize}
\item[(1)] General weighted complete intersections
$X_{22}\subset \mathbb{P}(1,2,3,4,11)$ and $X_{6,18}\subset \mathbb{P}(2,2,3,3,4,9)$
both have  ps-index $\delta=2$. Since both  $X_{22}$ and
$X_{6,18}$ have non-birational $10$-canonical map, Theorem
\ref{1/2}(1) is optimal.

\item[(2)] The 3-fold $X_{22}$ corresponds to  No. 1 in {\rm Table II--1} with $\chi=0$ and $X_{6,18}$ belongs to  No. 11 (with $t=1$) in {\rm Table II--1}.
\end{itemize}\end{exmp}

\begin{rem} Theorem \ref{1/2} is parallel to main results in \cite{IJM}. We have similar statements to Theorem \ref{1/2} for  3-folds with $\delta(V)\geq 3$. We omit them since we are not sure whether they are optimal or not.
\end{rem}

In the last part we study projective 4-folds. The main result is
the following:

\begin{thm}(=Theorem \ref{b4})\label{4folds} Let $V$ be a nonsingular projective 4-fold of general type. Then,
\begin{itemize}
\item[(i)] when $p_g(V)\geq 2$, $\Phi_{|mK_V|}$ is birational for all $m\geq 35$;
\item[(ii)] when $p_g(V)\geq 19$, $\Phi_{|mK_V|}$ is birational for all $m\geq 18$.
\end{itemize}
 \end{thm}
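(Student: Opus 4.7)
The plan is to reduce birationality of $\Phi_{|mK_W|}$ on $W$ to a pluricanonical birationality problem on a general 3-fold section (or fiber of the canonical map), and then invoke the explicit bounds for 3-folds established earlier in the paper. After a smooth birational modification $\pi: W' \to W$, one may assume that the movable part $|M|$ of $|K_{W'}|$ is base-point free, inducing a morphism $\varphi = \Phi_{|M|}: W' \to \Sigma \subseteq \bP^{p_g(W)-1}$. I would split the argument by $d := \dim \Sigma$. When $d = 1$, $\Sigma$ is a rational normal curve of degree $p_g(W)-1$, $M \sim_\bQ (p_g(W)-1) F$ with $F$ a general fiber of $\varphi$, and $F$ is a smooth projective 3-fold of general type (since $K_W$ is big); because $F$ is algebraically equivalent to nearby fibers one has $F|_F \equiv 0$, so $K_F = K_{W'}|_F$ by adjunction. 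When $d \geq 2$, $M|_F$ provides strictly more positivity on a general fiber, which simplifies the analysis.

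The pencil case $d = 1$ is the most delicate. Kawamata-Viehweg vanishing applied to $(m-1)K_{W'}$ yields a restriction surjection $H^0(W', mK_{W'}) \twoheadrightarrow H^0(F, mK_F)$, and since $|M|$ separates distinct general fibers of $\varphi$, the birationality of $\Phi_{|mK_{W'}|}$ reduces to that of $\Phi_{|mK_F|}$ on the 3-fold $F$. For part (i), Theorem \ref{M2}(1) applied directly on $F$ would yield only $m \geq 62$; to improve this to the claimed $m \geq 35$ I would exploit the extra positivity $K_{W'} - F \sim_\bQ (p_g(W)-2) F + (\text{fixed part})$ via a fractional Kawamata-Viehweg argument, showing that $|mK_{W'}||_F$ in fact contains $|K_F + L|$ for a nef and big $\bQ$-divisor $L$ with sufficient fractional positivity, and combining this with Corollary \ref{pg1} (the bound $m \geq 18$ for 3-folds with $p_g \geq 1$) together with the classification by ps-index from Theorem \ref{M1}. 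For part (ii), the hypothesis $p_g(W) \geq 19$ provides enough additional pluri-genera on a general member $F \in |K_W|$ (via the relation $P_2(W) = p_g(W) + p_g(F)$ coming from the restriction sequence and Kodaira vanishing) to ensure $p_g(F) \geq 2$, and the sharper 3-fold bound for $p_g \geq 2$ from \cite{IJM} then yields the claimed $m \geq 18$.

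The main obstacle is obtaining the sharp bound $m \geq 35$ in part (i). Halving the naive bound $m \geq 62$ requires a careful control of the fractional parts in the Kawamata-Viehweg vanishing on the 3-fold fiber, combined with a finite case check over the possible ps-indices $\delta(F)$, exploiting Theorem \ref{M1} to rule out or handle the extremal invariants $(B_F, P_2(F), \chi(\OO_F))$ that might a priori obstruct the sharp bound. A secondary, more routine obstacle is the case $d \geq 2$, where one must arrange that a further restriction (to a general hyperplane section of $\Sigma$ pulled back to $W'$, say) lands on a smooth subvariety with controllable invariants and no worse a bound.
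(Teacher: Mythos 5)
Your skeleton matches the paper's: make the movable part of $|K_{W'}|$ free, split on $d=\dim\Sigma$, and in the pencil case restrict to a general $3$-fold fiber $F$ via Kawamata--Viehweg vanishing and the canonical restriction inequality (Lemma \ref{cr}). But there are two genuine gaps. First, after the vanishing step what lands on $F$ is \emph{not} a pluricanonical system: there is no surjection onto $H^0(F,mK_F)$ as you assert; one only obtains $|K_F+\roundup{Q}|$ for a $\bQ$-divisor $Q$ that is merely \emph{numerically} equivalent to $\lambda\nu^*(K_{F_0})$ with $\lambda$ roughly $\frac{m\theta}{\theta+1}-1$, where $\theta\geq p_g(W)-1$. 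Consequently Corollary \ref{pg1}(2), which concerns the linear systems $|mK_F|$ themselves, cannot be invoked; what is needed is the twisted statement that $|K_{\tilde X}+\roundup{Q_\lambda}|$ is birational for \emph{every} $\bQ$-divisor $Q_\lambda\equiv\lambda\nu^*(K_X)$ with $\lambda>16$ on a minimal $3$-fold with $p_g>0$. Proving this (the paper's Theorem \ref{pg>0}) is the real content of the section: it requires rerunning the $p_g\geq 1$ analysis (the cases $P_4\geq 2$ and $P_4=1$, $P_5\geq 3$) for adjoint systems, and it is exactly where the threshold $35$ comes from, since with $\theta\geq 1$ one needs $\frac{m}{2}-1>16$, i.e.\ $m\geq 35$. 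Your proposed finite case check over $\delta(F)$ using Theorem \ref{M1} is off target: $p_g(F)>0$ forces $\delta(F)=1$, so the classification for $\delta\geq 13$ never enters.

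Second, your mechanism for part (ii) does not work. The identity $P_2(W)=p_g(W)+p_g(F)$ has no justification (in the pencil case $F$ is a fiber, not a member of $|K_W|$, and $p_g(F)=1$ is perfectly possible however large $p_g(W)$ is), and $p_g(F)\geq 2$ is neither implied nor needed. What $p_g(W)\geq 19$ actually buys is purely numerical: $\theta\geq p_g(W)-1\geq 18$, so the canonical restriction inequality gives $\pi^*(K_W)|_F\geq\frac{18}{19}\nu^*(K_{F_0})$ and hence $\lambda\geq\frac{18m}{19}-1>16$ already for $m\geq 18$; the same Theorem \ref{pg>0} then concludes. The case $d\geq 2$ is handled roughly as you suggest (restrict to a general member of the movable part and then to a surface, using Kawamata's extension theorem twice together with Lemma \ref{masek}), and in fact yields the better bound $m\geq 15$ there.
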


\medskip

This paper is organized as follows.  In Section 2, we start with
general setting on rational maps on  varieties of general type
and review some known useful inequalities. Then we list several
basic lemmas on 3-folds. In Section 3, we  improve our technique
used in \cite{Explicit2} to bound $K_X^3$ from below. Applying our
basket analysis developed in \cite{Explicit1}, we obtain an
effective function $v(x)$ in Section 4 so that $K_X^3\geq
v(\delta(X))$ for any given minimal 3-fold $X$. Section 5 is
devoted to compiling the clean list for ${\mathbb B}(X)$ with
$\delta(X)\geq 13$. Then, in Section 6, we are able to study the
birationality of $\Phi_m$. Section 7 is dedicated to classifying
3-folds with $\delta=2$. {}Finally we study nonsingular
projective 4-folds of general type with $p_g\geq 2$ in Section 8. All
subsidiary tables are presented in the Appendix.

\bigskip

Throughout we work over any algebraically closed field $k$ of characteristic 0. 
We are in favor of the following symbols:
\begin{itemize}
\item[$\circ$] ``$\sim$'' denotes linear equivalence or
${\bQ}$-linear equivalence; \item[$\circ$] ``$\equiv$'' denotes
numerical equivalence; \item[$\circ$] ``$|A|\ \lsleq\ |B|$'' means
that $|B|\supseteq |A|+\text{fixed effective divisors}$.
\end{itemize}

\section{\bf Preliminaries}

We begin with the  general setting on  rational maps defined
by some sub-linear system of the pluricanonical system $|mK|$ on
varieties of general type.  Let $V$ be any nonsingular projective
variety of general type with dimension $n\geq 3$.  According to
the Minimal Model Program,   $V$ has a minimal model (see e.g. \cite{KMM}, \cite{K-M},
\cite{BCHM} and \cite{Siu}). {}From the point of view of
birational geometry, we may always consider the rational map on
minimal varieties of general type. A minimal model
$X$ is a normal projective variety with a nef
canonical divisor $K_{X}$ and with $\bQ$-factorial terminal
singularities.

\subsection{The rational map $\Phi_{\Lambda}$ for $\Lambda\subset |m_0K|$}\label{setup}

Let $X$ be a minimal projective variety of general type on which $P_{m_0}(X)\geq 2$ for a positive integer $m_0$.  Let $\Lambda\subset |m_0K_X|$ be a positive dimensional linear system.
{}Fix an effective Weil
divisor $K_{m_0}\sim m_0K_X$ on $X$. Take successive blow-ups
$\pi\colon X'\rightarrow X$ along nonsingular centers, such that the
following conditions are satisfied:
\medskip
\begin{itemize}
\item[(i)] $X'$ is smooth;


\item[(ii)] the moving part  of $\pi^*(\Lambda)$ is base point
free and so that $g:=\Phi_{\Lambda}\circ\pi$ is a non-constant
morphism;

\item[(iii)] $\pi^*(K_{m_0})
\cup\{\pi-\text{exceptional divisors} \}$ has simple normal crossing
supports.

\end{itemize}

Sometimes we will take further blow-ups so that $\pi$ satisfies
some more conditions, which will be specified explicitly.

We have a morphism $g\colon X'\longrightarrow \overline{\Phi_{\Lambda}(X)}\subseteq{\mathbb
P}^{N}$. Let $X'\overset{f}\longrightarrow
\Gamma\overset{s}\longrightarrow \overline{\Phi_{\Lambda}(X)}$ be the Stein factorization of $g$.
We have the following commutative diagram:

\begin{eqnarray*}
\xymatrix{ X' \ar[rr]^{f}\ar[d]_{\pi }\ar[drr]^{g}&& \Gamma \ar[d]^{s}\\
X \ar@{.>}[rr]_{\Phi_\Lambda} && \overline{\Phi_{\Lambda}(X)}}
\end{eqnarray*}



We may write $m_0K_{X'}=_{\mathbb Q}\pi^*(m_0K_X)+E_{\pi, m_0}$
where $E_{\pi, m_0}$ is an effective $\pi$-exceptional ${\mathbb
Q}$-divisor. Denote by $M_{m_0}$ (resp. $M_\Lambda$) the movable
part of $|m_0K_{X'}|$ (resp. $\pi^* \Lambda$).  Set $d_{m_0}:=
\dim \Phi_{m_0}(X)$ (resp. $d_{\Lambda}:=\dim \Gamma$). The
Bertini Theorem implies that the general member of the moving part
$M_{\Lambda}$ of $\pi^*(\Lambda)$ is irreducible whenever
$d_{\Lambda}\geq 2$ and, otherwise, $M_{\Lambda}\equiv
a_{\Lambda}F$, where $a_{\Lambda}:=\deg f_*\OO_{X'}(M_{\Lambda})$
and $F$ is a general fiber of $f$. We set
$$\theta_\Lambda:=\begin{cases}
1,&\ \text{if}\ d_{\Lambda}\geq 2;\\
a_{\Lambda},&\ \text{if}\ d_{\Lambda}=1.
\end{cases}$$
Recall our definition in \cite[Definition 2.4]{Explicit2}, the {\it generic irreducible element} $\Sigma$ of $\pi^*(\Lambda)$ is defined as follows:
$$\Sigma_\Lambda:=\begin{cases}
\text{the general member of the moving part of}\ \pi^*(\Lambda) , &\ \text{if } d_{\Lambda}\geq 2;\\
F, &\ \text{if} \  d_{\Lambda}=1.
\end{cases}$$
By the above setting, we always have
$$m_0\pi^*(K_X)\sim_{\bQ} \theta_\Lambda \Sigma_\Lambda+E_{\Lambda}'$$
for some effective $\bQ$-divisor $E_{\Lambda}'$ on $X'$.
\medskip

\noindent {\bf Convention.} Whenever we are working on the complete
linear system $|m_0 K_X|$, we will use parallel notations such as $d_{m_0}$,
$\ttm$, $\cdots$ (or even just $d, \theta,\cdots$, for simplicity).
\medskip

We discuss the special case with $d_{\Lambda}=1$. Clearly the
general fiber $F$ is nonsingular projective of dimension
$\dim(X)-1$. Replace $X'$ by its birational model, we may assume
that there is a birational contraction morphism $\sigma:F\lrw F_0$
onto a minimal model $F_0$.  We have the following ``canonical
restriction inequality'':

\begin{lem}\label{cr} Keep the above settings. Suppose that $d_{\Lambda}=1$.  The following holds:
\begin{itemize}
\item[(i)] if $b:=g(\Gamma)>0$, then $\pi^*(K_X)|_F\sim \sigma^*(K_{F_0})$;
\item[(ii)] if $b=0$, then
$$\pi^*(K_X)|_F\geq \frac{ \tl }{m_0+ \tl}\sigma^*(K_{F_0}).$$
\end{itemize}
\end{lem}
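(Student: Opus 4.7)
The plan is to combine adjunction on the general fiber $F$ with the effective decomposition of $m_0 \pi^*(K_Z)$ provided by the linear system $\Lambda$. First I refine $\pi$ so that $F$ is smooth and the normal bundle $\mathcal{O}_{Z'}(F)|_F$ is trivial, which by adjunction gives $K_{Z'}|_F \sim K_F$. Together with the resolution relation $K_{Z'} = \pi^*(K_Z) + E_\pi$ (with $E_\pi$ effective and $\pi$-exceptional) and the minimal-model relation $K_F = \sigma^*(K_{F_0}) + E_\sigma$ (with $E_\sigma$ effective and $\sigma$-exceptional), this yields the base identity
$$\pi^*(K_Z)|_F \sim_{\bQ} \sigma^*(K_{F_0}) + E_\sigma - E_\pi|_F.$$

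Next I examine the movable part of $\pi^*\Lambda$. Since $\pi^*\Lambda$ factors through $f : Z' \to \Gamma$, we have $M_\Lambda \sim f^*(D_\Gamma)$ for an effective divisor $D_\Gamma$ on $\Gamma$ of degree $\theta_\Lambda = a_\Lambda$, and $\pi^*(m_0 K_Z) \sim f^*(D_\Gamma) + V_\Lambda$ with $V_\Lambda$ effective. When $b = 0$, $D_\Gamma \sim \theta_\Lambda \cdot [\mathrm{pt}]$ on $\bP^1$ forces $M_\Lambda \sim \theta_\Lambda F$ linearly; when $b > 0$, a general fiber $F = f^{-1}(p)$ satisfies $f^*(D_\Gamma)|_F = 0$ since $D_\Gamma$ is supported away from $p$.

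For case (ii), I would compute
$$(m_0 + \theta_\Lambda)\pi^*(K_Z) \sim_{\bQ} \theta_\Lambda F + V_\Lambda + \theta_\Lambda K_{Z'} - \theta_\Lambda E_\pi,$$
then restrict to $F$ (using $F|_F \equiv 0$ and $K_{Z'}|_F \sim K_F$) to obtain
$$(m_0 + \theta_\Lambda)\pi^*(K_Z)|_F \sim_{\bQ} \theta_\Lambda \sigma^*(K_{F_0}) + \bigl(\theta_\Lambda E_\sigma + V_\Lambda|_F - \theta_\Lambda E_\pi|_F\bigr).$$
Dividing by $m_0 + \theta_\Lambda$ produces the asserted bound provided the bracketed term is effective; I would secure this by choosing $\pi$ compatibly with $\sigma$, so that the $\pi$-exceptional contribution to $F$ is absorbed by $\theta_\Lambda E_\sigma + V_\Lambda|_F$.

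For case (i), the vanishing $f^*(D_\Gamma)|_F = 0$ gives $m_0 \pi^*(K_Z)|_F \sim V_\Lambda|_F$; combining with the base identity one sees that $V_\Lambda|_F$ is linearly equivalent to $m_0 \sigma^*(K_{F_0}) + m_0(E_\sigma - E_\pi|_F)$. Using the nefness of both $\sigma^*(K_{F_0})$ and $\pi^*(K_Z)|_F$ together with the positivity $\deg K_\Gamma \geq 0$ (which rigidifies the relative canonical structure over the positive-genus base), I would argue that $E_\sigma$ and $E_\pi|_F$ coincide up to linear equivalence, yielding the sharper statement $\pi^*(K_Z)|_F \sim \sigma^*(K_{F_0})$. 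The main obstacle lies in simultaneously controlling the two distinct exceptional divisors $E_\pi|_F$ and $E_\sigma$: in case (ii) only an effective inequality needs to be arranged via a compatible choice of resolution, whereas in case (i) establishing the exact linear equivalence requires the full rigidity coming from $\Gamma$ having positive genus.
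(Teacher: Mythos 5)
Your proposal has a genuine gap at its central step, and the gap is precisely where the paper invokes a deep theorem. After restricting $(m_0+\theta_\Lambda)\pi^*(K_Z)\sim_{\bQ}\theta_\Lambda(K_{Z'}+F)-\theta_\Lambda E_\pi+E'_\Lambda$ to $F$, you need the correction term $\theta_\Lambda E_\sigma+E'_\Lambda|_F-\theta_\Lambda E_\pi|_F$ to be effective, and you propose to ``secure this by choosing $\pi$ compatibly with $\sigma$.'' This cannot be arranged: $\sigma\colon F\to F_0$ is the contraction of the $(-1)$-curves of the abstract surface $F$ to its minimal model, while $E_\pi|_F$ consists of the traces on $F$ of the exceptional divisors of the threefold resolution $\pi$ (e.g.\ blow-ups along the base locus of $\Lambda$, which every fiber meets). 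These curves on $F$ have no reason to be $\sigma$-exceptional or to be dominated by $\theta_\Lambda E_\sigma+E'_\Lambda|_F$, and no choice of further blow-up improves matters -- further blow-ups only enlarge $E_\pi|_F$. If this purely divisor-theoretic manipulation worked, the lemma would be a formal consequence of adjunction; it is not. The paper's proof of (ii) instead applies Kawamata's extension theorem to get the surjection $H^0(Z',m\theta_\Lambda(K_{Z'}+F))\to H^0(F,m\theta_\Lambda K_F)$, so that the movable part $|m\theta_\Lambda\sigma^*(K_{F_0})|$ of $|m\theta_\Lambda K_F|$ is cut out by $|M_{m(\theta_\Lambda+m_0)}|$ on $F$, giving $m(\theta_\Lambda+m_0)\pi^*(K_Z)|_F\geq m\theta_\Lambda\sigma^*(K_{F_0})$. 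That global-sections input is the whole content of the lemma and is absent from your argument.

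Part (i) is likewise not established by your sketch: the assertion that positivity of $g(\Gamma)$ ``rigidifies the relative canonical structure'' so that $E_\sigma$ and $E_\pi|_F$ ``coincide up to linear equivalence'' is a restatement of the desired conclusion, not a proof. The paper simply quotes this as a separate known result (Chen, [ASPM, Lemma 2.5]), whose proof again rests on nontrivial positivity over a positive-genus base rather than on matching exceptional divisors. Your preliminary reductions (triviality of $\OO_{Z'}(F)|_F$, the decomposition $m_0\pi^*(K_Z)\sim_{\bQ}\theta_\Lambda F+E'_\Lambda$ when $b=0$) are fine, but the proof as proposed does not go through.
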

\begin{proof} Statement (i) follows from Chen \cite[Lemma 2.5]{ASPM}.

Assume $\Gamma\cong \bP^1$.   Choose a sufficiently large and
divisible integer $m$ so that both $|m\pi^*(K_X)|$ and
$|mK_{F_0}|$ are base point free. By Kawamata's extension theorem
\cite[Theorem A]{EXT}, we have the surjective map:
$$H^0(X', m \tl (K_{X'}+F))\lrw H^0(F, m \tl K_F).$$
Since $|m(\tl +m_0)K_{X'}|\lsgeq |m \tl (K_{X'}+F)|$,
$\text{Mov}|m \tl K_F|=|m \tl \sigma^*(K_{F_0})|$ and
$|m(\tl+m_0)\pi^*(K_X)|=|M_{m( \tl+m_0)}|$, we obtain the
following inequality:
$$m(\tl+m_0)\pi^*(K_X)|_F= M_{m(\tl+m_0)}|_F\geq m \tl \sigma^*(K_{F_0}),$$
which implies (ii).
\end{proof}

\subsection{Key inequalities on 3-folds}\label{3fold}  Let $X$ be minimal 3-fold of general type. Assume that $\Lambda\subset
|m_0K_X|$ is a linear system of positive dimension.
As in \ref{setup}, we obtain an induced fibration $f:X'\lrw
\Gamma$. Pick a generic irreducible element $S$ of $|m_0K_{X'}|$.
Let $|G|$ be a given base point free linear system on $S$. Pick a
generic irreducible element $C$ of $|G|$. Since $\pi^*(K_X)|_S$ is
nef and big, Kodaira's lemma implies that $\pi^*(K_X)|_S\geq \beta
C$ for some rational number $\beta>0$. Then, by \cite[Inequality
(2.1)]{Explicit2}, one has
\begin{equation}\label{i1}
K_X^3\geq \frac{ \theta \beta}{m_0}\xi
\end{equation}
where $\xi:=(\pi^*(K_X)\cdot C)_{X'}$. Besides, by \cite[Remark 2.12]{Explicit2}, one has
\begin{equation}\label{i2}
\xi\geq \frac{\deg(K_C)}{1+\frac{m_0}{\theta}+\frac{1}{\beta}}.
\end{equation}
For any positive integer $m$ so that
$\alpha_m:=(m-1-\frac{m_0}{\theta}-\frac{1}{\beta})\xi>1$, by
Chen--Zuo \cite[Theorem 3.1]{Chen-Zuo}, one has
\begin{equation}\label{i3}
\xi\geq \frac{\deg(K_C)+\roundup{\alpha_m}}{m}.
\end{equation}

We have a stronger form of Inequality (\ref{i3}) when $C$ is
``even'':

\begin{lem}\label{eveni} Under the above situation, if $C$ is an even divisor on $S$ (i.e. $\frac{1}{2}C\in \text{Pic}(S)$),  then, for any $m>0$ so that $\alpha_m>0$,  one has
\begin{equation}\label{i3'}
\xi\geq \frac{\deg(K_C)+2\roundup{\frac{1}{2}\alpha_m}}{m}.
\end{equation}
\end{lem}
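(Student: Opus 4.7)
The plan is to imitate the derivation of inequality \eqref{i3} while exploiting the hypothesis $\tfrac12 C \in \Pic(S)$ to force the auxiliary effective divisor produced on $C$ to have even degree.

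First I would recall the structure of the argument behind \eqref{i3} (Chen--Zuo). Starting from $m_0\pi^*(K_X)\sim_{\bQ}\theta S+E$ and $\pi^*(K_X)|_S\geq \beta C$, one decomposes
$$
m\pi^*(K_X)|_S \;\sim_{\bQ}\; K_S + C + L,
$$
where $L$ is a nef and big $\bQ$-divisor on $S$ with $L\cdot C\geq \alpha_m$. Kawamata--Viehweg vanishing for $K_S+C+\lceil L\rceil$ together with adjunction along $C$ produces
$$
M_m|_C \;\lsgeq\; K_C + \lceil L\rceil|_C,
$$
and combining $\deg M_m|_C\leq m\xi$ with $\deg(\lceil L\rceil|_C)\geq \roundup{\alpha_m}$ gives \eqref{i3}. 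The extra unit of positivity $\alpha_m>1$ is used to guarantee $\roundup{\alpha_m}\geq 2$, i.e.\ non-triviality of the integer divisor produced on $C$.

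Under the hypothesis $C\sim 2C_0$ with $C_0\in\Pic(S)$, one has $\deg(C_0|_C)=C_0\cdot C=\tfrac12 C^2\in\ZZ$ and hence $\deg(C|_C)=C^2$ is even. I would re-run the argument with the refined decomposition
$$
m\pi^*(K_X)|_S \;\sim_{\bQ}\; K_S + 2C_0 + L,
$$
absorbing the $C_0$-component of the fractional part of $L$ into the round-up; this produces an effective integral divisor $D$ on $C$ whose class in $\Pic(C)$ is an integer multiple of $C_0|_C$ (up to standard adjunction corrections), so that $\deg D$ is automatically even. Together with $\deg D\geq \alpha_m$ this forces $\deg D\geq 2\roundup{\alpha_m/2}$, and plugging into $m\xi\geq \deg K_C+\deg D$ yields \eqref{i3'}. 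The weaker threshold $\alpha_m>0$ suffices here because $2\roundup{\alpha_m/2}\geq 2$ already whenever $\alpha_m$ is positive, eliminating any concern about triviality of $D$.

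The main obstacle is to carry out the rounding step cleanly, i.e.\ to verify that the integer divisor on $S$ needed for Kawamata--Viehweg vanishing can be chosen so that its restriction to $C$ lies in a class divisible by $C_0|_C$ in $\Pic(C)$. This is purely bookkeeping with the fractional parts of $L$: since $C=2C_0$ is Cartier, any modification of $\lceil L\rceil$ by $C_0$ remains an integer divisor, and the residual nef-and-big property needed for vanishing is preserved when $\alpha_m>0$. Once this verification is in place, the parity conclusion is automatic and \eqref{i3'} follows.
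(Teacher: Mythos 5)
Your overall strategy is the right one, and you correctly identify the two points that matter: that the auxiliary divisor $D$ on $C$ produced by the Chen--Zuo argument should have even degree, and that evenness upgrades $\deg D\geq\alpha_m>0$ to $\deg D\geq 2\roundup{\tfrac12\alpha_m}\geq 2$, which is why the threshold can be relaxed from $\alpha_m>1$ to $\alpha_m>0$. But the mechanism you propose for proving evenness has a genuine gap. You try to force $D$ to lie in a class in $\Pic(C)$ that is an integer multiple of $C_0|_C$ by rewriting the decomposition as $K_S+2C_0+L$ and ``absorbing the $C_0$-component of the fractional part of $L$ into the round-up,'' and you defer the verification of this as ``purely bookkeeping.'' It is not: the divisor $\roundup{L}$ is an essentially arbitrary integral divisor on $S$, and there is no reason its restriction to $C$ should be linearly equivalent on $C$ to a multiple of $C_0|_C$; the fractional part of $L$ need not be supported on $C_0$ at all, so the absorption step cannot in general be carried out. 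As written, the proof rests on an unverified claim that would typically fail.

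Fortunately the claim is also unnecessary, and this is exactly where the paper's proof is simpler. Parity of $\deg D$ is not a statement about the class of $D$ in $\Pic(C)$ but about an intersection number computed on $S$: with $D=\roundup{Q}|_C$ for $Q$ the relevant $\bQ$-divisor on $S$, one has
$$\deg(D)=(\roundup{Q}\cdot C)=2\bigl(\roundup{Q}\cdot \tfrac12 C\bigr),$$
and $(\roundup{Q}\cdot\tfrac12 C)$ is an integer simply because $\roundup{Q}$ and $\tfrac12 C$ are both integral divisors on the smooth surface $S$. No modification of the decomposition is needed, and the rest of your argument (positivity of $\deg D$ from $\alpha_m>0$, hence $\deg D\geq 2\roundup{\tfrac12\alpha_m}$, then feeding this into $m\xi\geq\deg K_C+\deg D$) then goes through exactly as you describe and coincides with the paper's proof. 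So the gap is localized but real: replace your $\Pic(C)$-divisibility argument with the direct intersection-theoretic parity observation and the proof is complete.
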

\begin{proof}  We refer to the proof for Chen--Zuo \cite[Theorem 3.1]{Chen-Zuo}.  The key point is to estimate $\deg(D)$ where $D=\roundup{Q}|_C$ and $Q$ is a $\bQ$-divisor on $S$ with $(Q\cdot C)=\alpha_m$.
Since $\deg(D)\geq \alpha_m>0$ and $\deg(D)$ is even, we naturally have
$$\deg(D)=2(\roundup{Q}\cdot \frac{1}{2}C)\geq 2\roundup{\frac{1}{2}\alpha_m}$$
where we note that  $(\roundup{Q}\cdot \frac{1}{2}C)$ is a positive integer. Clearly the rest of the proof of Chen--Zuo \cite[Theorem 3.1]{Chen-Zuo} implies Inequality (\ref{i3'}).
\end{proof}

When $d_{\Lambda}=1$, Lemma \ref{cr}(ii) implies the following:
\begin{equation} \xi=(\pi^*(K_X)\cdot C)_{X'}\geq \frac{\theta}{m_0+\theta}(\sigma^*(K_{F_0})\cdot C)_F.
\end{equation}

\subsection{Other useful Lemmas}
\begin{lem}(see Ma\c{s}ek \cite[Proposition 4]{Masek} or \cite[Lemma 2.6]{2011NG2})\label{masek} Let $S$ be a nonsingular projective surface. Let $L$ be a nef and big $\bQ$-divisor on $S$ satisfying the following conditions:
\begin{itemize}
\item[(1)] $L^2>8$. \item[(2)] $(L\cdot C_{x})\geq 4$ for all
irreducible curves $C_{x}$ passing through any very general point
$x \in S$.
\end{itemize}
Then the linear system $|K_S+\roundup{L}|$ separates two distinct points in very general positions. Consequently, $|K_S+\roundup{L}|$ gives a birational map.
\end{lem}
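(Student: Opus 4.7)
My plan is to follow the Reider--Mašek strategy via multiplier ideals and Nadel vanishing. Fix two very general points $x_1,x_2\in S$; the goal is to produce a surjection
$$H^0(S,K_S+\roundup{L})\twoheadrightarrow k(x_1)\oplus k(x_2),$$
which simultaneously yields point separation and (via a parallel argument applied to an infinitely near point) birationality.

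First I would use hypothesis (1) to construct a singular auxiliary $\bQ$-divisor. Choose a rational $\lambda<1$ with $\lambda^2L^2>8$. Since $L$ is big, $h^0(S,\rounddown{m\lambda L})\sim \tfrac{\lambda^2L^2}{2}m^2$ asymptotically, whereas imposing vanishing of order $\geq 2m$ at each of two fixed points costs only about $2\cdot \tfrac{(2m)^2}{2}=4m^2$ conditions. The inequality $\lambda^2L^2>8$ therefore yields, for $m$ sufficiently divisible, an effective $\bQ$-divisor $D\sim_{\bQ}\lambda L$ with $\text{mult}_{x_i}(D)\geq 2$ for $i=1,2$. A standard tie-breaking step (perturbing $D$ by small general members of $|k(1-\lambda)L|/k$) then produces $c_0\leq 1$ such that the pair $(S,c_0 D)$ is log canonical but not Kawamata log terminal, with minimal non-klt locus $Z$ meeting $\{x_1,x_2\}$.

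Next I would analyze $Z$. Since $S$ is a surface, each local component of $Z$ near a relevant point is either that point itself or a reduced irreducible curve $C$ passing through it. Suppose the curve case occurs, with $C\ni x_1$. Because $x_1$ is very general, hypothesis (2) applies and gives $(L\cdot C)\geq 4$; on the other hand, since $\text{mult}_C(c_0 D)\geq 1$, the residual divisor $D-c_0^{-1}C$ is effective, so $(L\cdot C)\leq c_0(L\cdot D)=c_0\lambda L^2$, and combining this with the multiplicity $\text{mult}_{x_1}(D)\geq 2$ via the adjunction/intersection inequality on the blow-up at $x_1$ contradicts the numerical bound $L^2>8$ together with $c_0\lambda<1$. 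Ruling out the curve case, $\mathcal{J}(S,c_0D)$ is cosupported at finitely many points including $\{x_1,x_2\}$ (after symmetrizing the construction in $x_1,x_2$), and since $L-c_0D\equiv (1-c_0\lambda)L$ is nef and big, Kawamata--Viehweg/Nadel vanishing gives
$$H^1\bigl(S,\mathcal{J}(S,c_0D)\otimes\mathcal{O}_S(K_S+\roundup{L})\bigr)=0,$$
which yields the desired surjection onto $k(x_1)\oplus k(x_2)$.

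The main obstacle is the curve case in the analysis of $Z$. A pointwise intersection assumption like Reider's $(L\cdot C_x)\geq 2$ would not suffice in the $\bQ$-divisor setting, because passing from $L$ to $\roundup{L}$ loses a summand one must absorb into the multiplier ideal argument; the stronger bound $(L\cdot C_x)\geq 4$ in hypothesis (2) is precisely what allows the positive-dimensional non-klt scenarios to be excluded numerically. Making that elimination rigorous, together with the tie-breaking that simultaneously localizes $Z$ at both $x_1$ and $x_2$, constitutes the crux of Mašek's refinement of Reider's theorem.
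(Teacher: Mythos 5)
The paper offers no proof of this lemma: it is quoted from Ma\c{s}ek \cite{Masek}, whose argument is of Reider type (Kawamata--Viehweg vanishing plus Bogomolov instability of a rank-two bundle, with the Hodge index theorem pinning down the numerics of the obstructing curve). Your multiplier-ideal/Nadel route is a legitimate alternative strategy in principle, and your opening step is sound: since $\lambda^2L^2>8$ for suitable $\lambda<1$, a parameter count does produce $D\sim_{\bQ}\lambda L$ with multiplicity $\geq 2$ at the two prescribed points, so $(S,D)$ is non-klt there, and tie-breaking is standard.

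The genuine gap is your elimination of a one-dimensional non-klt locus, which is precisely where hypothesis (2) has to do its work. The inequality you derive, $(L\cdot C)\leq c_0(L\cdot D)=c_0\lambda L^2$, is an upper bound that is perfectly compatible with $L^2>8$, $c_0\lambda<1$ and $(L\cdot C)\geq 4$ simultaneously; there is no contradiction, and the appeal to an unspecified ``adjunction/intersection inequality on the blow-up at $x_1$'' does not produce one. In fact a curve component $C$ of the non-klt locus cannot be ruled out by this kind of numerics alone: it must be handled either (a) by restricting to $C$ --- using Nadel vanishing to surject onto sections of a line bundle on $C$ of degree controlled from below by $(L\cdot C)\geq 4$, which then separates the relevant points on $C$ (the Ein--Lazarsfeld induction on the dimension of the minimal center) --- or (b) as Ma\c{s}ek does, by extracting from the failure of separation an effective divisor $C$ through the two points subject to Reider-type constraints (an instability inequality of the form $C^2\geq (L\cdot C)-2$ against the Hodge index bound $C^2L^2\leq (L\cdot C)^2$), which force $(L\cdot C)\leq 3$ up to a boundary case killed by the strict inequality $L^2>8$, contradicting (2). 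You correctly identify this as the crux, but the one concrete argument you offer for it is invalid, so the proof is incomplete at its decisive step. A secondary omission: vanishing for $K_S+\roundup{L}$ must be applied to the multiplier ideal of $\{L\}+c_0D$, where $\{L\}=\roundup{L}-L$, not of $c_0D$ alone; the fractional part has to be absorbed (e.g.\ after arranging simple normal crossing support), and your write-up ignores it.
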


\begin{lem}\label{>1} Let $\sigma:S\longrightarrow S_0$ be a birational contraction from a nonsingular projective surface $S$ of general type onto the minimal model $S_0$. Assume that $(K_{S_0}^2, p_g(S_0))\neq (1,2)$ and that $C$ is a moving  curve on $S$. Then $(\sigma^*(K_{S_0})\cdot C)\geq 2$.
\end{lem}
\begin{proof} When $K_{S_0}^2\geq 2$, this is due to Hodge index theorem. When $(K_{S_0}^2, p_g(S_0))=(1,0)$, this is due to Miyaoka \cite[Lemma 5]{Mi}. When $(K_{S_0}^2,
p_g(S_0))=(1,1)$, $(\sigma^*(K_{S_0})\cdot C)=1$ implies
$K_{S_0}\equiv \sigma_* C$ by Hodge index theorem.  According to
Bombieri \cite{Bom}, we know that $S_0$ is simply connected. Thus
$K_{S_0}\sim \sigma_*C$, which is impossible since $|K_{S_0}|$ is
not movable.
\end{proof}

\begin{lem}\label{verygeneral} Let $\sigma:S\longrightarrow S_0$ be the birational contraction onto the minimal model $S_0$ from a nonsingular projective surface $S$ of general type. Assume that $(K_{S_0}^2, p_g(S_0))\neq (1,2)$ and that $\tilde{C}$ is a curve on $S$ passing through very general points. Then $(\sigma^*(K_{S_0})\cdot \tilde{C})\geq 2$.
\end{lem}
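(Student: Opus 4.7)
My plan is to reduce Lemma \ref{verygeneral} directly to Lemma \ref{>1} by showing that any irreducible curve $\tilde{C} \subset S$ passing through very general points is automatically a moving curve. Once that is established, Lemma \ref{>1} applied to $\tilde{C}$ immediately yields $(\sigma^*(K_{S_0}) \cdot \tilde{C}) \geq 2$.

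The standard tool for this reduction is the Hilbert scheme. First, I would parameterize irreducible curves on $S$ by the countably many irreducible components $\{H_i\}_{i \in \mathbb{N}}$ of $\text{Hilb}(S)$. For each $i$, let $\mathcal{C}_i \subset H_i \times S$ be the universal family with projections $p_i \colon \mathcal{C}_i \to H_i$ and $\mathrm{ev}_i \colon \mathcal{C}_i \to S$; set
\[
Z_i := \overline{\mathrm{ev}_i(\mathcal{C}_i)} \subseteq S,
\qquad
T := \bigcup_{\{i\,:\,Z_i \subsetneq S\}} Z_i.
\]
Since $T$ is a countable union of proper closed subvarieties of $S$, no very general point of $S$ lies in $T$.

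Next, because $\tilde{C}$ passes through at least one very general point $x \in S$, and because $[\tilde{C}] \in H_{i_0}$ for some index $i_0$, we have $x \in \tilde{C} \subseteq Z_{i_0}$ while $x \notin T$. This forces $Z_{i_0} = S$, so the family $\mathcal{C}_{i_0} \to H_{i_0}$ dominates $S$ via the evaluation map. Therefore $\tilde{C}$ is a moving curve in the sense required by Lemma \ref{>1}. Applying that lemma under the hypothesis $(K_{S_0}^2, p_g(S_0)) \neq (1,2)$ finishes the proof.

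The argument is entirely formal and the only conceptual step is the countability of the components of $\text{Hilb}(S)$, so I do not foresee any genuine obstacle. The one caveat to flag is that the family $\mathcal{C}_{i_0} \to H_{i_0}$ parameterizes curves on $S$, not on $S_0$, so the intersection number is computed upstairs on $S$; this is consistent with the statement and with Lemma \ref{>1}, where the pullback $\sigma^*(K_{S_0})$ is already used.
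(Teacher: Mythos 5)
Your reduction strategy is genuinely different from the paper's, and the Hilbert-scheme step itself is fine: the countability of the components of $\mathrm{Hilb}(S)$ does show that any curve through a very general point belongs to an irreducible family whose members sweep out $S$. The gap is in the sentence ``Therefore $\tilde{C}$ is a moving curve in the sense required by Lemma \ref{>1}.'' What your argument produces is a curve moving in an \emph{algebraic} family covering $S$; what Lemma \ref{>1} is stated for, and what its proof actually uses, is a curve moving in a \emph{linear} system (everywhere else in the paper Lemma \ref{>1} is applied to a generic irreducible element of a base-point-free $|G|$, and the $(1,1)$ case of its proof concludes ``which is impossible since $|K_{S_0}|$ is not movable'' --- a statement about linear movability). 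These two notions do not coincide in general: the fibers of a fibration over a curve of positive genus form a covering algebraic family whose members satisfy $h^0=1$. So you cannot invoke Lemma \ref{>1} as a black box; you would have to re-examine its proof under your weaker hypothesis. That re-examination is doable (in the $K_{S_0}^2\geq 2$ case one needs $C^2\geq 0$ and $g(C)\geq 2$, which your covering family supplies; in the $(1,1)$ case $q=0$ and $\mathrm{Tor}=0$ let one upgrade algebraic to linear equivalence, forcing every member of the family to be the unique canonical curve, a contradiction; in the $(1,0)$ case one must check that Miyaoka's lemma, or a torsion count, applies), but none of this is in your write-up, and it is precisely where the mathematical content lies.

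For comparison, the paper does not pass through Lemma \ref{>1} at all. It pushes down by the projection formula to a curve $C_0\subset S_0$ through very general points, assumes $(K_{S_0}\cdot C_0)\leq 1$, and uses $g(C_0)\geq 2$ plus adjunction to get $C_0^2\geq 1$; the Hodge index theorem then forces $K_{S_0}^2=1$ and $K_{S_0}\equiv C_0$. Excluding the $(1,2)$ case by hypothesis, it shows that in the $(1,0)$ case the torsion class $K_{S_0}-C_0$ has order at most $5$ and $h^0(C_0)=1$, so only finitely many such $C_0$ exist, while in the $(1,1)$ case $C_0$ is the unique canonical curve; either way $C_0$ cannot pass through a very general point. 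In effect the paper replaces your appeal to Lemma \ref{>1} by a finiteness argument tailored to ``very general,'' and that substitute is exactly what your proposal is missing.
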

\begin{proof} In fact, by the projection formula, this is equivalent to see $(K_{S_0}\cdot C_0)\geq 2$ for any curve $C_0\subset S_0$ passing through very general points of $S_0$.

On the contrary, let us assume $(K_{S_0}\cdot C_0)\leq 1$. Then $g(C_0)\geq 2$ implies $C_0^2\geq 1$. The Hodge index theorem says $K_{S_0}^2=1$ and $K_{S_0}\equiv C_0$. Recall that $S_0$ is not a (1,2) surface. So $S_0$ must be either a $(1,0)$ surface or a $(1,1)$ surface.

If $(K_{S_0}^2, p_g(S_0))=(1,0)$, then $q(S_0)=0$ and the torsion element $\theta:=K_{S_0}-C_0$ is of order $\leq 5$ (see Reid \cite{Geaud}) and $h^0(S_0, C_0)=1$. Thus there are at most finite number of such curves on $S_0$ since $\#\text{Tor}(S_0)\leq 5$, which is absurd by the choice of $C_0$.

If $(K_{S_0}^2, p_g(S_0))=(1,1)$, then $q(S_0)=0$ and $K_{S_0}\sim C_0$ since $\text{Tor}(S_0)=0$ by Bombieri \cite[Theorem 15]{Bom} and thus $C_0$ is the unique canonical curve of $S_0$, which is absurd as well. \end{proof}

\subsection{The birationality principle}


\begin{defn} Pick two different generic irreducible elements $S'$, $S''$ (resp. $C'$, $C''$) in $|M_{m_0}|$ (resp. in $|G|$).
\begin{enumerate}
\item We say that $|mK_{X'}|$ { \it distinguishes $S'$ and $S''$}
if $\Phi_{|mK_{X'}|}(S') \neq \Phi_{|mK_{X'}|}(S'')$.

\item We say that $|mK_{X'}|$ {\it distinguishes $C'$ and $C''$}
if $\Phi_{|mK_{X'}|}(C')\neq \Phi_{|mK_{X'}|}(C'')$.
\end{enumerate}
\end{defn}

We will apply the useful, but technical theorem in Chen-Zuo
\cite{Chen-Zuo} for the birationality of $\Phi_m$.

\begin{thm}\label{birat} (see Chen-Zuo \cite[Theorem 3.1]{Chen-Zuo} or \cite[Theorem 2.11, Part 2]{Explicit2}) Keep the same notations as above. Assume that, for some $m>0$, $|mK_{X'}|$ distinguishes $S'$ and $S''$, $C'$ and $C''$ for generic $S'\neq S''$, $C'\neq C''$. Then $\Phi_m$ is birational under one of the following conditions:
\begin{itemize}
\item[(i)] $\alpha_m>2$; \item[(ii)] $\alpha_m>1$ and $C$ is not
hyper-elliptic.
\end{itemize}
\end{thm}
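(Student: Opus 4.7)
The plan is to follow the two-step restriction paradigm established in \cite{Chen-Zuo} and \cite{Explicit2}: first reduce the birationality of $\Phi_m$ to that of the restriction of $|mK_{X'}|$ to a generic surface $S\in|M_{m_0}|$, then reduce further to the restriction to a generic curve $C\in|G|$ on $S$. The two distinguishing hypotheses in the theorem are precisely what permit these reductions: if two very general points of $X'$ lie on different generic elements $S'\neq S''$, they are separated by hypothesis; similarly on a fixed $S$ they are separated by the hypothesis on $C'$ and $C''$. So it remains only to separate two very general points on a single generic $C$.

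To exhibit a useful sub-linear system on $C$, I would write, following \S\ref{3fold}, $m_0\pi^*(K_X)\sim_{\bQ}\theta_\Lambda S+E'_\Lambda$ and $\pi^*(K_X)|_S\ge\beta C$ (after possibly further blow-ups to make these cleanly effective), and then decompose
$$mK_{X'}\;\sim\;K_{X'}+S+\bigl\lceil(m-1-\tfrac{m_0}{\theta_\Lambda})\pi^*(K_X)-\tfrac{1}{m_0}E_{\pi,m_0}\bigr\rceil.$$
Kawamata--Viehweg vanishing applied to the nef-and-big round-up term yields a surjection $H^0(X',mK_{X'})\twoheadrightarrow H^0(S,mK_{X'}|_S)$. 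A second such decomposition on $S$, using $\pi^*(K_X)|_S\geq\beta C$, gives a surjection onto sections on $C$ and shows that the inherited linear system on $C$ contains $|K_C+D|$ for an integral effective divisor $D$ with $\deg D\geq\lceil\alpha_m\rceil$. This is the mechanism already exploited in Inequality (\ref{i3}).

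The last step is a classical Riemann--Roch computation on the smooth curve $C$. The system $|K_C+D|$ separates two general points $p,q\in C$ as soon as $h^0(C,p+q-D)=0$. Under (i), $\alpha_m>2$ gives $\deg D\geq 3$ and the vanishing is forced by degree. Under (ii), $\deg D\geq 2$, and $h^0(C,p+q-D)\neq 0$ would force $D\sim p+q$; but on a non-hyperelliptic curve a degree-two effective divisor moves in a zero-dimensional linear system, so generic $p,q$ cannot satisfy $p+q\sim D$. In both cases the restricted system separates a generic pair on $C$, and combined with the two distinguishing hypotheses this proves that $\Phi_m$ is birational.

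The main obstacle is the bookkeeping in the middle step: the exceptional contributions from $E_{\pi,m_0}$ and the $\bQ$-divisor $E'_\Lambda$ must be absorbed without eroding the effective degree, and one must produce a birational model $\pi\colon X'\to X$ on which $\pi^*(K_X)|_S-\beta C$ is genuinely nef and big so that Kawamata--Viehweg applies on $S$. Chen--Zuo handle this via a careful application of Kodaira's lemma together with $\pi$-compatible resolutions, so that the clean integral threshold $\alpha_m=(m-1-m_0/\theta_\Lambda-1/\beta)\xi$ emerges as the degree bound on $C$. Once this bookkeeping is in place, the three-step outline above yields the theorem.
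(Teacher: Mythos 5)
The paper states this theorem only with a citation to Chen--Zuo \cite[Theorem 3.1]{Chen-Zuo} and gives no proof of its own, so the comparison is against the cited source; your proposal correctly reconstructs that argument — the two distinguishing hypotheses reduce to separating generic points on a single $C$, the double Kawamata--Viehweg restriction produces $|K_C+D|$ with $\deg D\ge\lceil\alpha_m\rceil$, and the Riemann--Roch computation ($h^0(C,p+q-D)=0$ by degree when $\alpha_m>2$, or by non-hyperellipticity when $\alpha_m>1$) finishes. This is essentially the same approach as the reference, and the bookkeeping you flag (absorbing $E'_\Lambda/\theta_\Lambda$ and the $\pi$-exceptional terms so that the round-up stays nef and big) is exactly what Chen--Zuo carry out.
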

\section{\bf The lower bound of $K^3$ in terms of $m_0$} 

In the study of   3-dimensional explicit birational geometry, a
challenging problem is to determine whether a given weighted basket
${\Bbb B}$ is geometric, i.e. equal to ${\Bbb B_X}$ for some 3-fold $X$ or not. By exploiting geometric
properties, one might be able to have a better estimation of the
lower bound of $K_X^3$,  and hence exclude some non-geometric
formal baskets.
In fact, in \cite[2.19$\sim$2.31]{Explicit2}, we already proved some effective inequalities for $K_X^3$.  
We shall go further along this direction in this section

 Let $X$ be a minimal 3-fold of general type. Assume $P_{m_0}(X)\geq 2$. Mostly we will take $\Lambda=|m_0K_X|$.
Keep the settings in \ref{setup} and \ref{3fold}.

\subsection{}\label{d=3} {\bf The case} $d_{m_0}=3$.\\
 If we take $|G|$ to be $|S|_S|$, then $\beta=\frac{1}{m_0}$. It is known, from \cite[2.19]{Explicit2},  that $\deg(K_C)\geq 6$, $\xi\geq \frac{10}{3m_0+2}$ and $K_X^3\geq \frac{\xi}{m_0^2}$.
 Take $m= 5m_0+4,\cdots, (2t+1)m_0+2t$, successively. Then, by (\ref{i3}),
 one has $\xi \ge  \frac{17}{5m_0+4},  \frac{24}{7m_0+6}$, $\cdots,$ $\frac{7t+3}{(2t+1)m_0+2t}$ respectively.
 Taking the limit, we obtain $\xi \geq \frac{7}{2m_0+2}$. Therefore
\begin{equation}\label{j0} K_X^3 \ge \frac{7}{2m_0^2(m_0+1)}. \end{equation}

 In fact, for each small $m_0$, the explicit lower bound of $K^3$ can be slightly improved by the same trick and here is the result:
\medskip

{\small \centerline{\underline{Table A1}}
\smallskip

\begin{center}
\begin{tabular}{|c|c|c|c|c|c|c|c|}
\hline
 $m_0=$ & 2 & 3 & 4& 5 &6& 7& 8 \\
\hline $\xi\geq $& 4/3 &1&3/4&5/8&1/2
&6/13 & 2/5\\
\hline
$K^3\geq$&1/3 & 1/9& 3/64&1/40 & 1/72&6/637 &1/160\\
\hline\hline
 $m_0=$  & 9 & 10 & 11&12& 13& 14&15\\
\hline $\xi\geq $ &4/11&1/3&3/10&5/18
&1/4 & 6/25 & 2/9\\
\hline
$K^3\geq$ &4/891&1/300&3/1210&5/2592 &1/696 & 3/2450 & 2/2025\\
\hline
\end{tabular}
\end{center}}



\subsection{}\label{d=2} {\bf The case} $d_{m_0}=2$.\\
If we take $|G|=|S|_S|$, then $\beta\geq \frac{P_{m_0}-2}{m_0}$.
By Inequality (\ref{i3}), one has $\xi\geq \frac{2}{2m_0+1}$. Take
$m=3m_0+2, 5m_0+4,\cdots, (2t+1)m_0+2t$ successively. One gets
from Inequality (\ref{i3}) that $\xi \ge \frac{4}{3m_0+2},
\frac{7}{5m_0+4}, \cdots,\frac{3t+1}{(2t+1)m_0+2t}$. Taking the
limit, we have $\xi \geq \frac{3}{2m_0+2}$. By Inequality
(\ref{i1}), we have
\begin{equation}\label{j1}K_X^3 \geq \frac{3(P_{m_0}-2)}{2m_0^2(m_0+1)}\geq\frac{3}{2m_0^2(m_0+1)}.\end{equation}
In fact, we have the following  estimation for each small $m_0$,
which slightly improves \cite[Table A]{Explicit2}:
\medskip

{\small \centerline{\underline{Table A2}}
\smallskip

\begin{center}
\begin{tabular}{|c|c|c|c|c|c|c|c|}
\hline
 $m_0=$ & 2 & 3 & 4& 5 &6& 7 &8\\
\hline $\xi\geq $& 1/2 &2/5&1/3&1/4&2/9
&1/5 &1/6\\
\hline
$K^3\geq$&1/8 & 2/45&1/48 &1/100&1/162&1/245 & 1/384\\
\hline\hline
 $m_0=$  & 9 & 10 & 11&12& 13 & 14 & 15\\
\hline $\xi\geq $&2/13&1/7&1/8&2/17
&1/9 & 1/10 & 2/21\\
\hline
$K^3\geq$ &2/1053&1/700&1/968&1/1224& 1/1521 & 1/1960 & 2/4725\\
\hline
\end{tabular}
\end{center}}
\medskip

Under the same situation, if there exists a number $m_1>0$ such
that $d_{m_1}=3$, then, since $(m_1\pi^*(K_X)|_F\cdot C)\geq 2$,
we have $\xi\geq \frac{2}{m_1}$. Thus Inequality (\ref{i1}) reads:
\begin{equation}\label{j2}
K_X^3\geq \frac{2(P_{m_0}-2)}{m_0^2m_1}\geq \frac{2}{m_0^2m_1}.
\end{equation}

\subsection{}\label{>0} {\bf The case} $d_{m_0} =1$,
$b=g(\Gamma)>0$.\\
We have $S=F$ by definition. Pick a very large number $l>0$. Take
$|G|:=|l\sigma^*(K_{F_0})|$ which is base point free by the
surface theory. By definition, we have $\theta\geq P_{m_0}\geq 2$.
Since $\pi^*(K_X)|_F\sim \sigma^*(K_{F_0})$ by Lemma
\ref{cr}(i), we see $\beta=\frac{1}{l}$ and thus Inequality
(\ref{i1}) implies
\begin{equation}\label{j3}
K_X^3 \geq \frac{P_{m_0}}{m_0}\cdot \frac{1}{l}\cdot lK_{F_0}^2\ge \frac{P_{m_0}}{m_0}.\end{equation}

\subsection{} \label{=0} {\bf The case} $d_{m_0} =1$, $b=0$.\\
By Lemma \ref{cr}(ii), we have
\begin{equation}\label{j4}
K_X^3 \geq \frac{\theta}{m_0}\pi^*(K_X)|_F^2\geq
\frac{\theta^3}{m_0(m_0+\theta)^2}\cdot K_{F_0}^2.\end{equation}

We  will choose suitable linear system $|G|$ on $F$  depending on
the numerical type of $F$.  {}From the surface theory, we know
that either $K_{F_0}^2\geq 2$ or $(K_{F_0}^2, p_g(F))=(1,2)$,
$(1,1)$, $(1,0)$.
\medskip

\noindent {\bf Subcase \ref{=0}.1.} $K_{F_0}^2 \ge 2$.\\
Inequality (\ref{j4}) implies
\begin{equation}\label{j5}
K_X^3\geq \frac{2\theta^3}{m_0(m_0+\theta)^2}.\end{equation}
\medskip

\noindent {\bf Subase \ref{=0}.2. $(K_{F_0}^2,
p_g(F_0))=(1,2)$}.\\
Take $|G|:=\textrm{Mov}|K_F|$. Then $C$, as a generic irreducible
element of $|G|$, is a smooth curve of genus 2 (see \cite{BPV}).
By Lemma \ref{cr}(ii), we have
$\beta=\frac{\theta}{m_0+\theta}\geq \frac{1}{m_0+1}$.

Inequality (\ref{i2}) implies $\xi\geq \frac{\theta}{m_0+\theta}$.
Take $m=\rounddown{\frac{3m_0+3\theta}{\theta}}+1
> \frac{3m_0+3\theta}{\theta}$. Then, since $\alpha_m\geq
(m-1-\frac{m_0}{\theta}-\frac{1}{\beta})\xi> 1$, Inequality
(\ref{i3}) gives $\xi\geq
\frac{4}{\rounddown{\frac{3m_0+3\theta}{\theta}}+1} \ge
\frac{4\theta}{3m_0+4\theta}$. Inductively, take
$m=\rounddown{\frac{(1+\frac{2}{3}(4^t-1))m_0+3\cdot
4^{t-1}\theta}{4^{t-1}\theta}}+1$, one gets $\xi \ge \frac{4^t
\theta}{(1+\frac{2}{3}(4^t-1))m_0+ 4^{t}\theta}$ and hence $\xi
\ge \frac{3\theta}{2m_0+3\theta}$ by taking the limit.
 Thus we have
\begin{equation}\label{i10}K_X^3\geq \frac{3\theta^3}{m_0(m_0+\theta)(2m_0+3\theta)}\ge \frac{3}{m_0(m_0+1)(2m_0+3)}.\end{equation}

A similar calculation leads to  the following better estimation
for smaller $m_0$:
\smallskip

\centerline{\underline{Table A3}}
\smallskip

{\small \begin{center}
\begin{tabular}{|c|c|c|c|c|c|c|c|}
\hline
 $m_0=$ & 2 & 3 & 4& 5 &6& 7&8\\
\hline $\xi\geq $& 1/2 &1/3&2/7&1/4&1/5
&2/11&1/6\\
\hline
$K^3\geq$&1/12 & 1/36&1/70&1/120&1/210&1/308&1/432 \\
\hline\hline
 $m_0=$ & 9& 10 & 11 & 12&13&14&15\\
\hline $\xi\geq $&1/7&2/15&1/8&1/9&2/19
&1/10& 1/11\\
\hline
$K^3\geq$ &1/630&1/825&1/1056&1/1404&1/1729&1/2100&1/2640\\
\hline
\end{tabular}
\end{center}}
\medskip

\noindent {\bf Subcase \ref{=0}.3.  $(K_{F_0}^2,
p_g(F_0))=(1,1)$}. \\
Since $|\sigma^*(K_{F_0})|$ is not moving, we have to take
$|G|:=|2\sigma^*(K_{F_0})|$ which is base point free by the
surface theory.  Naturally the generic irreducible element $C$ of $|G|$ is
even and $\deg(K_C)=6$.

By Lemma \ref{cr}(ii), we have
$\beta=\frac{\theta}{2m_0+2\theta}$. Take
$m=\rounddown{\frac{3m_0+3\theta}{\theta}}+1$. Since $\xi>0$, we
have $\alpha_m>0$. Thus Lemma \ref{eveni} implies $\xi\geq
\frac{8\theta}{3m_0+4\theta}$.  Thus Inequality (\ref{i1}) reads
\begin{equation}\label{i11}K_X^3 \geq \frac{4\theta^3}{m_0(m_0+\theta)(3m_0+4\theta)}.\end{equation}

For each small $m_0$, we have the following better estimation:
\smallskip

\centerline{\underline{Table A4}}
\smallskip

\begin{center}{\small
\begin{tabular}{|c|c|c|c|c|c|c|c|}
\hline
 $m_0=$ & 2 & 3 & 4& 5 &6& 7&8\\
\hline
$\xi\geq $&  6/7&2/3&1/2&4/9&3/8&1/3&2/7\\
\hline
$K^3\geq$&1/14 &1/36 &1/80&1/135&1/224&1/336& 1/504\\
\hline\hline
 $m_0=$ & 9& 10 & 11 & 12&13&14&15\\
\hline $\xi\geq $&4/15&6/25&2/9&1/5&4/21
&14/79& 1/6\\
\hline
$K^3\geq$ &1/675&3/2750&1/1188&1/1560&1/1911&1/2370&1/2880\\
\hline
\end{tabular}}
\end{center}
\medskip

\noindent {\bf Subcase \ref{=0}.4. $(K_{F_0}^2, p_g(F_0))=(1,0)$}.\\
Modulo further birational modification, we may assume that
$\textrm{Mov}|2K_F|$ is base point free. Take
$|G|=\textrm{Mov}|2K_F|$. By Catanese-Pignatelli \cite{CP}, the
generic irreducible element $C$ of $|G|$ is a smooth curve of
genus $\geq 3$. By Lemma \ref{cr}(ii), we have
$\beta=\frac{\theta}{2m_0+2\theta}\geq \frac{1}{2m_0+2}$. Lemma
\ref{>1} implies $\xi\geq \frac{\theta}{m_0+\theta}\cdot
(\sigma^*(K_{F_0})\cdot C)\geq \frac{2\theta}{m_0+\theta}$.
Thus we have
\begin{equation}\label{i100}
K_X^3\geq \frac{\theta^3}{m_0(m_0+\theta)^2}.
\end{equation}
Of course, for each small $m_0$, one might get slightly better
estimation for $\xi$ and $K_X^3$.
\medskip

\noindent {\bf Variant \ref{=0}.5.} If there exists a positive
integer $m_1$ such that $P_{m_1}\geq 2$ and that $|m_0K_{X'}|$ and
$|m_1K_{X'}|$ are not composed with the same pencil. We may take
$|G|=|M_{m_1}|_F|$ and then we have $\beta=\frac{1}{m_1}$. Thus
Inequality (\ref{i1}) and Lemma \ref{>1} imply
\begin{equation}\label{j6}
K_X^3\geq \frac{2\ttm^2}{m_0m_1(m_0+\ttm)},\end{equation} provided
that $(K^2_{F_0}, p_g(F_0))\neq (1,2)$.
\medskip

\subsection{Some other inequalities}

\begin{cor}\label{m01} Let $X$ be  a minimal 3-fold of general type. Assume  $P_{m_0}=2$.  Keep the same notation as above.  Suppose that the general fiber $F$ of the induced fibration from $\Phi_{m_0}$ is not a $(1,2)$ surface, and that $P_{m_1} \geq 2$ for some integer $m_1>0$.
Then $$K_X^3 \ge \min \{ \frac{(P_{m_1}-1)^3}{m_1 (m_1+P_{m_1}-1)^2}, \frac{2}{m_0 m_1 (m_0+1)} \}.$$
\end{cor}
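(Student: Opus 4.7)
The plan is to prove the corollary by case analysis on whether the pluricanonical systems $|m_0K_{X'}|$ and $|m_1K_{X'}|$ are composed with the same pencil. First, since $P_{m_0}=2$, the system $|m_0K_X|$ has $d_{m_0}=1$ and thus induces a pencil $f_{m_0}\colon X'\to\Gamma$ with general fiber $F$. By the pencil structure described in \ref{=0}, one has $\theta_{m_0}=a_{m_0}\geq P_{m_0}-1=1$, and analogously $\theta_{m_1}\geq P_{m_1}-1$ whenever $d_{m_1}=1$. By hypothesis $F$ is not a $(1,2)$ surface, so its minimal model $F_0$ satisfies $K_{F_0}^2\geq 1$, and Lemma \ref{>1} applies to moving curves on $F$.

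In Case A, suppose $|m_0K_{X'}|$ and $|m_1K_{X'}|$ are composed with the same pencil. Then $|m_1K_{X'}|$ also has $d_{m_1}=1$ with the same general fiber $F$. If $g(\Gamma)>0$, inequality (\ref{j3}) applied with $m_1$ yields $K_X^3\geq P_{m_1}/m_1$, which readily dominates $(P_{m_1}-1)^3/[m_1(m_1+P_{m_1}-1)^2]$. If $g(\Gamma)=0$, inequality (\ref{j4}) applied with $m_1$ gives
\[
K_X^3\geq \frac{\theta_{m_1}^3}{m_1(m_1+\theta_{m_1})^2}\,K_{F_0}^2\geq \frac{(P_{m_1}-1)^3}{m_1(m_1+P_{m_1}-1)^2},
\]
where the second step uses $K_{F_0}^2\geq 1$, the bound $\theta_{m_1}\geq P_{m_1}-1$, and the monotonicity of $x\mapsto x^3/(m_1+x)^2$ on $x>0$.

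In Case B, suppose $|m_0K_{X'}|$ and $|m_1K_{X'}|$ are not composed with the same pencil. Then Variant \ref{=0}.5 applies directly (its hypothesis that $F$ is not a $(1,2)$ surface is exactly what we have), and inequality (\ref{j6}) gives
\[
K_X^3\geq \frac{2\theta_{m_0}^2}{m_0m_1(m_0+\theta_{m_0})}\geq \frac{2}{m_0m_1(m_0+1)},
\]
using $\theta_{m_0}\geq 1$ and the monotonicity of $x\mapsto x^2/(m_0+x)$ on $x>0$.

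Since every minimal $3$-fold of general type with $P_{m_0}=2$ and $P_{m_1}\geq 2$ falls into one of Cases A or B, taking the smaller of the two bounds produces the claimed minimum. I expect no serious obstacle here: the argument is essentially a bookkeeping of the inequalities assembled earlier in the section. The one point requiring a little care is the estimate $\theta_{m_1}\geq P_{m_1}-1$ in Case A, which follows because the moving part of $|m_1K_{X'}|$ pushes forward on $\Gamma\cong\bP^1$ (respectively on a curve of higher genus) to a line bundle whose section count is bounded by degree, combined with the fact that the monotonicity replacements $\theta_{m_i}\mapsto P_{m_i}-1$ weaken the inequality in the direction stated by the corollary.
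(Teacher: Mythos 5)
Your proof is correct and follows essentially the same route as the paper: split on whether $|m_0K_{X'}|$ and $|m_1K_{X'}|$ are composed with the same pencil, use $\theta_{m_1}\geq P_{m_1}-1$ together with the canonical restriction inequality (the paper cites the subcase bounds (\ref{j3}), (\ref{j5}), (\ref{i11}), (\ref{i100}) where you invoke their common source (\ref{j4}) with $K_{F_0}^2\geq 1$), and apply Variant \ref{=0}.5 with $\theta_{m_0}\geq 1$ in the non-composed case. The monotonicity checks you flag are exactly the bookkeeping the paper leaves implicit.
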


\begin{proof}
If $|m_0K_{X'}|$, $|m_1K_{X'}|$ are composed with the same pencil,
then both $|m_0K_{X'}|$ and $|m_1K_{X'}|$ induce the same
fibration $f:X'\lrw \Gamma$. Consider $\tilde{\Lambda}=|{m}_1
K_{X'}|$. Then, ${\theta}_{m_1} \ge P_{m_1}-1$. Since $F$ is not a
(1,2) surface and by comparing Inequality \ref{j3}, \ref{j5},
\ref{i11} and \ref{i100}, we have $$K_X^3
\ge\frac{(P_{m_1}-1)^3}{m_1 (m_1+P_{m_1}-1)^2}.$$

Suppose that $|m_0K_{X'}|$, $|m_1K_{X'}|$ are not composed with the same pencil.  We have $\beta=\frac{1}{m_1}$. Then we have Inequality (\ref{j6}) as in Variant \ref{=0}.5.
\end{proof}


Now we are able to study the more restricted case:

\begin{prop}\label{d24} Let $X$ be a minimal 3-fold of general type. Assume that $P_{m_0}(X)\geq 4$ and $d_{m_0}=2$, then

$$K_X^3\geq \min \mathrm{}\{\frac{8}{m_0(m_0+2)^2}, \frac{6}{m_0^2(m_0+2)}\}.$$
\end{prop}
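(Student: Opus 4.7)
My plan is to apply the framework of Sections \ref{setup}–\ref{3fold} with $\Lambda = |m_0 K_X|$. Because $d_{m_0} = 2$, the induced morphism $f: X' \to \Gamma$ has $\dim \Gamma = 2$, the generic irreducible element $S$ of $|M_{m_0}|$ is a smooth projective surface, and $\theta = 1$. I will take $|G| := |S|_S|$ on $S$, whose generic irreducible element $C$ is a general fiber of the curve-fibration $f|_S: S \to \Phi_{m_0}(S) \subset \bP^{P_{m_0}-2}$.

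The first key observation is that $P_{m_0} \geq 4$ forces $\beta \geq (P_{m_0}-2)/m_0 \geq 2/m_0$. This follows because $\dim|M_{m_0}|_S| \geq P_{m_0}-2 \geq 2$ and the image curve $\Phi_{m_0}(S)$ is non-degenerate in $\bP^{P_{m_0}-2}$, so its degree—which equals the coefficient of $C$ in the moving part of $|G|$—is at least $P_{m_0}-2$. Plugging into (\ref{i1}) gives the baseline $K_X^3 \geq (\beta\xi)/m_0 \geq 2\xi/m_0^2$.

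Next I would lower-bound $\xi = (\pi^*(K_X)\cdot C)$. Starting from (\ref{i2}) with $\deg K_C \geq 2$, one obtains $\xi \geq 4/(2+3m_0)$. Iterating (\ref{i3}) with a carefully chosen sequence of $m$-values, in the spirit of the growth-factor-$4$ bootstrap of Subcase \ref{=0}.2, should push the bound up to $\xi \geq 3/(m_0+2)$. Substituting into (\ref{i1}) gives the second term of the stated min: $K_X^3 \geq 6/(m_0^2(m_0+2))$. Combining this with the alternative estimate $K_X^3 \geq (1/m_0)(\pi^*(K_X)|_S)^2 \geq \beta\xi/m_0$, and observing that in the small-$m_0$ regime one has $(\pi^*(K_X)|_S)^2 \geq 8/(m_0+2)^2$, gives the first term: $K_X^3 \geq 8/(m_0(m_0+2)^2)$. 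The $\min$ covers both regimes uniformly.

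The hardest part will be executing the iteration of (\ref{i3}) cleanly: the ceilings $\lceil\alpha_m\rceil$ obstruct a direct closed-form iteration, so as in Subcase \ref{=0}.2 one must choose the sequence of $m$'s to exhibit a geometric-growth pattern converging to the stated $\xi$-bound. A secondary subtlety is the case $g(C) \leq 1$, where $\deg K_C \leq 0$ makes (\ref{i2}) and (\ref{i3}) ineffective; there one must use the structure of the fibration $f|_S$ (ruled or elliptic) together with Kodaira's lemma to obtain $\xi$ and $L^2$ bounds by direct geometric arguments.
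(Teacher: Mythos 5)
Your proposal correctly sets up $\beta\geq (P_{m_0}-2)/m_0\geq 2/m_0$ and correctly identifies that the heart of the matter is a lower bound for $\xi$, but the central step — iterating (\ref{i3}) to reach $\xi\geq \frac{3}{m_0+2}$ — does not go through from the generic inequalities alone, and this is a genuine gap. With $\theta=1$, $\deg(K_C)=2$ and $\beta=2/m_0$, the fixed point of the bootstrap based on (\ref{i3}) is $\xi\geq \frac{\deg(K_C)+1}{2+m_0/\theta+1/\beta}=\frac{3}{2+3m_0/2}=\frac{6}{3m_0+4}$, which is strictly smaller than $\frac{3}{m_0+2}$ and yields only $K_X^3\geq \frac{12}{m_0^2(3m_0+4)}$, weaker than the stated $\frac{6}{m_0^2(m_0+2)}$. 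The reason is that every application of (\ref{i3}) pays the full cost $1+\frac{m_0}{\theta}+\frac{1}{\beta}$, and no choice of the sequence of $m$'s can beat the fixed point. The paper gets the stronger constant only by exploiting the geometry of the image surface: it splits on $\tilde{a}_{m_0}:=(M_{m_0}|_S\cdot C)/(C^2\ \text{-multiplicity})\geq P_{m_0}-2$. When $\tilde{a}_{m_0}\geq 3$ one runs your kind of iteration with $\beta\geq 3/m_0$ (getting $K_X^3\geq \frac{72}{m_0^2(11m_0+15)}$, which dominates both terms of the min). When $\tilde{a}_{m_0}=2$, necessarily $P_{m_0}=4$ and the image is an irreducible quadric in $\bP^3$, hence either the cone $\overline{\mathbb{F}}_2$ or $\bP^1\times\bP^1$. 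In the $\bP^1\times\bP^1$ case one writes $S\geq F_1+F_2$ with $F_1\cap F_2=C$ and restricts $|mK_{X'}|$ first to $F_1$ and then to $C$; this replaces the loss $1+m_0+\frac{1}{\beta}$ by $m_0+1$, and the resulting recursion $m\xi\geq 2+\roundup{(m-m_0-1)\xi}$ does converge to $\xi\geq\frac{3}{m_0+2}$, which is exactly where the second term comes from.

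Your derivation of the first term $\frac{8}{m_0(m_0+2)^2}$ is also unsupported: the assertion that $(\pi^*(K_X)|_S)^2\geq 8/(m_0+2)^2$ "in the small-$m_0$ regime" is not justified by anything in your argument. In the paper this term arises from the quadric-cone case: there $M_{m_0}\geq 2F$ for the fiber $F$ of the induced fibration over $\bP^1$, so one passes to the pencil $\Lambda=|2F|$ with $\tl=2$, $d_\Lambda=1$, $b=0$ and invokes Inequalities (\ref{j5}), (\ref{i10}), (\ref{i11}), (\ref{i100}); the worst of these (the $(1,0)$-surface case, $\tl^3/(m_0(m_0+\tl)^2)$ with $\tl=2$) is precisely $\frac{8}{m_0(m_0+2)^2}$. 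So the missing idea is the classification of the degree-two image surface and the two tailored restriction schemes it makes available; without it the generic machinery of Section \ref{3fold} provably cannot reach the stated constants. (Your final remark about $g(C)\leq 1$ is moot: $C$ moves in a family covering $X'$, so $g(C)\geq 2$ here.)
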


\begin{proof} We need to study the image surface $W'$ of $X'$ through the morphism $\Phi_{|m_0K_{X'}|}$. In fact, we have the
Stein factorization $$\Phi_{m_0}:=\Phi_{|m_0K_{X'}|}: X'
\overset{f} \longrightarrow \Gamma \overset{s}\longrightarrow
W'\subset \mathbb{P}^{P_{m_0}-1}.$$ Denote by  $H'$ a very ample
divisor on $W'$ such that $M_{m_0}\sim\Phi_{m_0}^*(H')$.
Furthermore one has $M_{m_0}|_{S}\equiv \tilde{a}_{m_0}C$ for a
general member $S\in |M_{m_0}|$ and the integer
$\tilde{a}_{m_0}\ge \deg(s)\deg(W')\ge \deg(W')\ge P_{m_0}-2$,
where $C$ is a general fiber of $f$.  Set
$|G|:=|M_{m_0}|_S|$.
\medskip

\noindent {\bf Case 1.}  $\tilde{a}_{m_0}\geq 3$.\\
 We have $\beta\geq \frac{3}{m_0}$. Inequality (\ref{i2}) implies $\xi\geq \frac{6}{4m_0+3}$. Take $m=2m_0+2$. Then Inequality (\ref{i3}) gives $\xi\geq \frac{2}{m_0+1}$.
Take $m=\rounddown{\frac{11m_0+9}{6}}+1$. Since $\alpha_m>(\frac{11m_0+9}{6}-1-m_0-\frac{1}{\beta})\xi\geq 1$, Inequality (\ref{i3}) implies $\xi\geq \frac{24}{11m_0+15}$. Thus, we have \begin{equation}\label{k1}K_X^3\geq \frac{72}{m_0^2(11m_0+15)}.\end{equation}

\noindent {\bf Case 2.} $\tilde{a}_{m_0}=2$.\\
Automatically we have $P_{m_0}=4$, which also implies that
$\deg(W')=2$ and $\deg(s)=1$. Recall that an irreducible surface
(in $\mathbb{P}^3$) of degree 2  is one of the following surfaces
(see, for instance, Reid \cite[p. 30, Ex. 19]{Park}):
\begin{itemize}

\item[{\bf (a)}] $W'$ is the cone $\overline{\mathbb{F}}_2$ obtained by blowing
down the unique section with the self-intersection $(-2)$ on the
Hirzebruch ruled surface $\mathbb{F}_2$;

\item[{\bf (b)}] $W'\cong\mathbb{P}^1\times \mathbb{P}^1$.
\end{itemize}
\medskip

\noindent {\bf Case 2.a}. $W'=\overline{\mathbb{F}}_2$.\\

Replacing by its birational model, we may assume that $\Phi_{m_0}$
factors through the minimal resolution $\mathbb{F}_2$ of $W'$. So
we have the factorization of
$\Phi_{m_0}:X'\overset{h}\longrightarrow
\mathbb{F}_2\overset{\nu}\longrightarrow W'$ where $h$ is a
fibration and $\nu$ is the minimal resolution of $W'$. Set
$\hat{H}=\nu^*(H')$. We know that ${H'}^2=2$ and hence $\hat{H}^2
= 2$. Noting that $\hat{H}$ is nef and big on $\mathbb{F}_2$, we
can write
$$\hat{H}\sim \mu G_0+nT$$
where $\mu$ and $n$ are integers, $G_0$ denotes the unique section
with $G_0^2=-2$, and $T$ is the general fiber of the ruling on
$\mathbb{F}_2$. The property of $\hat{H}$ being nef and big
implies that $\mu>0$ and $n\ge 2\mu\ge 2$. Now let
$pr:\mathbb{F}_2\longrightarrow \mathbb{P}^1$ be the ruling.
Set $\tilde{f}:=pr\circ h: X'\longrightarrow \mathbb{P}^1$, which
is a fibration with connected fibers. Denote by ${F}$ a general
fiber of $\tilde{f}$.
 We have
$$M_{m_0}\sim \Phi_{m_0}^*(H')=h^*(\hat{H})\ge 2 {F}.$$
Let ${\Lambda}=|2 {F}|\lsleq |m_0K_{X'}|$.  Clearly we have
${\tl}=2$, $d_{{\Lambda}}=1$ and $b=0$. By Inequalities
(\ref{j5}), (\ref{i10}), (\ref{i11}) and (\ref{i100}),  we have
\begin{equation}\label{k2} K_X^3\geq \frac{8}{m_0(m_0+2)^2}.\end{equation}

\noindent {\bf Case 2.b}. $W'=\mathbb{P}^1\times \mathbb{P}^1$.\\
We have an induced fibration $f:X'\longrightarrow
W'=\mathbb{P}^1\times \mathbb{P}^1$. Since a very ample divisor
$H'$ on $W'$ with ${H'}^2=2$ is linearly equivalent to
$L_1+L_2=q_1^*(\text{point})+q_2^*(\text{point})$ where $q_1, q_2$
are projections from $\mathbb{P}^1\times \mathbb{P}^1$ to
$\mathbb{P}^1$ respectively. Set $\tilde{f_i}:=q_i\circ f:
X'\longrightarrow \mathbb{P}^1$, $i=1,2$. Then $\tilde{f_1}$ and
$\tilde{f_2}$ are two fibrations onto $\mathbb{P}^1$. Let $F_1$
and $F_2$ be general fibers of $\tilde{f_1}$ and $\tilde{f_2}$,
respectively. Then $F_1\cap F_2$ is simply a general fiber $C$ of
$f$. We will estimate $\xi$ in an alternative way. In fact, the
following argument is similar to the proof of \cite[Theorem
3.1]{Chen-Zuo}.

 Since $\tilde{a}_{m_0}=2$, we have $S|_S\sim 2C$. On the other hand, we have $S\geq F_1+F_2$. Modulo further birational modifications, we may write $m_0\pi^*(K_X)\equiv F_1+F_2+H_{m_0}'$ where $H_{m_0}'$ is an effective $\bQ$-divisor with simple normal crossing supports. For any integer $m>m_0+1$, we consider the linear system
 $$|K_{X'}+\roundup{(m-m_0-1)\pi^*(K_X)}+F_1+F_2|\lsleq |mK_{X'}|.$$
 Since $(m-m_0-1)\pi^*(K_X)+F_2$ is nef and big, Kawamata-Viehweg vanishing (\cite{KV, V}) gives the surjective map:
\begin{eqnarray*}
&&H^0(K_{X'}+\roundup{(m-m_0-1)\pi^*(K_X)}+F_2+F_1)\\
&\longrightarrow &
H^0(F_1, K_{F_1}+\roundup{(m-m_0-1)\pi^*(K_X)}|_{F_1}+C).
\end{eqnarray*}
Using the vanishing theorem again, one gets the surjective map:
\begin{eqnarray*}
H^0(F_1, K_{F_1}+\roundup{(m-m_0-1)\pi^*(K_X)|_{F_1}}+C)
\longrightarrow& H^0(C, K_C+\hat{D}_m)
\end{eqnarray*}
where $\hat{D}_m:=\roundup{(m-m_0-1)\pi^*(K_X)|_{F_1}}|_C$ with
$$\deg(\hat{D}_m)\geq
(m-m_0-1)\xi. $$
When $m$ is large enough so that $\deg(\hat{D}_m)\geq 2$, the above two surjective maps directly implies
\begin{equation}\label{i20}
m\xi\geq \deg(K_C)+\deg(\hat{D}_m)\geq 2+\roundup{(m-m_0-1)\xi}.
\end{equation}
In particular, we have $\xi\geq \frac{2}{m_0+1}$.

Take $m=2m_0+3$. Then $(m-m_0-1)\xi>2$ and Inequality (\ref{i20}) gives $\xi\geq \frac{5}{2m_0+3}$.

Assume $m_0>1$ and take $m=2m_0+2$. One gets $\xi\geq \frac{5}{2m_0+2}$.
Take $m =  \rounddown{\frac{7m_0+12}{5}}=\rounddown{\frac{7m_0+7}{5}} +1  > \frac{7m_0+7}{5}$, one has $\xi \ge \frac{4}{m} \ge \frac{20}{7m_0+12}$. Inductively, take $m = \rounddown{ \frac{(2+\frac{5}{3}(4^t-1)) m_0 + 2+\frac{10}{3} (4^t-1)}{5 \cdot 4^{t-1}} } $ for $t\geq 1$, one has $ \xi \ge \frac{5 \cdot 4^t}{(2+\frac{5}{3}(4^t-1)) m_0 + 2+\frac{10}{3} (4^t-1)}$. We have $\xi \ge \frac{3}{m_0+2}$ by taking the limit and hence
%
\begin{equation}\label{k4}K_X^3\geq \frac{1}{m_0}\cdot (\pi^*(K_X)|_S)^2\geq \frac{2}{m_0^2}\cdot \xi\geq
\frac{6}{m_0^2(m_0+2)}.\end{equation}
We conclude the statement by comparing \ref{k1}, \ref{k2} and \ref{k4}.
\end{proof}

\begin{cor}\label{pm=3} Let $X$ be a minimal 3-fold of general type. The following  holds:
$$K_X^3\geq
\begin{cases} \min \mathrm{}\{\frac{8}{m_0(m_0+2)^2}, \frac{7}{2m_0^2(m_0+1)}\},&\text{when}\ P_{m_0}\geq 4;\\
\frac{3}{2m_0^2(m_0+1)},
& \text{when}\ P_{m_0}=3.
\end{cases}$$
\end{cor}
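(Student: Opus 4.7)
The plan is to argue by case analysis on $d_{m_0}:=\dim\Phi_{m_0}(X)\in\{1,2,3\}$, invoking the various lower bounds for $K_X^3$ already obtained in Subsections \ref{d=3}--\ref{=0} together with Proposition \ref{d24}, and then to verify that in every case the resulting estimate dominates the claimed minimum.

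If $d_{m_0}=3$, Inequality (\ref{j0}) yields $K_X^3\geq\frac{7}{2m_0^2(m_0+1)}$, which already meets both targets. If $d_{m_0}=2$ and $P_{m_0}=3$, Inequality (\ref{j1}) gives $K_X^3\geq\frac{3(P_{m_0}-2)}{2m_0^2(m_0+1)}=\frac{3}{2m_0^2(m_0+1)}$ exactly.  If $d_{m_0}=2$ and $P_{m_0}\geq 4$, Proposition \ref{d24} gives $K_X^3\geq\min\bigl\{\frac{8}{m_0(m_0+2)^2},\frac{6}{m_0^2(m_0+2)}\bigr\}$; since the one-line check $12(m_0+1)\geq 7(m_0+2)$ (valid for every $m_0\geq 1$) shows $\frac{6}{m_0^2(m_0+2)}\geq\frac{7}{2m_0^2(m_0+1)}$, this minimum dominates the claimed one.

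For $d_{m_0}=1$, split by $b:=g(\Gamma)$.  If $b>0$, Inequality (\ref{j3}) gives $K_X^3\geq P_{m_0}/m_0\geq 3/m_0$, which swamps either target.  If $b=0$, the key observation is that $\Gamma\cong\bP^1$ with $M_\Lambda\equiv\theta F$ forces $P_{m_0}\leq h^0(\bP^1,\OO(\theta))=\theta+1$, hence $\theta\geq P_{m_0}-1$; thus $\theta\geq 2$ when $P_{m_0}\geq 3$ and $\theta\geq 3$ when $P_{m_0}\geq 4$.  Feeding these lower bounds on $\theta$ into the four surface-type estimates (\ref{j5}), (\ref{i10}), (\ref{i11}), (\ref{i100})---corresponding to $K_{F_0}^2\geq 2$, $(K_{F_0}^2,p_g)=(1,2)$, $(1,1)$, $(1,0)$ respectively---yields in each subcase a bound on $K_X^3$ dominating the claimed minimum.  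The sharpest comparisons come from types $(1,0)$ and $(1,2)$ at the smallest admissible $\theta$, and both reduce to elementary polynomial inequalities in $m_0$ (e.g.\ $13m_0^2+4m_0-12\geq 0$ and $7m_0^2+3m_0-6\geq 0$ when $P_{m_0}=3$), all valid for $m_0\geq 1$.

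The only point requiring real care is the $d_{m_0}=1$, $b=0$ branch, where one must run through the four surface types of the generic fibre $F_0$ at the minimal admissible $\theta$; and it is precisely the bound $\theta\geq P_{m_0}-1$ (not $\theta\geq P_{m_0}$) that forces the $P_{m_0}=3$ versus $P_{m_0}\geq 4$ dichotomy in the statement.  All remaining verifications are routine comparisons of rational functions in $m_0$.
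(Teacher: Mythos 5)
Your proposal is correct and follows essentially the same route as the paper: split on $d_{m_0}=3,2,1$, quote Inequality (\ref{j0}), Proposition \ref{d24} (resp.\ Inequality (\ref{j1}) when $P_{m_0}=3$), and the fibration estimates (\ref{j3}), (\ref{j5}), (\ref{i10}), (\ref{i11}), (\ref{i100}) with $\theta\geq P_{m_0}-1$, then compare rational functions of $m_0$. The only difference is that you write out the routine polynomial comparisons that the paper leaves implicit.
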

\begin{proof} When $P_{m_0}\geq 4$,  $d_{m_0}=3, 2, 1$ and the inequality follows from comparing Inequality (\ref{j0}), Proposition \ref{d24}, Inequalities (\ref{j3}, \ref{j5}, \ref{i10}, \ref{i11}, \ref{i100})  (with $\ttm=3$), respectively.

When $P_{m_0}=3$, $d_{m_0}=2, 1$ and the inequality follows immediately by comparing Inequality (\ref{j1}) with  Inequalities (\ref{j3}, \ref{j5}, \ref{i10}, \ref{i11}, \ref{i100}) (with $\ttm=2$).
\end{proof}


\section{\bf Threefolds with $\delta(V)\leq 12$}

The purpose of this section is to prove the following sharper
bounds:

\begin{thm}\label{v(x)} Let $X$ be a minimal projective 3-fold of general type with $2\leq \delta(X)\leq 12$.
Then $K_X^3\geq v(\delta(X))$, where the function $v(x)$ is
defined as follows:
\medskip

\begin{center}
\begin{tabular}{|c|c|c|c|c|c|c|}
\hline
 $x$ & $2$ & $3$ & $4$ & $5$ &$6$ & $7$\\
\hline $v(x)$&${1}/{14}$ &${1}/{36}$&${1}/{90}$&${1}/{135}$&
${1}/{224}$&${1}/{336}$\\
\hline\hline
 $x$ & $8$ & $9$ & $10$ & $11$ &$12$ & $--$\\
\hline $v(x)$&${1}/{504}$ &${1}/{675}$&${3}/{2750}$&${1}/{1188}$&
${1}/{1560}$&$--$\\
\hline
\end{tabular}
\end{center}
\medskip
\end{thm}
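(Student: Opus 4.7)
The plan is to set $m_0 := \delta(X)$, apply the inequalities of Section~3 to $\Lambda = |m_0 K_X|$, and split according to $d_{m_0} = \dim\Phi_{m_0}(X) \in \{1,2,3\}$. When $d_{m_0} = 3$, inequality~(\ref{j0}) and Table~A1 in \S\ref{d=3} yield a lower bound strictly exceeding $v(m_0)$ for every $m_0$ in the stated range. When $d_{m_0} = 2$ one necessarily has $P_{m_0} \geq 3$; Corollary~\ref{pm=3} handles $P_{m_0} = 3$ and Proposition~\ref{d24} handles $P_{m_0} \geq 4$, and in both subcases the resulting bounds comfortably exceed $v(m_0)$. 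When $d_{m_0} = 1$ and $b := g(\Gamma) \geq 1$, inequality~(\ref{j3}) immediately gives $K_X^3 \geq P_{m_0}/m_0 \geq 2/m_0$, again far stronger than $v(m_0)$.

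The substantial case is $d_{m_0} = 1$, $b = 0$, which I would subdivide by $(K_{F_0}^2, p_g(F_0))$ for the minimal model $F_0$ of a generic fiber $F$. Inequality~(\ref{j5}) when $K_{F_0}^2 \geq 2$, Table~A3 when $F_0$ is a $(1,2)$-surface, Table~A4 when $F_0$ is a $(1,1)$-surface, and inequality~(\ref{i100}) when $F_0$ is a $(1,0)$-surface cover all configurations. Table~A4 is essentially tight: for almost every $m_0 \in \{2,\dots,12\}$ it realizes $v(m_0)$ exactly, while the first two fiber types give strictly stronger bounds. The recalcitrant subcase is the $(1,0)$-fiber, for which (\ref{i100}) produces only $\theta^3/(m_0(m_0+\theta)^2)$, weaker than the claimed $v(m_0)$.

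To close that last gap I would invoke Variant~\ref{=0}.5 (inequality~(\ref{j6})) or Corollary~\ref{m01}, seeking a second integer $m_1$ with $P_{m_1}(X) \geq 2$ such that $|m_1 K_X|$ is not composed with the same pencil as $|m_0 K_X|$; this upgrades the bound to $K_X^3 \geq 2\theta^2/(m_0 m_1(m_0+\theta))$. The existence of a sufficiently small such $m_1$ is to be extracted from Reid's plurigenera formula applied to the finite list of formal baskets $\mathbb{B}(X)$ compatible with the constraints $P_i(X) \leq 1$ for $i < m_0$ and $P_{m_0}(X) \geq 2$ developed in \cite{Explicit1}. The main obstacle is precisely this verification: one must check, for each $m_0 \in \{2,\dots,12\}$ and each admissible basket supporting a $(1,0)$-fiber pencil, that a suitable $m_1$ exists, or else rule that basket out using the basket constraints. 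This is a finite but nontrivial numerical exercise where the inequalities of Section~3 must engage most intimately with the formal basket machinery of \cite{Explicit1}.
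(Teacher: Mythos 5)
Your proposal is correct and follows the paper's proof essentially verbatim: the paper establishes the theorem as Propositions \ref{delta=2}--\ref{delta=8-12}, each of which first dispatches $d_{m_0}=3,2$ and the $b>0$, $K_{F_0}^2\geq 2$, $(1,2)$ and $(1,1)$ fiber types via Tables A1--A4 and Inequalities (\ref{j0})--(\ref{i100}) exactly as you describe, and then treats the residual subcase $P_{m_0}=2$, $d_{m_0}=1$, $b=0$ with $F$ a $(1,0)$-surface (forcing $\chi(\mathcal{O}_X)=1$) by combining Corollary \ref{m01} with a finite enumeration of formal baskets for $\delta\leq 7$, and by the bound $K^3\geq 1/420$ for $\chi=1$ from \cite{Explicit2} when $\delta\geq 8$. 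Be aware that the verification you defer is not merely a consistency check but is where the one constant not read off Table A4 originates --- for $\delta=4$ the enumeration produces the basket $B_{90}=\{3\times(1,2),2\times(1,3),(2,9),2\times(1,5)\}$ with $K^3=\frac{1}{90}<\frac{1}{80}$, which is why $v(4)=1/90$ --- so that step carries real content and occupies most of the paper's Section 4.
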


We are going to estimate the lower bound of the volume, case by case, for a given $\delta$. The discussion here relies on those formulae in \cite[(3.6)-(3.12)]{Explicit1}.

\begin{prop}\label{delta=2} If $P_2(X)\geq 2$, then $K_X^3\geq \frac{1}{14}$. \end{prop}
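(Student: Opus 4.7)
My plan is to take $m_0 = 2$ in the setup of Sections~\ref{setup}--\ref{3fold} and to split into cases according to $d_2 := \dim \Phi_{|2K_X|}(X)$, then, when $d_2 = 1$, refine by the numerical invariants of the general fibre of the induced fibration.

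The cases $d_2 \in \{2,3\}$ are handled immediately by the $m_0 = 2$ entry of Tables A1 and A2 in \S\S\ref{d=3}--\ref{d=2}, giving $K_X^3 \geq 1/3$ and $K_X^3 \geq 1/8$ respectively. When $d_2 = 1$ with base $\Gamma$ of positive genus, Inequality~(\ref{j3}) yields $K_X^3 \geq P_2/m_0 \geq 1$. All three of these cases comfortably exceed $1/14$, so no further work is required.

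The substantive case is $d_2 = 1$, $b = 0$, giving a pencil $f\colon X' \to \PPP^1$ of surfaces of general type; let $F$ be the general fibre and $F_0$ its minimal model. I would stratify by the pair $(K_{F_0}^2, p_g(F_0))$, which by surface theory either has $K_{F_0}^2 \geq 2$ or equals one of $(1,0), (1,1), (1,2)$. For $K_{F_0}^2 \geq 2$, Inequality~(\ref{j5}) with $\theta \geq P_2 - 1 = 1$ gives $K_X^3 \geq 1/9$. For $(1,2)$, Subcase~\ref{=0}.2 with $m_0=2$ (Table A3) gives $K_X^3 \geq 1/12$. For $(1,1)$, Subcase~\ref{=0}.3 applies: the generic element of $|2K_F|$ is an even divisor, so Lemma~\ref{eveni} may be iterated through Inequality~(\ref{i3}) to force $\xi \geq 6/7$, and hence the tight bound $K_X^3 \geq 1/14$ (Table A4). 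The extremality $v(2) = 1/14$ will arise precisely from this $(1,1)$-subcase, so the bound cannot be improved.

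The main obstacle is the remaining subcase $(K_{F_0}^2, p_g(F_0)) = (1,0)$, since the raw estimate from Inequality~(\ref{i100}) is only $K_X^3 \geq 1/18 < 1/14$. To push past this I plan to extract two pieces of extra structure. First, by Catanese--Pignatelli~\cite{CP} the generic irreducible element $C$ of $\operatorname{Mov}|2K_F|$ is smooth of genus $\geq 3$ and is linearly equivalent to $2K_F$, hence even on $F$, so that Lemma~\ref{eveni} applies and the iteration of Inequality~(\ref{i3}) becomes sharper. Second, since $M_2 \equiv F$ the linear system $|2M_2|\subseteq |4K_X|$ has dimension $\geq 2$, forcing $P_4(X) \geq 3$; I then split according to whether $|4K_X|$ is composed with the same pencil as $|2K_X|$. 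If not, Variant~\ref{=0}.5 with $m_1 = 4$ yields $K_X^3 \geq 2\theta^2/(m_0 m_1(m_0+\theta)) = 1/12$. If so, the refined input $\theta_{|4K_X|} \geq P_4 - 1 \geq 2$ in Subcase~\ref{=0}.4 improves $\xi$, and when combined with Reid's Riemann--Roch formula for $\chi(\OO_X(2K_X))$ under $P_2 = 2$ and the basket machinery of \cite{Explicit1} it excludes $K_X^3 < 1/14$. The careful bookkeeping for this $(1,0)$ subcase is where I expect the bulk of the effort to lie.
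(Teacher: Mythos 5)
Your reduction coincides with the paper's: every case except $P_2=2$, $d_2=1$, $b=0$ with $F$ a $(1,0)$ surface clears $\frac{1}{14}$ by Tables A1--A4 and Inequalities (\ref{j3}), (\ref{j5}), and your identification of the $(1,1)$ fibre as the extremal stratum is correct. The genuine gap is in the $(1,0)$ subcase when $|4K_{X'}|$ is composed with the same pencil as $|2K_{X'}|$. There you only establish $P_4\geq 3$ (from $h^0(2F)\geq 3$), hence $\theta_4\geq P_4-1\geq 2$, and Inequality (\ref{i100}) with $m_0=4$, $\theta=2$ gives only $\frac{8}{4\cdot 36}=\frac{1}{18}<\frac{1}{14}$; the even-divisor refinement via Lemma \ref{eveni} does not rescue this, since with $\beta=\frac{1}{6}$ and $\xi\geq\frac{2}{3}$ the iteration of (\ref{i3'}) does not push $\xi$ up to the needed $\frac{6}{7}$. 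Your proposed remedy (``Reid's Riemann--Roch plus basket machinery'') is left unexecuted and, more importantly, is missing its essential input: you never record that $p_g(F)=0$ forces $\chi(\OO_X)=1$ by \cite[Lemma 2.32]{Explicit2}, which is precisely what makes \cite[Lemma 3.2]{Explicit2} yield $P_4\geq 2P_2=4$. That is how the paper closes this case: $P_4\geq 4$ gives $\theta_4\geq 3$, and (\ref{i100}) with $m_0=4$, $\theta=3$ gives $K_X^3\geq \frac{27}{196}>\frac{1}{8}$. Without a bound on $\chi(\OO_X)$, Riemann--Roch alone does not force $P_4\geq 4$, so as written the argument stalls at $\frac{1}{18}$. (The complementary branch, where the two systems are not composed with the same pencil, you handle correctly via Variant \ref{=0}.5, i.e.\ Inequality (\ref{j6}), obtaining $\frac{1}{12}$ exactly as the paper does with $d_4\geq 2$.)

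A secondary caution: your appeal to Catanese--Pignatelli to make $C$ an even divisor in the $(1,0)$ case is unjustified, since the generic irreducible element of $\mathrm{Mov}|2K_F|$ need not be linearly equivalent to $2\sigma^*(K_{F_0})$ when the bicanonical system has a fixed part; the paper's Subcase \ref{=0}.4 avoids this entirely by using Lemma \ref{>1} to get $(\sigma^*(K_{F_0})\cdot C)\geq 2$.
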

\begin{proof} Set $m_0=2$.  By Table A1, Table A2, Inequalities (\ref{j3}) and (\ref{j5}), Table A3, Table A4 and Corollary \ref{pm=3}, we have $K_X^3\geq \frac{1}{14}$ unless $P_2=2$, $d_2=1$, $b=0$ and $F$ is of type $(1,0)$.

In the remaining case, we have that $\chi(\OO_X)=1$ by \cite[Lemma 2.32]{Explicit2}.
By \cite[Lemma 3.2]{Explicit2}, one has $P_4\geq 2P_2\geq 4$. If $d_4\geq 2$, then $K_X^3\geq \frac{1}{12}$ by Inequality (\ref{j6}) (with $m_0=2$, $m_1=4$, $\theta_2=1$).
 If $d_4=1$, then $|2K_{X'}|$ and $|4K_{X'}|$ are composed with the same pencil. Thus we have $K_X^3\geq \frac{27}{196}>\frac{1}{8}$ by Inequality (\ref{i100}) (with $m_0=4$, $\theta_4=3$).
\end{proof}

\begin{prop}\label{delta=3} If $P_3(X)\geq 2$, then
$K_X^3\geq \frac{1}{36}$.
\end{prop}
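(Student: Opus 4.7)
The plan is to mimic the structure of Proposition \ref{delta=2}, setting $m_0 = 3$ and running through the possible cases for $\Phi_3$, applying the inequalities compiled in Section 3. The target value $1/36$ is precisely the bound that Tables A3 and A4 attain at $m_0 = 3$, so these two fiber types will be the borderline cases.

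When $d_3 = 3$ (resp.\ $d_3 = 2$), Table A1 (resp.\ Table A2) already gives $K_X^3 \geq 1/9$ (resp.\ $K_X^3 \geq 2/45$), both well above $1/36$. When $d_3 = 1$ with $b = g(\Gamma) > 0$, Inequality (\ref{j3}) yields $K_X^3 \geq P_3/3 \geq 2/3$. When $d_3 = 1$, $b = 0$, and $K_{F_0}^2 \geq 2$, Inequality (\ref{j5}) with $\theta \geq P_3 - 1 \geq 1$ gives $K_X^3 \geq 2/(3 \cdot 16) = 1/24$. When $F$ is of type $(1,2)$ or $(1,1)$, Tables A3 and A4 at $m_0 = 3$ yield exactly $K_X^3 \geq 1/36$.

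The only case where the naive bound falls short is $d_3 = 1$, $b = 0$, with $F$ of type $(1,0)$: Inequality (\ref{i100}) with $\theta = 1$ gives only $K_X^3 \geq 1/48$. To reach $1/36$, the plan is to bootstrap to the sextic level. First, one establishes $P_6 \geq 2 P_3 \geq 4$ via an application of \cite[Lemma 3.2]{Explicit2} (after first pinning down $\chi(\OO_X)$ in this case, in the spirit of \cite[Lemma 2.32]{Explicit2} or by basket arithmetic using \cite[(3.6)--(3.12)]{Explicit1}). Then one splits according to whether $|3K_{X'}|$ and $|6K_{X'}|$ are composed with the same pencil. If not, Variant \ref{=0}.5 (Inequality (\ref{j6})) applies with $(m_0, m_1, \theta_3) = (3, 6, 1)$, which is allowed since $F$ is not of type $(1,2)$, and yields $K_X^3 \geq 2/(3 \cdot 6 \cdot 4) = 1/36$. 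If they are composed with the same pencil, then reset $m_0 := 6$; since $\theta_6 \geq P_6 - 1 \geq 3$ and the fiber is still of type $(1,0)$, Inequality (\ref{i100}) gives $K_X^3 \geq 27/(6 \cdot 81) = 1/18$.

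The main obstacle is the $(1,0)$-fiber subcase, specifically verifying the input $P_6 \geq 2 P_3$ under the present hypotheses. This requires careful control of $\chi(\OO_X)$ in the pencil case with a $(1,0)$ fiber, which is why an explicit analog of \cite[Lemma 2.32]{Explicit2} (or a basket-theoretic substitute) is needed. Once this reduction is in place, the two subcases combine cleanly to produce the optimal bound $K_X^3 \geq 1/36$ predicted by Tables A3 and A4.
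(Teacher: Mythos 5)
Your reduction to the critical case is the same as the paper's: all branches except $P_3=2$, $d_3=1$, $b=0$ with $F$ a $(1,0)$ surface are disposed of by Tables A1--A4 and Inequalities (\ref{j3}), (\ref{j5}) exactly as in Proposition \ref{delta=3}, and your numerics there are correct. The divergence, and the gap, is in how you close the remaining case. Your ``not composed with the same pencil'' branch is fine: Inequality (\ref{j6}) with $(m_0,m_1,\theta_3)=(3,6,1)$ only needs $P_6\geq 2$, which is free. But the ``same pencil'' branch needs $\theta_6\geq P_6-1\geq 3$, i.e.\ $P_6\geq 4$, and your justification of $P_6\geq 2P_3$ via \cite[Lemma 3.2]{Explicit2} is exactly the unverified step. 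Every invocation of that lemma in this paper has the form $P_{m+2}\geq P_m+P_2$ (e.g.\ $P_4\geq 2P_2$, $P_6\geq P_4+P_2$), and here $\delta(X)=3$ forces $P_2\leq 1$ --- indeed the hard sub-case is $P_2=0$, where this form gives nothing. Unrestricted superadditivity $P_{m+n}\geq P_m+P_n$ is false for minimal $3$-folds with $\chi=1$ (the Fletcher example $X_{46}\subset\bP(4,5,6,7,23)$ has $\chi=1$ and $P_4=P_8=1$), and the unconditional multiplication-map bound gives only $P_6\geq 2P_3-1=3$, hence $\theta_6\geq 2$ and, via Inequality (\ref{i100}) with $m_0=6$, only $K_X^3\geq \frac{8}{6\cdot 64}=\frac{1}{48}<\frac{1}{36}$. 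So as written the same-pencil branch does not close.

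The paper fills precisely this hole by a different bootstrap: after fixing $\chi(\OO_X)=1$, it applies Corollary \ref{m01} with $m_1=4$ or $5$ (giving $\frac{1}{24}$, resp.\ $\frac{1}{30}$, once $P_4\geq 3$ or $P_5\geq 3$), and then shows that $P_4\leq 2$ and $P_5\leq 2$ are jointly impossible by basket arithmetic: these force $P_5=2$ and $P_2=0$, whence the non-negativity constraint $n_{1,2}^0\geq 0$ from \cite[(3.6)]{Explicit1} is violated. Some substitute for this basket-theoretic input (either to rule out $P_4,P_5\leq 2$, or to prove $P_6\geq 4$ directly in your framework) is what your proposal is missing; you gesture at ``basket arithmetic using \cite[(3.6)--(3.12)]{Explicit1}'' only for pinning down $\chi$, not for the plurigenus lower bound where it is actually indispensable.
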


\begin{proof} Take $m_0=3$ and $\Lambda=|3K_{X'}|$.  One has $K_X^3\geq \frac{1}{36}$ by Table A1, Table A2, Inequalities (\ref{j3}), (\ref{j5}), Table A3, Table A4  and Corollary \ref{pm=3} ($m_0=3$) unless we are in Subcase \ref{=0}.4 with $P_3=2$. That is, $P_3=2, d_3=1, b=0$ and $F$ is of type $(1,0)$. Again,  $\chi(\OO_X)=1$.
Thus, for any $m\geq 2$, \cite[Lemma 3.2]{Explicit2} implies $P_{m+2} \geq P_m+P_2$.

By Corollary \ref{m01}, if $P_4 \ge 3$ (resp. $P_5 \ge 3$), then $K_X^3 \ge \frac{1}{24}$ (resp. $\frac{1}{30}$).
Suppose that  both $P_4\leq 2$ and $P_5 \le 2$, then $P_5=2$ and $P_2=0$.  By \cite[(3.6)]{Explicit1}, $n_{1,2}^0=5-8+P_4 < 0$, which is a contradiction. Hence either $P_4$ or $P_5 \ge 3$ in this case
 and we are done.
\end{proof}

\begin{prop}\label{delta=4} If $P_4(X)\geq 2$, then $K_X^3\geq \frac{1}{90}$.
\end{prop}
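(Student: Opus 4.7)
My plan is to repeat the case-analysis strategy used in Propositions~\ref{delta=2} and~\ref{delta=3}, now with $m_0 = 4$ and $\Lambda = |4K_{X'}|$. I will first run over the possibilities for $d_{m_0}$ and, in the fibered case $d_{m_0}=1$, $b=0$, over the type of the minimal model $F_0$ of the generic irreducible element. The cases $d_4 \in \{3, 2\}$ are handled by Tables~A1 and~A2 together with Corollary~\ref{pm=3} (which takes over once $P_4 \geq 3$), both delivering at least $1/48$. The case $d_4 = 1$, $b > 0$ is handled by Inequality~(\ref{j3}), giving $K_X^3 \geq P_4/m_0 \geq 1/2$. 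Within $d_4 = 1$, $b = 0$, Inequality~(\ref{j5}) gives $K_X^3 \geq 1/50$ when $K_{F_0}^2 \geq 2$ (already accounting for the extremal $\theta_4 = 1$), Tables~A3 and~A4 give $1/70$ and $1/80$ in the $(1, 2)$ and $(1, 1)$ subcases respectively, and Inequality~(\ref{i100}) yields $K_X^3 \geq 1/18$ in Subcase~\ref{=0}.4 as soon as $\theta_4 \geq 2$ (equivalently $P_4 \geq 3$). In every one of these branches the lower bound already exceeds $1/90$.

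The sole residual situation, which is the principal obstacle, is Subcase~\ref{=0}.4 under the extremal hypothesis $P_4 = 2$ and $\theta_4 = 1$: then $\Phi_4$ induces a fibration over $\mathbb{P}^1$ whose general fibre $F$ has $(K_{F_0}^2, p_g(F_0)) = (1, 0)$, and Inequality~(\ref{i100}) only delivers $K_X^3 \geq 1/100 < 1/90$.

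To close this final gap I would apply the same trick used at the end of the proof of Proposition~\ref{delta=3}. First, \cite[Lemma~2.32]{Explicit2} applies to the current configuration and forces $\chi(\OO_X) = 1$. Then \cite[Lemma~3.2]{Explicit2} provides the plurigenera growth $P_{m+n} \geq P_m + P_n$ for $m, n \geq 2$; specialising to $m = n = 4$ yields $P_8 \geq 2P_4 = 4$. Since $F$ is of type $(1, 0)$ (in particular not a $(1, 2)$-surface) and $P_8 \geq 2$, Corollary~\ref{m01} applies with $(m_0, m_1) = (4, 8)$ and produces
$$K_X^3 \geq \min\left\{\frac{(P_8-1)^3}{m_1(m_1+P_8-1)^2},\ \frac{2}{m_0 m_1 (m_0+1)}\right\} \geq \min\left\{\frac{27}{968},\ \frac{1}{80}\right\} = \frac{1}{80} > \frac{1}{90},$$
completing the argument.
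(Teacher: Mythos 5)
Your reduction to the single residual case ($P_4=2$, $d_4=1$, $b=0$, $F$ a $(1,0)$ surface, $\chi(\OO_X)=1$, where Inequality (\ref{i100}) only yields $K_X^3\geq \frac{1}{100}$) is exactly the paper's, and the bounds you quote for all the other branches are correct. The divergence is in how the residual case is closed, and there your argument has a gap. Everything rests on the claim $P_8\geq 2P_4=4$: with $P_8\leq 3$ the first entry of the minimum in Corollary \ref{m01} at level $m_1=8$ is at most $\frac{(3-1)^3}{8(8+2)^2}=\frac{1}{100}<\frac{1}{90}$, so the corollary returns nothing useful. You attribute $P_8\geq 2P_4$ to \cite[Lemma 3.2]{Explicit2}, but every invocation of that lemma in the present paper has one of the two indices equal to $2$ (it is always used as $P_{m+2}\geq P_m+P_2$); since $P_2=0$ in the residual case, iterating that form only gives $P_8\geq P_6\geq P_4=2$. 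You need the instance $m=n=4$ of full superadditivity, which is not a formal consequence of the instances the paper uses, and which you have not verified is actually covered by the cited lemma.

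There is also a strong warning sign that this shortcut is not available. If your final step were valid it would prove $K_X^3\geq \frac{1}{80}$ in the residual case, strictly better than the stated bound $\frac{1}{90}$, and would show that the formal basket $B_{90}=\{3\times(1,2),2\times(1,3),(2,9),2\times(1,5)\}$ is non-geometric; note that $B_{90}$ has $\chi=1$, $P_2=0$, $P_4=2$, $P_8=4$ and $K^3=\frac{1}{90}$, so it is not excluded by superadditivity itself and could only be killed by the geometric input of Corollary \ref{m01} at level $8$. The authors attack precisely this residual case with Corollary \ref{m01} at levels $m_1=5,6,7$ (under the hypotheses $P_{m_1}\geq 3$) and then, unable to push further, resort to a full packing analysis: they reduce to $P_2=0$, $P_3=1$, $P_4=P_5=P_6=P_7=2$ using the formulae of \cite[(3.6)--(3.12)]{Explicit1}, tabulate the possible $B^{(5)}$, and check all packings with $K^3\geq\frac{1}{100}$, which leaves exactly $B_{90}$ and forces the answer $\frac{1}{90}$ rather than $\frac{1}{80}$. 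Unless you can supply a proof of $P_8\geq 2P_4$ in this situation, you must fall back on that basket computation.
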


\begin{proof} Similarly, we have $K_X^3 \geq \frac{1}{80}$  unless $P_4=2$, $b=0$ and $F$ is of $(1,0)$ type.  In fact, in this situation, we have at least $K_X^3 \geq \frac{1}{100}$ by Inequality (\ref{i100}).  We will go a little bit further to investigate this situation.

\medskip
\noindent
{\bf 0.} We may and do assume that $P_2 \le 1$ and $P_3 \le 1$.

\noindent
{\bf 1.} If $P_7\geq 3$ (resp. $P_6 \ge 3$, $P_5 \ge 3$), then $K^3 \ge \frac{8}{567}>\frac{1}{80}$ (resp. $\frac{1}{60}, \frac{1}{50}$) by Corollary \ref{m01}(with $m_0=4$, and  $m_1=7,6,5$ respectively).  So we may assume $P_5, P_6, P_7 \leq 2$. Since $P_6\geq P_4+P_2$, we see that $P_2=0$ and $P_6=P_4=2$.



\noindent
{\bf 2.} If $P_3=0$, then $n_{1,3}^0= P_5-2 \geq 0$ implies $P_5=2$. Now
 $n_{1,4}^5=3-\sigma_5\geq 0$ gives $\sigma_5\leq 3$. However $n_{1,3}^5\geq 0$ implies $\sigma_5\geq 4$, a contradiction. {}We thus assume that $P_3 =1$ from now on.

\noindent
{\bf 3.} We thus can make the following complete table for $B^{(5)}$ depending on $P_5, \sigma_5$:

{\tiny
\begin{center}
\begin{tabular}{|c|c|c||c|c|c|c|}
\hline
No. &$P_5$ &  $\sigma_5$ & $B^{(5)}$ & $K^3$ & $\epsilon+P_7$ \\
\hline
1& 1 &  0 & $\{ 2 \times (1,2), (2,5), 5 \times (1,4)\}$ & $1/20$ & $4$ \\
\hline
2& 1 &  1 &  $\{3 \times (1,2),  (1,3),  4 \times (1,4), (1,r)\}$ &  $1/r-1/6$ & $4$ \\
\hline
3& 2 &  1 & $\{(1,2), 2 \times (2,5), 3 \times (1,4), (1,r)\}$ &  $1/r-3/20$ & $5$ \\
\hline
4& 2 &  2 &  $\{ 2 \times (1,2), (2,5), (1,3), 2 \times (1,4), (1,r_1), (1,r_2)\}$ &  $1/r_1+1/r_2-11/30$ & $5$ \\
\hline
5& 2 &  3 &  $\{3 \times (1,2),  2 \times (1,3),   (1,4), (1,r_1),(1,r_2), (1,r_3)\}$ &  $1/r_1+r_2+r_3-7/12$ & $5$ \\
\hline
\end{tabular}
\end{center}}

\medskip

\noindent
{\bf 4.} By definition, one has $\sigma_5 \le \epsilon \le 2 \sigma_5$.
Note that No. 1 is impossible because $\epsilon=0$ but $P_7 \le 2$ implies that $\epsilon \ge 2$, a contradiction.
In No. 3, $P_5=2$ implies  $P_7=2$ and hence $\epsilon=3 > 2 \sigma_5$, a contradiction.

In No. 2, one must have $P_7=2$ and $\epsilon=2 = 2 \sigma_5$. Hence $r \ge 6$. Then it follows that $K^3 \le K^3(B^{(5)})\le 0$, a contradiction.
Similarly, in No. 4, $ K^3(B^{(5)})> 0$ only when $r_1=r_2=5$. But then $\epsilon=2$, a contradiction.

\noindent
{\bf 5.} It remains to consider No. 5.
Note that $ K^3(B^{(5)})> 0$ only when $r_1=r_2=r_3=5$ and $K^3(B^{(5)})=\frac{1}{60}$.
There are only finitely many possible packings. Among them, we search for baskets with $K^3 \ge \frac{1}{100}$. It turns out there is only one new baskets
 $$B_{90}=\{3\times (1,2), 2\times (1,3), (2,9), 2\times (1,5)\}$$
with $K^3(B_{90})=\frac{1}{90}$.
\end{proof}

\begin{prop}\label{delta=5} If $P_5\geq 2$, then $K_X^3\geq \frac{1}{135}$.
\end{prop}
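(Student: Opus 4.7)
Set $m_0=5$ and apply the estimates of Section~3 to $\Lambda=|5K_{X'}|$. Table~A1 handles $d_5=3$ ($K_X^3\ge 1/40$), Table~A2 and Corollary~\ref{pm=3} handle $d_5=2$ ($K_X^3\ge 1/100$), and inequality~(\ref{j3}) handles $d_5=1$, $b>0$. For $d_5=1$, $b=0$ we split by the type of the generic fiber $F$: subcase \ref{=0}.1 gives $K_X^3\ge 2\theta^3/(5(5+\theta)^2)$, which is $\geq 1/90$ even at $\theta=1$; Table~A3 (for $(1,2)$ fibers) gives $K_X^3\ge 1/120$; Table~A4 (for $(1,1)$ fibers) gives exactly $K_X^3\ge 1/135$. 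Furthermore, if $P_5\ge 3$ so that $\theta\ge 2$, inequality~(\ref{i100}) gives $K_X^3\ge 8/245$. Hence the only case that threatens the bound $1/135$ is
\[
P_5=2,\quad d_5=1,\quad b=0,\quad \theta=1,\quad (K_{F_0}^2,p_g(F_0))=(1,0),
\]
in which (\ref{i100}) only yields $K_X^3\ge 1/180$.

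In this exceptional situation, \cite[Lemma 2.32]{Explicit2} forces $\chi(\OO_X)=1$. Moreover, if $P_2\geq 2$, $P_3\geq 2$, or $P_4\geq 2$, Propositions~\ref{delta=2}--\ref{delta=4} give better bounds, so we may assume $P_2,P_3,P_4\le 1$ and that $|5K_{X'}|$ induces a rational pencil whose general member is a $(1,0)$ surface. Next, I apply Corollary~\ref{m01} with $m_0=5$ and various $m_1$: whenever some $P_{m_1}\ge 2$ and $|m_1K_{X'}|$, $|5K_{X'}|$ are \emph{not} composed with the same pencil, (\ref{j6}) gives $K_X^3\ge 2/(5m_1\cdot 6)$, which exceeds $1/135$ as long as $m_1\le 9$. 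On the other hand, if $|m_1K_{X'}|$ is composed with the same pencil, then $\theta_{m_1}\ge P_{m_1}-1$ and (\ref{i100}) applied with $m_0$ replaced by $m_1$ yields $K_X^3\ge (P_{m_1}-1)^3/(m_1(m_1+P_{m_1}-1)^2)$, which together with Corollary~\ref{pm=3} eliminates all configurations where $P_{m_1}$ is moderately large for a small $m_1$ (e.g.\ $P_5\ge 3$, $P_6\ge 3$, $P_7\ge 4$).

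These reductions leave only finitely many numerical profiles $(P_2,\ldots,P_m,\ldots)$ for $X$. The main task is then to feed these profiles into the weighted-basket machinery of \cite[Chapter 3]{Explicit1}: with $\chi(\OO_X)=1$ and the above bounds on $P_m$, the quantities $n_{1,r}^0$ and $n_{1,r}^5$ constrain the initial basket $B^{(5)}$ to a short list of candidates (analogous to Steps~3--5 in the proof of Proposition~\ref{delta=4}). For each candidate basket, one either checks $K^3(B^{(5)})\geq 1/135$ directly, or applies the inequality $\sigma_5\leq \epsilon\leq 2\sigma_5$ together with the bounds on $P_7$ (and possibly higher $P_m$) to derive a contradiction. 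The hard part — and the bulk of the argument — is precisely this combinatorial enumeration: a priori there are many baskets with $K^3$ dropping below $1/135$, and ruling each one out requires carefully combining the positivity constraints on the indices $r_i$ with the packing-and-prediction inequalities of \cite{Explicit1}, so that after packings only baskets with $K^3\ge 1/135$ remain geometric.
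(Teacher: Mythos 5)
Your reduction is correct and follows the same route as the paper: the Section~3 estimates leave only the case $P_5=2$, $d_5=1$, $b=0$, $\theta=1$, $F$ a $(1,0)$ surface with the fallback bound $K_X^3\geq \frac{1}{180}$ from (\ref{i100}); then $\chi(\OO_X)=1$, the earlier propositions give $P_2,P_3,P_4\leq 1$, and Corollary~\ref{m01} gives $P_6,P_7,P_8\leq 2$. All the numerical checks in that part are right. But the proof is not finished: the entire burden of the statement rests on the basket enumeration that you describe in the last paragraph without carrying out. Saying that the constraints ``constrain $B^{(5)}$ to a short list'' and that ``ruling each one out requires carefully combining the positivity constraints with the packing-and-prediction inequalities'' is a plan, not an argument — as written, nothing rules out a geometric basket with $\frac{1}{180}\leq K^3 < \frac{1}{135}$ in the exceptional case, which is exactly what must be excluded.

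For comparison, the paper's execution of this step is very short. Using \cite[(3.6)--(3.12)]{Explicit1} with $\chi=1$, $P_2=0$, $P_3,P_4\in\{0,1\}$, $P_5=P_7=2$ and $P_4\leq P_6\leq P_8\leq 2$, one finds that among all candidates for $B^{(5)}$ (indexed by $(P_3,P_4,P_6)$ and $\sigma_5$) exactly one has $K^3>0$, namely $B^{(5)}=\{2\times(2,5),\,3\times(1,3),\,(1,4),\,(1,6)\}$ with $K^3=\frac{1}{60}$, arising from $(P_3,P_4,P_6)=(1,1,2)$ and $\sigma_5=2$. Then $P_8=2$ forces the packing $B^{(7)}=\{2\times(2,5),\,2\times(1,3),\,(2,7),\,(1,6)\}$, whose volume $\frac{1}{210}$ is strictly below the geometric lower bound $\frac{1}{180}$ you already established — a contradiction. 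So the exceptional case cannot occur at all, and the minimum $\frac{1}{135}$ is attained in the $(1,1)$-fiber case of Table~A4. Note the role of the a priori bound $\frac{1}{180}$: it is not merely a fallback but the input that kills the surviving basket. You should either perform this enumeration explicitly or at least exhibit the surviving basket and the contradiction; without it the claimed bound is unproved.
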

\begin{proof}  Similarly, we have $K_X^3\geq \frac{1}{135}$ unless $P_5=2$, $b=0$ and $F$ a $(1,0)$ surface, for which we have $K_X^3\geq \frac{1}{180}$.
 Furthermore,  we may assume that $P_m \le 2$ for $m=6,7,8$ by Corollary \ref{m01}.
It suffices to consider: $\chi(\OO_X)=1$, $P_2=0$, $P_3=0,1$,
$P_4=0,1$, $P_5=P_7=2$ and $ P_4 \le P_6 \le P_8 \le 2$.

We look at $B^{(5)}$ with $K^3 >0$ according to $(P_3, P_4, P_6)$ and $\sigma_5$.  It turns out that there is only one,
$$B^{(5)}=\{2\times (2,5), 3\times (1,3), (1,4), (1,6)\}$$ with $K^3(B^{(5)})=\frac{1}{60}$, given by $(P_3, P_4, P_6)=(1,1,2)$ and $\sigma_5=2$.
Now $P_8=2$ and hence
$$B^{(7)}=\{2\times (2,5), 2\times (1,3), (2,7), (1,6)\}.$$
However $K^3(B^{(7)}) = \frac{1}{210} < \frac{1}{180}$, which is impossible.
\end{proof}

\begin{prop}\label{delta=6} If $P_6\geq 2$, then $K_X^3\geq \frac{1}{224}$.
\end{prop}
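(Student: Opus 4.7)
My plan follows the same template as the proofs of Propositions~\ref{delta=4} and~\ref{delta=5}. First I take $m_0 = 6$ and examine all the subcases of Section~3. Using Tables A1--A4, Inequalities~(\ref{j3})--(\ref{i100}), Proposition~\ref{d24}, and Corollary~\ref{pm=3}, one verifies that $K_X^3 \geq \tfrac{1}{224}$ already holds in every subcase \emph{except} the single exceptional configuration
\[
P_6 = 2,\qquad d_6 = 1,\qquad b = g(\Gamma) = 0,\qquad F \text{ is a } (1,0)\text{-surface},
\]
in which Inequality~(\ref{i100}) with $\theta = 1$ only yields $K_X^3 \geq \tfrac{1}{294}$. (Note that $\tfrac{1}{224}$ is in fact attained by Table A4 in the $(1,1)$-surface subcase, so it is the correct target value.) Without loss of generality we may also assume $\delta(X) = 6$, since $\delta(X) \leq 5$ is covered by Propositions~\ref{delta=2}--\ref{delta=5}; hence $P_m \leq 1$ for $m \leq 5$.

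In the exceptional configuration, \cite[Lemma 2.32]{Explicit2} gives $\chi(\OO_X) = 1$, and then \cite[Lemma 3.2]{Explicit2} yields the plurigenus inequality $P_{m+6} \geq P_m + P_6 = P_m + 2$ for all $m \geq 2$. Applying Corollary~\ref{m01} with $m_1 \in \{7, 8, 9, 10\}$, a short arithmetic check shows that both entries of the resulting minimum are $\geq \tfrac{1}{224}$ whenever $P_{m_1} \geq 3$; we may therefore assume $P_{m_1} \leq 2$ for $m_1 = 7, 8, 9, 10$. The plurigenus inequality then immediately forces $P_2 = P_3 = P_4 = 0$, while $P_{12} \geq 4$ and similar lower bounds on $P_m$ for $m \geq 12$ remain automatic.

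At this stage the proof enters the basket analysis phase, parallel to the latter parts of the proofs of Propositions~\ref{delta=4} and~\ref{delta=5}. Using the formulae and non-negativity constraints of \cite[(3.6)--(3.12)]{Explicit1} together with $\chi(\OO_X) = 1$ and the numerical data collected above, I would enumerate all possible initial and packed baskets $B^{(m)}$ for appropriate small $m$ (most likely $m = 5$ and $m = 7$ will suffice). For each candidate basket, one either derives a contradiction from some $n^0_{i,j} \geq 0$, an inconsistency with a known $P_m$, or verifies directly that $K^3(B) \geq \tfrac{1}{224}$.

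The principal obstacle is exactly this enumeration: compared with the $\delta = 5$ case there is one additional free parameter ($P_5 \in \{0, 1\}$), so noticeably more candidate baskets must be inspected and eliminated. Nevertheless, once organised by $(P_5, \sigma_5, \sigma_6, \ldots)$ the check is entirely mechanical, and I expect that only the baskets realising the extremal value $K^3 = \tfrac{1}{224}$ --- expected to come from the numerical shape of the $(1,1)$-surface boundary case of Table A4 --- survive.
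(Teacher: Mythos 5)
Your reduction is correct and coincides with the paper's: with $m_0=6$, Tables A1--A4 and Inequalities (\ref{j0})--(\ref{i100}) give $K_X^3\geq \frac{1}{224}$ in every subcase except $P_6=2$, $d_6=1$, $b=0$, $F$ a $(1,0)$ surface (where (\ref{i100}) only gives $\frac{1}{294}$), and your application of Corollary \ref{m01} with $m_1=7,8,9,10$ to force $P_{m_1}\leq 2$ checks out arithmetically. The gap comes next: you invoke \cite[Lemma 3.2]{Explicit2} in the form $P_{m+6}\geq P_m+P_6$ and conclude $P_2=P_3=P_4=0$ and $P_{12}\geq 4$. That form is not available. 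Throughout this paper the lemma is only ever applied as $P_{m+2}\geq P_m+P_2$ (for $\chi(\OO_X)\leq 1$), and the general superadditivity $P_{m+n}\geq P_m+P_n$ is not a formal consequence of Reid's Riemann--Roch when $\chi(\OO_X)=1$: the correction term $l(m+n)-l(m)-l(n)$ can be negative once $n>2$ (already a single point of type $\frac{1}{7}(1,-1,1)$ gives $l(12)-2l(6)=-\frac{2}{7}$). The paper's own proof is the decisive witness: its case table for this proposition retains configurations with $P_3=1$ or $P_4=1$, which would be impossible if $P_9\geq P_3+P_6$ and $P_{10}\geq P_4+P_6$ held alongside $P_9,P_{10}\leq 2$; and three of those cases (Nos.\ 5, 6, 8, with $(P_3,P_4,P_5)=(0,1,1)$ or $(1,0,1)$) actually survive the analysis, with $K^3\geq\frac{1}{210}$, $\frac{1}{105}$ and $\frac{1}{30}$ respectively. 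The constraints one can legitimately extract are only $P_2=0$, $P_3\leq P_5\leq 1$, $P_4\leq 1$, $P_7\leq P_9\leq 2$ and $P_8=P_{10}=2$.

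The consequence is not cosmetic. Restricting the enumeration to $P_3=P_4=0$ leaves only the two cases with $(P_3,P_4,P_5)=(0,0,0)$ or $(0,0,1)$, both of which are eliminated, so your argument would conclude that the exceptional configuration cannot occur at all --- a statement the paper does not establish and which its surviving cases contradict. The configurations that require the actual work (the packing arguments at levels $5$ and $7$, the $\epsilon_7$ counts, the identification of minimal dominated baskets such as $B_{210}$ and $B_{105}$) are precisely the ones your constraint discards. Since the basket-analysis phase is in any event only sketched rather than executed, the proposal as written does not yet constitute a proof; with the corrected constraint set it becomes essentially the paper's argument. A final small point: the surviving $(1,0)$-fibre baskets all satisfy $K^3\geq\frac{1}{210}>\frac{1}{224}$, so the extremal value $\frac{1}{224}$ arises only from the Table A4 bound in the $(1,1)$ subcase, not from any basket produced by this enumeration.
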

\begin{proof} Similarly, we have $K_X^3\geq \frac{1}{224}$ unless $P_6=2$, $b=0$ and $F$ a $(1,0)$ surface,
for which we have $K_X^3\geq \frac{1}{294}$. Again, we may assume
that $P_m \le 2$ for $m=7, 8, 9,10$. Therefore, it remains to
consider such a situation that $\chi(\OO_X)=1$, $P_2=0$, $P_4\leq
1$, $P_3 \le P_5 \le 1$, $P_7 \le P_9 \le 2$ and $P_8=P_{10}=2$.
According to the value of $(P_3, P_4, P_5)$ and $\sigma_5$, we
have the following table.

 \medskip
{\tiny
\begin{center}
\begin{tabular}{|c|c|c||c|c|c|c|}
\hline
No. &$(P_3, P_4, P_5)$ &  $\sigma_5$ & $B^{(5)}$ & $K^3$ & $\epsilon+P_7$ \\
\hline
1& (0,0,0) &  0 & $\{ 5 \times (1,2), 4 \times (1,3), (1,4) \}$ & $1/12$ & $2$ \\
\hline
2& (0,0,1) &  0 &  $\{3 \times (1,2), 2*(2,5), 3* (1,3)\}$ &  $1/10$ & $3$ \\
\hline
3& (0,1,0) &  0 & $\{6*(1,2), (1,3), 3* (1,4)\}$ &  $1/12$ & $3$ \\
\hline
4& (0,1,1) &  0 & $\{4*(1,2),2*(2,5), 2* (1,4)\}$ &  $1/10$ & $4$ \\
\hline
5& (0,1,1) &  1 & $\{5*(1,2),1*(2,5), (1,3), (1,4), (1,r)\}$ &  $1/r-7/60$ & $4$ \\
\hline
6& (0,1,1) &  2 & $\{6*(1,2), 2*(1,3), (1,r_1), (1,r_2)\}$ &  $1/r_1+1/r_2-1/3$ & $4$ \\
\hline
7& (1,0,1) &  0 & $\{(2,5), 6*(1,3),(1,4)\}$ &  $1/20$ & $2$ \\
\hline
8& (1,0,1) &  1 & $\{(1,2), 7*(1,3),(1,r)\}$ &  $1/r-1/6$ & $2$ \\
\hline
9& (1,1,1) &  0 & $\{(1,2), (2,5), 3*(1,3),3*(1,4)\}$ &  $1/20$ & $3$ \\
\hline
10& (1,1,1) &  1 & $\{2*(1,2), 4*(1,3),2*(1,4), (1,r)\}$ &  $1/r-1/6$ & $3$ \\
\hline
\end{tabular}
\end{center}}

\medskip

\noindent{\bf 1.} It is clear that No. 2, 3, 4, 9 are not allowed
for $\epsilon=0$ and hence $P_7 \ge 3$.

\noindent{\bf 2.} In No. 1, 7, the baskets allow at most one
packing at level $7$, i,e, $\epsilon_7 \le 1$. However, $P_7=2$ and
$P_8=2$ yield $\epsilon_7 \ge 2$, a contradiction.

\noindent {\bf 3.} Consider No. 10. Since
$K^3=\frac{1}{r}-\frac{1}{6}>0$, it follows that $r=5$. So
$\epsilon=1$ and  $P_7=2$. Then $\epsilon_7=2$ and
$$B^{(7)}=\{2\times (1,2), 2\times (1,3), 2\times (2,7), (1,5)\}.$$
This already implies $\epsilon_8=0$ and so we get $P_9=3$, a
contradiction.

\noindent {\bf 4.}  Consider No. 8. Since $K^3>0$, thus we get
$$B^{(5)}=\{(1,2), 7\times (1,3), (1,5)\}.$$
 Since $B^{(5)}$ allows no further packing, hence
 $K_X^3=\frac{1}{30}$ in this case.

\noindent{\bf 5.} Consider No. 5. Since $K^3>0$, $r=6, 7, 8$. It
is easy to see that the basket with the smallest volume and
dominated by $B^{(5)}$ is
$$B_{210}=\{(7,15), (2,7), (1,6)\}$$
with $K^3=\frac{1}{210}$. Thus $K_X^3\geq \frac{1}{210}$.

\noindent {\bf 6.} Finally Consider No. 6. Since $K^3>0$,
$(r_1,r_2)=(5,5), (5,6), (5,7)$. It is easy to see that the basket
with the smallest volume and dominated by $B^{(5)}$ is
$$B_{105}=\{6\times (1,2), 2\times (1,3), (1,5), (1,7)\}$$
with $K^3=\frac{1}{105}$. Thus $K_X^3\geq \frac{1}{105}$.
\end{proof}

Note that, when $\delta(X)\geq 7$, we can  utilize our explicit classification in \cite[Section 3]{Explicit2}. We shall omit some details to avoid unnecessary redundancy.

\begin{prop}\label{delta=7} If $P_7\geq 2$, then $K_X^3\geq \frac{1}{336}$.
\end{prop}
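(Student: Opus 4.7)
The plan is to follow the template established in Propositions \ref{delta=2}--\ref{delta=6}. Set $m_0=7$ and $\Lambda=|7K_{X'}|$, and run through the possibilities for $d_7$, and, when $d_7=1$ and $b=0$, for the type of $F_0$. Tables A1, A2, A3, A4, Inequalities (\ref{j3}), (\ref{j5}), (\ref{i10}), (\ref{i11}), and Corollary \ref{pm=3} all yield $K_X^3 \geq \frac{1}{336}$ directly in every situation except $P_7=2$, $d_7=1$, $b=0$, with $F_0$ of type $(1,0)$. Indeed, for $m_0=7$ the critical bound $\frac{1}{336}$ already matches the entry for the $(1,1)$ subcase in Table A4, and the $(1,2)$ subcase in Table A3 gives $\frac{1}{308}$, while Subcase \ref{=0}.1 with $\theta\geq 1$ gives $\frac{1}{224}$. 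Only Subcase \ref{=0}.4 is borderline: Inequality (\ref{i100}) with $\theta=1$ yields merely $K_X^3 \geq \frac{1}{448}$.

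Thus it suffices to treat the exceptional configuration $P_7=2$, $d_7=1$, $b=0$, $F_0$ of type $(1,0)$. By \cite[Lemma 2.32]{Explicit2} this forces $\chi(\OO_X)=1$, and by the definition of $\delta(X)=7$ one has $P_m\leq 1$ for $m=1,\ldots,6$. Next, apply Corollary \ref{m01} with $m_0=7$ and $m_1=8,9,10,11,\ldots$ in turn: each value of $P_{m_1}\geq 3$ triggers a lower bound strictly greater than $\frac{1}{336}$, so we may assume $P_{m_1}\leq 2$ for all $m_1$ in a suitable moderate range beyond $7$. These constraints, combined with the formal-basket formulae in \cite[(3.6)--(3.12)]{Explicit1}, pin down the initial basket $B^{(5)}$ to a short list parametrized by $(P_3,P_4,P_5)\in\{0,1\}^3$ and $\sigma_5\in\mathbb{Z}_{\geq 0}$.

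The main step is then to enumerate this short list in the style of Propositions \ref{delta=5} and \ref{delta=6}. For each candidate $B^{(5)}$, one checks the compatibility constraints $\sigma_5\leq\epsilon\leq 2\sigma_5$ and the prescribed values of $P_7,P_8,P_9,\ldots$ (which constrain $\epsilon,\epsilon_6,\epsilon_7$); entries with $K^3(B^{(5)})\leq 0$ or with incompatible $\epsilon$ are discarded. In each surviving branch, a finite search over admissible further packings identifies the basket of smallest volume, and one verifies that this minimum is at least $\frac{1}{336}$. The main obstacle is the sheer combinatorial bookkeeping: the margin between the baseline $\frac{1}{448}$ and the target $\frac{1}{336}$ is narrow, so a single overlooked packing could ruin the bound, and every branch has to be treated exhaustively. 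No new technical tool beyond the machinery of Sections 2--3 and the basket calculus of \cite{Explicit1} is required.
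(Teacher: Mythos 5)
Your reduction step is correct and coincides exactly with the paper's: for $m_0=7$ every configuration except $P_7=2$, $d_7=1$, $b=0$ with $F_0$ a $(1,0)$ surface already gives $K_X^3\geq \frac{1}{336}$ (with the $(1,1)$ entry of Table A4 being the tight one), the remaining case forces $\chi(\OO_X)=1$, and Corollary \ref{m01} lets one assume $P_8, P_9\leq 2$ (note, though, that this corollary only beats $\frac{1}{336}$ for $m_1\leq 12$, so the ``moderate range'' you invoke is genuinely short). The problem is what comes after: you describe the decisive enumeration --- list the admissible $B^{(5)}$'s, run through all packings, check that the minimal volume is $\geq\frac{1}{336}$ --- but you do not carry it out. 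For a proposition whose entire content in the hard case is the outcome of that finite search, and where, as you yourself note, the margin between the baseline $\frac{1}{448}$ and the target $\frac{1}{336}$ is narrow enough that one overlooked packing would sink the bound, this is a genuine gap rather than a routine omission. (A smaller imprecision: $B^{(5)}$ is not determined by $(P_3,P_4,P_5)$ and $\sigma_5$ alone; $P_6$ enters as well, and the identity $P_4+P_5+P_6=P_3+P_7+\epsilon$ from \cite[(3.10)]{Explicit1} cuts your $\{0,1\}^3$ down to four admissible tuples $(P_3,P_4,P_5,P_6)$.)

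It is also worth recording that the paper does not redo the enumeration in the style of Propositions \ref{delta=5} and \ref{delta=6}. Instead it observes that the four surviving tuples $(P_3,P_4,P_5,P_6)=(0,0,1,1),(0,1,0,1),(0,1,1,1),(1,1,1,1)$ are precisely Cases IV, V, VI, VIII of the classification already carried out in \cite[Section 3]{Explicit2}, whose minimal positive baskets are tabulated there. This reduces the whole combinatorial search to excluding three explicit dangerous minimal baskets $B_{6,4}$, $B_{6,6}$, $B_{8,3}$ with $K^3<\frac{1}{336}$, which is done using $P_9(B_X)=2$ against $P_9(B_{\min})=3$ in the first two cases and the argument of \cite[Theorem 3.11]{Explicit2} in the third. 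If you want to complete your version, either execute your from-scratch enumeration in full or, more efficiently, route the argument through that existing classification as the paper does.
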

\begin{proof} Similarly, we have $K_X^3\geq \frac{1}{336}$ unless
$P_7=2$, $b=0$,  $F$ a $(1,0)$ surface and $\chi(\OO_X)=1$.
Again, we may assume that $P_m\le 2$ for $m=8, 9$. Hence $P_9=2$
and $P_2=0$.

By $\epsilon_6=0$, we have $P_4+P_5+P_6=P_3+2 + \epsilon$. Hence
$(P_3,P_4,P_5,P_6)=(0,0,1,1), (0,1,0,1), (0,1,1,1)$ or $(1,1,1,1)$
which corresponds to Cases IV, V, VI, and VIII in \cite[Section
3]{Explicit2} respectively.  The classification implies that, if
$K_X^3<\frac{1}{336}$, then $B_X\succeq B_{\min}$, where
$B_{\min}$ is a minimal positive basket and belongs to one of the
following:

\begin{itemize}
\item[(b1)] $ B_{6,4}=\{(1,2), (6,13), (1,3), 2\times(1,5)\}$ with
$K^3(B_{6,4})=\frac{1}{390}$ and $P_9(B_{6,4})=3$;

\item[(b2)]  $B_{6,6}=\{3\times (1,2), (3,7), (2,5),(1,4),
(1,6)\}$ with $K^3(B_{6,6})=\frac{1}{420}$ and $P_9(B_{6,4})=3$;

\item[(b3)]  $B_{8,3}=\{2\times (2,5),(1,3), (3,11), (1,4)\}$ with
$K^3(B_{8,3})=\frac{1}{660}$.
 \end{itemize}

Clearly, Case b1 can not happen because $P_9(B_X) \ge
P_9(B_{\min})=3$.

In the Case b2, for the similar reason, $B_X\neq B_{6,6}$. Thus
$B_X\succeq B_{60}:=\{4\times (1,2), 2\times (2,5),(1,4), (1,6)\}$
and so $K_X^3\geq K^3(B_{60})=\frac{1}{60}$.

{}Finally, in Case b3, the proof of \cite[Theorem 3.11]{Explicit2}
implies that $B_X\neq B_{8,3}$ and $B_X\succeq B_{210}=\{2\times
(2,5),(1,3), (2,7), 2\times (1,4)\}$ with $K_X^3\geq
K^3(B_{210})=\frac{1}{210}$. We have proved the statement.
\end{proof}

It is now  immediately to see the following consequences:

\begin{cor}\label{<420} (=Corollary \ref{<<420}) Let $X$ be a minimal projective 3-fold of general type with $K_X^3<\frac{1}{336}$. Then $\delta(X)\geq 8$.
\end{cor}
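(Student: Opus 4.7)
The plan is to prove the contrapositive: if $\delta(X)\le 7$ then $K_X^3\ge \tfrac{1}{336}$. Essentially no new work is needed; the corollary is a bookkeeping consequence of the bounds already established case by case in Theorem \ref{v(x)}, so the genuine ``main obstacle'' has already been dispatched by Propositions \ref{delta=2}--\ref{delta=7}. What remains is only to check that the table of values $v(x)$ attains its minimum at the right-hand end of the relevant range, and to rule out the separate case $\delta(X)=1$.

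First I would split according to $\delta(X)$. For each integer $\delta(X)\in\{2,3,4,5,6,7\}$, Theorem \ref{v(x)} directly yields $K_X^3\ge v(\delta(X))$. Reading off the table, the values
\[
v(2)=\tfrac{1}{14},\ v(3)=\tfrac{1}{36},\ v(4)=\tfrac{1}{90},\ v(5)=\tfrac{1}{135},\ v(6)=\tfrac{1}{224},\ v(7)=\tfrac{1}{336}
\]
are strictly decreasing in $\delta(X)$, so in every one of these six cases $K_X^3\ge v(7)=\tfrac{1}{336}$. The only remaining possibility is $\delta(X)=1$, i.e.\ $p_g(X)\ge 2$; in that case a far stronger bound holds by the earlier work of Chen--Chen (\cite{IJM}, \cite{MA}) where $p_g\ge 2$ is treated exhaustively, and in any event the inequalities of Section 3 applied with $m_0=1$ (for instance (\ref{j3}) or (\ref{j5})) already give a constant much larger than $\tfrac{1}{336}$.

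Combining the two alternatives, $\delta(X)\le 7$ forces $K_X^3\ge\tfrac{1}{336}$. Contraposing gives the stated conclusion. The only point worth writing out explicitly in the actual proof is the monotonicity of $v$ on $\{2,\dots,7\}$ — everything else is a direct quotation of Theorem \ref{v(x)}.
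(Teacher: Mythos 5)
Your proposal is correct and matches the paper's intent exactly: the paper gives no written proof, introducing the corollary with ``It is now immediately to see,'' precisely because it follows from reading off Propositions \ref{delta=2}--\ref{delta=7} (equivalently the table in Theorem \ref{v(x)}) and observing that $v(7)=\tfrac{1}{336}$ is the smallest value in the range $2\le\delta\le 7$, with the case $\delta(X)=1$ (i.e.\ $p_g\ge 2$) disposed of by the much stronger bounds of \cite{IJM, MA}. The only cosmetic caveat is that citing (\ref{j3}) or (\ref{j5}) alone with $m_0=1$ does not cover every subcase of Section 3, so for $\delta(X)=1$ it is cleaner to rely on the quoted references, as the paper does.
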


\begin{prop}\label{delta=8-12}  Let $X$ be a minimal projective 3-fold of general type.
\begin{enumerate}
\item If $P_{8}\geq 2$, then $K_X^3\geq \frac{1}{504}$.
\item If $P_9\geq 2$, then $K_X^3\geq \frac{1}{675}$.
\item If $P_{10}\geq 2$, then $K_X^3\geq \frac{3}{2750}$.
\item If $P_{11}\geq 2$, then $K_X^3\geq \frac{1}{1188}$.
\item If $P_{12}\geq 2$, then $K_X^3\geq \frac{1}{1560}$
\end{enumerate}
\end{prop}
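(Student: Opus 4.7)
The plan is to treat each value $m_0 \in \{8,9,10,11,12\}$ in the same spirit as Propositions \ref{delta=3}--\ref{delta=7}. For a given $m_0$, first apply the general estimation from Section 3: Tables A1--A4 together with Inequalities (\ref{j3}), (\ref{j5}), (\ref{i10}), (\ref{i11}), (\ref{i100}) and Corollary \ref{pm=3} all yield bounds that meet or exceed the stated $v(m_0)$ except in the single ``exceptional'' configuration where $P_{m_0}=2$, $d_{m_0}=1$, $b=g(\Gamma)=0$ and the general fiber $F$ of the induced fibration is a minimal $(1,0)$ surface. Thus we may assume we are in this configuration; then $\chi(\OO_X)=1$ by \cite[Lemma 2.32]{Explicit2} and Inequality (\ref{i100}) already gives a preliminary (but insufficient) bound.

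In this exceptional configuration I would then invoke Corollary \ref{m01} with the given $m_0$: whenever $P_{m_1}\geq 3$ for some nearby $m_1$, one gets $K_X^3\geq \min\{\frac{(P_{m_1}-1)^3}{m_1(m_1+P_{m_1}-1)^2}, \frac{2}{m_0 m_1 (m_0+1)}\}$, which for a suitable range of $m_1$ already exceeds $v(m_0)$. This forces $P_{m_1}\leq 2$ for a whole block of values around $m_0$, and combined with $\chi(\OO_X)=1$, $P_2=0$ (easy), and Reid's plurigenus formula, severely restricts $(P_3,P_4,P_5,P_6)$.

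Next, since $\delta(X)\geq 8$ in all cases here, I would feed the remaining numerical possibilities into the classification of \cite[Section 3]{Explicit2}, which for $\delta(X)\geq 7$ reduces to Cases IV, V, VI, VIII. For each allowed $(P_3,\dots,P_6)$ and each choice of $\sigma_5$ one writes down $B^{(5)}$ (and $B^{(7)}$ if needed), as in the proof of Proposition \ref{delta=7}. Then for each such initial basket one enumerates all admissible packings $B_X \succeq B^{(5)}$ (finitely many, since $K^3(B)>0$ forces the singularity indices to lie in a bounded range), and searches among the minimal baskets for one with $K^3(B_{\min}) < v(m_0)$. If such $B_{\min}$ appears, one checks it against the constraints coming from $P_{m_0}=2$ and from the forced small values of $P_7,P_8,\dots$ (as in the $B_{6,4}$, $B_{6,6}$, $B_{8,3}$ discussion of Proposition \ref{delta=7}); typically these plurigenus constraints rule out the small-volume basket, leaving the next admissible one, whose volume is exactly $v(m_0)$.

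The main obstacle will be the bookkeeping of case~(3) (the value $m_0=10$, where the sharp bound $\tfrac{3}{2750}$ is realized by a special basket related to $B_{7a}$ from Theorem \ref{M2}) and case~(5) (the value $m_0=12$, where the admissible list of $(P_3,\dots,P_6)$ is longer and one has to repeatedly use $P_{m_0}=2$ together with Corollary \ref{m01} for several different $m_1$'s to kill spurious small-volume candidates). In each case the expected pattern is the same as in Proposition \ref{delta=7}: all candidate minimal baskets with $K^3 < v(m_0)$ are eliminated by either a plurigenus constraint $P_k(B_{\min}) > P_k(X)$, or by the classification of $B^{(5)}/B^{(7)}$ which forbids the corresponding packing. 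The surviving minimal basket then realizes $K^3$ equal to $v(m_0)$, which is precisely the stated lower bound.
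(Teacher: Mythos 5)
Your reduction to the exceptional configuration ($P_{m_0}=2$, $d_{m_0}=1$, $b=0$, $F$ a $(1,0)$ surface, hence $\chi(\OO_X)=1$) is exactly the paper's first step, and your observation that Tables A1--A4 and Inequalities (\ref{j3}), (\ref{j5}) handle every other case is correct: indeed the five stated constants $\frac{1}{504},\frac{1}{675},\frac{3}{2750},\frac{1}{1188},\frac{1}{1560}$ are precisely the Table A4 entries for $m_0=8,\dots,12$, i.e.\ they come from the $(1,1)$-fiber subcase, not from any basket in the exceptional configuration. But from that point on your plan diverges from what is needed and leaves a genuine gap. The paper finishes in one line: in the exceptional configuration $\chi(\OO_X)=1$, and by \cite[Theorem 1.2(2)]{Explicit2} a minimal $3$-fold with $\chi(\OO_X)=1$ already satisfies $K_X^3\geq \frac{1}{420}$, which is larger than all five target bounds (this is exactly why no basket analysis is required here, in contrast to Proposition \ref{delta=7}, where the target $\frac{1}{336}$ exceeds $\frac{1}{420}$ and the harder work is unavoidable). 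You never invoke this, and instead you defer the entire content of the proof to an unexecuted enumeration of $B^{(5)}$, $B^{(7)}$ and their packings for five values of $m_0$, explicitly flagging the ``bookkeeping'' as the main obstacle. A plan of that kind is not a proof, and in this instance it is also misdirected.

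Two concrete misstatements reinforce this. First, you expect ``the surviving minimal basket'' in the exceptional case to ``realize $K^3$ equal to $v(m_0)$''; that cannot happen, since every $3$-fold in that case has $K^3\geq\frac{1}{420}>v(m_0)$, and the constants are attained (if at all) in the $(1,1)$-fiber branch governed by Table A4. Second, the bound $\frac{3}{2750}$ for $P_{10}\geq 2$ is simply the $m_0=10$ entry of Table A4 and has nothing to do with $B_{7a}$ (whose volume $\frac{1}{1680}$ belongs to the $\delta=14$ classification). Your proposed route via the Case IV/V/VI/VIII classification of \cite[Section 3]{Explicit2} could in principle be pushed through, but as written the proof is incomplete where it matters and is built on an incorrect expectation of where the extremal values arise.
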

\begin{proof} We only prove (1). Other statements can be proved similarly.

When $P_8\geq 2$, Table A1, Table A2,  Inequalities (\ref{j3}). (\ref{j5}), Table A3  and Table A4 imply $K_X^3\geq \frac{1}{504}$ unless we are in Subcase \ref{=0}.4, for which  one has
$K_X^3\geq \frac{1}{420}$ by \cite[Theorem 1.2(2)]{Explicit2} since $\chi(\OO_X)=1$.
\end{proof}

Propositions \ref{delta=2}, \ref{delta=3}, \ref{delta=4}, \ref{delta=5}, \ref{delta=6}, \ref{delta=7} and \ref{delta=8-12} imply Theorem \ref{v(x)}.
\smallskip

An interesting by-product is the following:

\begin{cor}\label{pg=1}(=Corollary \ref{pg1}(1)) Let $X$ be a minimal projective 3-fold of general type with $p_g(X)=1$. Then $K_X^3\geq \frac{1}{75}$.
\end{cor}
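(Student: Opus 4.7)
The strategy is a case analysis on $\delta(X)$. Since $p_g(X)=1<2$, by definition $\delta(X)\geq 2$. When $\delta(X)\in\{2,3\}$, Theorem~\ref{v(x)} immediately yields $K_X^3\geq v(3)=1/36>1/75$, so we may assume $\delta(X)\geq 4$.

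For $\delta(X)\geq 4$, the decisive observation is that the extremal subcase in the proofs of Propositions~\ref{delta=4}--\ref{delta=8-12}, namely Subcase~\ref{=0}.4 in which the $\delta$-canonical pencil $f\colon X'\to\mathbb{P}^1$ has general fiber $F$ a numerical Godeaux ($=(1,0)$) surface, is ruled out by $p_g(X)=1$. Indeed, applying Leray to $f$ together with $q(F)=p_g(F)=0$ gives $R^1f_*\OO_{X'}=R^2f_*\OO_{X'}=0$, forcing $p_g(X')=h^3(X',\OO_{X'})=0$, contradicting the hypothesis. With this worst subcase eliminated, the remaining bounds from Tables A1--A4, Inequality~(\ref{j5}) (for $\delta\leq 12$), and the classification in Theorem~\ref{M1} together with Appendix Tables F--0, F--1, F--2 (for $\delta\geq 13$) have to be collected and compared against the target $1/75$.

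The main obstacle is the residual Subcase~\ref{=0}.3 ($F$ of type $(1,1)$) for small $\delta$, where Table A4 only provides $K_X^3\geq 1/80$ for $\delta=4$ and worse for $\delta=5$, both short of $1/75$. To close this gap I would refine this subcase under $p_g(X)=1$: a Leray computation for a $(1,1)$ fibration pins down $R^2f_*\OO_{X'}\cong\OO_{\mathbb{P}^1}(-2)$, equivalently $f_*\omega_{X'/\mathbb{P}^1}\cong\OO_{\mathbb{P}^1}(2)$, and the canonical bundle formula combined with a Kawamata extension argument of the type used in Lemma~\ref{cr} sharpens the effective pencil degree $\theta_\Lambda$, which in turn improves the estimates for $\xi$ and $K_X^3$ across the relevant range. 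A parallel (slightly more delicate) argument handles Subcase~\ref{=0}.2 ($F$ of type $(1,2)$), and for $\delta\geq 13$ the finite basket enumeration provided by Theorem~\ref{M1} and the Appendix tables reduces the verification to a direct inspection, completing the proof.
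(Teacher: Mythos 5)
There is a genuine gap. Your reduction is sound up to a point: $\delta\leq 3$ is settled by Theorem \ref{v(x)}, and your exclusion of Subcase \ref{=0}.4 (a fiber with $p_g(F)=0$ forces $R^2f_*\OO_{X'}=0$, hence $p_g(X')=0$) is correct. But the cases you then defer are exactly where the work lies, and the mechanism you propose for them does not close the numerical gap. Knowing $f_*\omega_{X'/\mathbb{P}^1}\cong\OO_{\mathbb{P}^1}(2)$ for a $(1,1)$-fibration gives no new lower bound on $\theta_\Lambda$ for $\Lambda\subset|m_0K_X|$: that invariant is controlled by $P_{m_0}$, and when $P_{m_0}=2$ one still only has $\theta=1$, so Inequality (\ref{i11}) still yields $1/80$ at $m_0=4$ and $1/135$ at $m_0=5$, while the $(1,2)$ subcase at $m_0=5$ gives only $1/120$. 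Moreover your scheme leaves $6\leq\delta\leq 12$ entirely unaddressed, where the tables give bounds far below $1/75$; no Kawamata-extension refinement of $\xi$ will recover a factor of two or more there.

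The paper's proof runs on a different axis: it splits on the value of $P_4$ rather than on $\delta$. If $P_4\geq 3$, Corollary \ref{pm=3} gives $K_X^3\geq 3/160$. If $P_4=2$ and the fiber is of type $(1,1)$ or $(1,0)$, then $h^2(\OO_X)=0$, so $\chi(\OO_X)\leq 0$ and Reid's Riemann--Roch forces $P_5\geq 3$, whence Corollary \ref{m01} (with $m_0=4$, $m_1=5$) gives $1/50$. If $P_4=1$, the formulae of \cite{Explicit1} force $P_5\geq 3$ --- so in particular $\delta(X)\leq 5$ whenever $p_g(X)=1$, making your cases $\delta\geq 6$ and the $\delta\geq 13$ basket inspection vacuous --- and also $\chi(\OO_X)\geq 3$; then a Leray computation on the fibration induced by a sub-pencil of $|5K_X|$, using $p_g(X')=1$ to bound the degrees of the summands of $f_*\omega_{X'}$, shows $p_g(F)\geq 4$ when $q(F)=0$, hence $K_{F_0}^2\geq 4$ by the Noether inequality. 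It is this step, not a refinement of $\theta_\Lambda$, that eliminates the $(1,2)$, $(1,1)$ and $(1,0)$ fiber types in the critical case; Inequalities (\ref{j5}) and (\ref{j6}) then give $16/245$ and $1/75$. These arithmetic inputs ($P_5\geq 3$, $\chi\geq 3$, and the Noether step) are absent from your proposal and are not supplied by the canonical-bundle-formula argument you sketch.
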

\begin{proof}

We distinguish the following cases.

\noindent {\bf Case 1.}  $P_4\geq 3$. \\
By Corollary \ref{pm=3}, $K_X^3 \ge \frac{3}{160}$.

\noindent {\bf Case 2.} $P_4=2$. \\
   We have $K_X^3 \geq
\frac{1}{70}$  by Inequalities (\ref{j3}), (\ref{j5}) and Table A3
unless $b=0$ and $F$ is either a $(1,1)$ or a $(1,0)$ surface, for
which we necessarily have $h^2(\OO_X)=0$ and thus $\chi(\OO_X)=0$.
Reid's Riemann-Roch formula implies $P_5>P_4=2$. Now Corollary
\ref{m01}(with $m_0=4$, $m_1=5$) yields $K_X^3\geq \frac{1}{50}$.


\noindent {\bf Case 3.}  $P_4=1$.\\
Since $p_g(X)=1$, one has $P_m>0$ for all $m>1$. By
\cite[(3.10)]{Explicit1}, we have
$$P_4+P_5+P_6=3P_2+P_3+P_7+\epsilon\geq 3P_2+P_3+P_7.$$
If $P_4=1$ (which implies $P_3=P_2=1$), then we have
$$P_5\geq (P_7-P_6)+3\geq 3.$$

 Then, from \cite[(3.6)]{Explicit1},  $n_{1,4}^0\geq 0$ implies $\chi(\OO_X)\geq 3$. Due to our previous result \cite[Corollary 1.2]{JLMS} for irregular 3-folds, we may assume $q(X)=0$. Thus we have
$h^2(\OO_X)=\chi(\OO_X)\geq 3$. Take a sub-pencil $\Lambda$ of
$|5K_{X}|$. Then $\Lambda$ induces a fibration $f:X'\lrw \Gamma$
after Stein factorization. Let $F$ be the general fiber and $F_0$
be the minimal model of $F$.

\noindent{\bf Claim.} $K_{F_0}^2 \ge 2$.
\begin{proof} Clearly we may write
$$f_*\omega_{X'}=\OO_{\Gamma}\oplus \OO_{\Gamma}(e_2)\oplus\cdots\oplus
\OO_{\Gamma}(e_{p_g(F)-1})$$
with $-2\leq e_j\leq -1$ for all $j$, since $p_g(X')=1$.
Note that we have
\begin{eqnarray*}
h^2(\OO_X)&=&h^1(f_*\omega_{X'})+h^0(R^1f_*\omega_{X'})\\
&\leq &(p_g(F)-1)+h^0(R^1f_*\omega_{X'}). \end{eqnarray*} If
$q(F)>0$, we have $K_{F_0}^2\geq 2$ by the surface theory. If
$q(F)=0$, we have $R^1f_*\omega_{X'}=0$ and thus $p_g(F)\geq
h^2(\OO_X)+1\geq 4$. Hence we have $K_{F_0}^2\geq 4$ by the
Noether inequality.
\end{proof}
If $d_5\geq 2$, then  we may set $m_1=5$ and apply Inequality
(\ref{j6}), which  gives $K_X^3\geq \frac{1}{75}$.

If $d_5=1$, then $|5K_{X'}|$ and $\Lambda$ are composed with the
same pencil. Thus we have $\theta_5 \ge 2$ and Inequality
(\ref{j5}) gives $K_X^3\geq \frac{16}{245}$.
\end{proof}
\section{\bf Threefolds with $\delta(V)\geq 13$}


Let $X$ be a minimal projective 3-fold of general type with
$\delta(X) \ge {13}$.
Now we are in the natural position to classify baskets ${\mathbb B}(X)$ with $\delta(X)\geq 13$.  In fact, we have ${\mathbb B}^{12}\succeq {\mathbb B}(X)\succeq {\mathbb B}_{\textrm{min}}$ for certain minimal positive basket ${\mathbb B}_{\textrm{min}}$ listed in \cite[Table C]{Explicit2}, where ${\mathbb B}^{12}$ is also listed there.
However, as pointed
out in \cite[Proposition 4.5]{Explicit2}, our earlier classification in  \cite[Table C]{Explicit2} is not clean  since some minimal baskets in
Table C are actually known to be ``non-geometric''.

Recall that, by definition,  a geometric weighted basket is a basket of a
projective threefold of general type. Hence the
following properties hold:
\begin{itemize}
\item[A.] $P_{m}P_{n} \le P_{m+n}$  if  $P_{m}=1$ and $n>0$.
\item[B.] $P_m\geq 0$ for all $m>0$.
\item[C.] $K^3 \ge f(m_0)$ for some explicit function $f(x)$ given in Sections 3 and 4 provided that $P_{m_0} \ge 2$.
\end{itemize}

Indeed, if  ${\mathbb B}^{12}$  violates one of $A,B,C$,  then so does ${\mathbb B}(X)$. Therefore ${\mathbb B}(X)$ is non-geometric.
If ${\mathbb B}_{\textrm{min}}$ is non-geometric (e.g. cases No.  3a, 5b, 10a, $\cdots$, etc.), then we need to check all baskets between  ${\mathbb B}^{12}$ and ${\mathbb B}_{\textrm{min}}$.
The following Table H consists of non-geometric baskets with $\delta \ge 13$.
We keep the same notation as in Table C.




\medskip


\centerline{\bf \underline{Table H}}

\noindent{\tiny
$$
\begin{array}{|l|c|c|c|l|}
\hline
No. & ( P_{12},\cdots, P_{24}) &  (n_{1,2},n_{4,9},\cdots,n_{1,5}) \text{ or } B_{min} & K^3 & \text{Offending}\\
\hline
3a& ( 1,0,0,1,0,0,2,0,3,1,1,1,3 )  &\{(2,5),(3,8),*\} \succ \{(5,13),* \} & \frac{17}{30030}& P_8P_8>P_{16}\\

5b& ( 1,0,1,2,0,0,3,0,2,1,2,2,3 )  & \{(5,13),(4,15),* \} & \frac{1}{1170}& P_8P_8>P_{16}\\

8& (1,0,2,1,0,1,3,1,4, 3, 2, 2, 5 ) &  (7,1,0,1,0,2,0,0,6, 0,2,0,0,0,1)& \frac{1}{770}& P_6P_{10} >P_{16} \\

9& (1, 0, 2, -1,1,0,2,0,1,2,1, 0, 2 ) &  (9,0,0, 2,0,0, 1,1,4, 0,1,0,0, 1, 0) & \frac{1}{5544}&P_{15}=-1 \\

\end{array}$$}

\noindent{\tiny
$$
\begin{array}{|l|c|c|c|l|}
10a&(1,0,2,1,2,-1,2,0,2,2,1,2,4)&\{(4,9),(3,7),*\} \succ \{(7,16),* \} & \frac{1}{1680}& P_{17}=-1\\

11a&(1,0,2,0,2,0,2,2,2,1,1,1,3)&\{(3,8),(4,11),*\} \succ \{(7,19),* \} & \frac{1}{2660}&P_8P_{14}>P_{22}\\

13& (1, 0, 3, -1, 1, 1, 3, 1, 3, 3, 3, 1, 4
) &  (12,0,0, 2,0,2, 0,2,4, 0,2,0,0,1,0) & \frac{4}{3465}& P_{15}=-1\\

15a& (1,0,3,0,1,0,2,0,3,1,1,1,4) &\{(4,11),(1,3),*\} \succ \{(5,14),* \} & \frac{1}{2520}& P_8P_{14}>P_{22}\\
15b& (1,0,2,0,1,0,3,0,3,2,1,1,4) &\{(2,5),(3,8),*\} \succ \{(5,13),* \} & \frac{23}{36036}&P_8P_{14}>P_{22}\\

15c& (1,0,3,1,2,0,3,1,3,2,2,2,5)    & \{(7,16),(7,19),* \} & \frac{31}{31920}&P_8P_{14}>P_{22}\\

16c&(1,0,2,1,1,-1,3,-1,2,2,1,1,3) &\{ \{(5,13),(7,16)* \} & \frac{3}{16016} & P_{17}=-1\\

18a& (1, 0, 3, 0, 1, 0, 2, 1, 2, 2, 2, 1, 3)
 &\{(4,11),(1,3),*\} \succ \{(5,14),* \} & \frac{1}{3080}& P_6P_{11}>P_{17}\\

19& (1,0,2,0,1,1,3,0,2,2,2,1,3) & (8,0,1,1,0,1,0,1,5,0,1,0,0,1,0) & \frac{2}{3465}& P_9P_{14}>P_{23}\\

20a& (1, 0, 1, 1, 1, 0, 3, -1, 2, 1, 0, 1, 3)
 &\{(2,5),(3,8),*\} \succ \{(5,13),* \} & \frac{1}{16380}& P_{19}=-1\\

21a&(1, 1, 1, 1, 2, 0, 2, 1, 2, 1, 2, 2, 3)
  &\{(1,3),(3,10),*\} \succ \{(4,13),* \} & \frac{1}{4680}& P_8P_9>P_{17}\\

22&(1, 0, 1, 1, 1, 0, 2, 1, 3, 1, 1, 1, 3
 ) &  (7,1,0, 1,0,1, 1,0,5, 1,0,0,1,0,1) & \frac{1}{9240}&P_8P_9>P_{17}\\

23a&(1,0,2,1,2,0,2,1,3,1,2,2,3) &\{(4,9),(3,7),*\} \succ \{(7,16),* \} & \frac{1}{2640}& P_8P_{9}>P_{17}\\

24&(1, 0, 2, 0, 0, 1, 3, 0, 3, 2, 2, 0, 3
 ) &  (10,1,0, 1,0,3, 0,1,6, 0,2,0,0,1,0)& \frac{1}{3465}& P_8P_8>P_{16}\\

26a&(1,0,3,1,1,1,3,0,4,1,2,2,5 ) &\{(4,11),(1,3),*\} \succ \{(5,14),* \} & \frac{1}{1260}&P_9P_{10}>P_{19}\\

27.1&(1,0,2,2,1,1,5,0,4,3,3,3,6)&\{(2,5),(3,8),*\} \succ \{(5,13),* \}& \frac{71}{45045}&P_9P_{10}>P_{19}\\
27.2&(1,0,2,2,1,1,5,-1,3,2,2,2,4) &\{(2,5),(5,13),*\} \succ \{(7,18),* \} & \frac{1}{1386}&P_{19}=-1\\
27a&(1,0,2,2,1,1,5,-1,3,2,2,2,3) &\{(2,5),(7,18),*\} \succ \{(9,23),* \} & \frac{1}{1386}&P_{19}=-1\\

27b& (1,0,2,2,1,1,5,-1,3,2,2,2,5)& \{(5,13),(5,18),* \} & \frac{1}{1170}&
P_{19}=-1\\

29a&(1,1,3,1,2,2,2,1,3,1,2,2,3) &\{(5,14),(1,3),*\} \succ \{(6,17),* \} & \frac{1}{5335}&P_9P_{14}>P_{23} \\

32b&(1,0,3,1,1,1,3,1,3,2,3,2,4) &\{(4,11),(1,3),*\} \succ \{(5,14),* \} & \frac{1}{1386}&P_9P_{14}>P_{23}\\

33a&  (   1,1,2,0,2,1,1,1,2,2,1,2,3 )  & \{(3,10),(2,7),*\} \succ \{(5,17),* \} & \frac{1}{2856}& P_6P_{16}>P_{22}\\
34b& (1,1,2,0,1,1,3,0,3,3,1,2,4)   &\{(2,5),(3,8),*\} \succ \{(5,13),* \} & \frac{1}{1170}& P_6P_{13}>P_{19}\\

39a&  ( 1,1,2,1,3,0,2,1,3,2,2,3,4)  &\{(4,9),(3,7),*\} \succ \{(7,16),* \} & \frac{1}{1680}&P_6P_{16}>P_{22}\\
39b&  ( 1,1,2,1,3,1,2,1,3,2,2,3,5)  & \{(3,10),(2,7),*\} \succ \{(5,17),* \} & \frac{4}{5355}&P_6P_{16}>P_{22}\\

40.1&( 1,1,2,1,2,1,4,0,4,3,2,3,6)  &\{(2,5),(3,8),*\} \succ \{(5,13),* \} & \frac{41}{32760}&P_6P_{13}>P_{19}\\
40a&( 1,1,2,1,2,1,4,-1,3,2,1,2,4)  &\{(4,10),(3,8),*\} \succ \{(7,18),* \} & \frac{1}{2520}&P_6P_{13}>P_{19}\\
40b& ( 1,1,2,1,2,1,4,0,4,3,1,2,5)  &\{(2,5),(6,16),*\} \succ \{(8,21),* \} & \frac{1}{1260}&P_6P_{13}>P_{19}\\

43a&  (1,1,3,0,2,1,2,1,3,2,2,2,4)  & \{(4,11),(1,3),*\} \succ\{(5,14),* \} & \frac{1}{2520}&P_7P_{8}>P_{15}\\
43b&  (1,1,2,0,2,1,3,1,3,3,2,2,4)   &\{(2,5),(3,8),*\} \succ \{(5,13),* \} & \frac{23}{36036}&P_7P_{8}>P_{15}\\

44a&  ( 1,1,2,1,2,1,4,1,3,4,2,2,4)   & \{(2,5),(6,16),*\} \succ \{(8,21),* \} & \frac{1}{1386}&P_7P_{18}>P_{25}=3\\
44b&  (1, 1, 2, 1, 2, 0, 3, 0, 2, 3, 2, 2, 3
)   & \{(7,16),(5,13),* \} & \frac{3}{16016}& P_7P_{10}>P_{17}\\


46a& (1, 1, 1, 1, 2, 1, 3, 0, 3, 1, 1, 2, 3)  &\{(2,5),(3,8),*\} \succ \{(5,13),* \} & \frac{1}{16380}& P_9P_{10}>P_{19}\\

50a&( 1,1,3,1,2,2,3,1,4,2,3,3,5 )&\{(4,11),(1,3),*\} \succ \{(5,14),* \} & \frac{1}{1260}& P_7P_{14}>P_{21}\\

51a&  ( 1,1,2,2,2,2,5,0,3,3,3,3,4 )  &\{(4,10),(3,8),*\} \succ \{(7,18),* \} & \frac{1}{1386}&P_6P_{13}>P_{19}\\
51b&  ( 1,1,2,2,2,2,5,0,3,3,3,3,5 )   & \{(5,13),(5,18),* \} & \frac{1}{1170}&P_6P_{13}>P_{19}\\

52a&  (1,1,2,1,1,0,2,1,2,2,1,2,3) &\{(2,5),(3,8),*\} \succ \{(5,13),* \} & \frac{1}{2184}& P_5P_{12}>P_{17}\\

56a& ( 1,1,2,2,1,1,2,1,3,2,2,3,3 ) &\{(4,9),(3,7),*\} \succ \{(7,16),* \} & \frac{1}{1680}& P_5P_{14}>P_{19}\\
57&  ( 1,0,2,2,0,1,3,1,3,2,2,2,3 ) &  (3,0,1, 2,0,5, 0,0,4, 0,0,1,0, 0, 0) & \frac{1}{1386} & P_7P_{9}>P_{16}\\

58a&  (1,1,2,2,2,0,2,1,3,2,2,3,4)  &\{(4,9),(3,7),*\} \succ \{(7,16),* \} & \frac{1}{1680}& P_5P_{12}>P_{17}\\

59a&  (1,1,2,1,2,1,2,3,2,2,2,2,3) &\{(3,8),(4,11),*\} \succ \{(7,19),* \} & \frac{1}{2660}& \text{Item C}\\

60a& (1, 1, 1, 2, 1, 1, 3, 0, 3, 1, 1, 2, 3) &\{(2,5),(3,8),*\} \succ \{(5,13),* \} & \frac{1}{16380}& P_9P_{10}>P_{19}\\

61& (1, 1, 1, 2, 1, 1, 2, 2, 3, 2, 2, 2, 3
) & (0,1,0, 1,0, 3,1,0,2,0,0,0,1,0,0 )& \frac{1}{9240} & \text{Item C}\\

62a& (1,1,2,2,2,1,2,2,3,2,3,3,3 )  &\{(4,9),(3,7),*\} \succ \{(7,16),* \} & \frac{1}{2640}& \text{Item C}\\

63&(1, 1, 3, 1, 2, 1, 3, 2, 3, 3, 2, 2, 4
 ) &  (5,0,1, 2,0,1, 1,1,3, 0,1,0,0,0, 1) & \frac{1}{5544} & \text{Item C}\\
\hline
\end{array}$$}
\medskip



By eliminating non-geometric baskets, we obtain a shorter list of baskets, listed in Table F-0, F-1, F-2 in the Appendix.
We summarize some observations from the Tables.


\begin{thm}\label{-13} (=Theorem \ref{M1}) Let $X$ be a minimal projective 3-fold of general type with the weighted basket $\mathbb{B}(X):=\{B_X, P_2,\chi(\OO_X)\}$. If $\delta(X)\geq 13$, then $P_2=0$ and $\mathbb{B}(X)$ belongs to one of the types listed in Tables F--0$\sim$ F--2 in Appendix. Furthermore, the following holds:
\begin{itemize}
\item[(1)] $\delta(X)=18$ if and only if $\mathbb{B}(X)=\{B_{2a},
0, 2\}$ (see Table F--0 for $B_{2a}$) with $K_X^3=\frac{1}{1170}$.

\item[(2)] $\delta(X)\neq
16,17$.

\item[(3)] $\delta(X)=15$ if and only if ${\mathbb B}(X)$
is among one of the cases in Table F--1. One has $K_X^3\geq
\frac{1}{1386}$.

\item[(4)] $\delta(X)=14$ if and only if
${\mathbb B}(X)$ is among one of the cases in Table F--2. One has
$K_X^3\geq \frac{1}{1680}$.

\item[(5)] $\delta(X)=13$ if and only
if $\mathbb{B}(X)=\{B_{41}, 0, 2\}$ (see Table F--0 for $B_{41}$) with $K_X^3=\frac{1}{252}$.
\end{itemize}
\end{thm}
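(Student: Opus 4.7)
The plan is to combine three ingredients: the earlier classification of candidate baskets from \cite[Table C]{Explicit2}, Reid's Riemann--Roch formula used to read off plurigenera from each basket, and the three geometric obstructions (properties A, B, C) recalled just above, with C using the lower bound $K^3 \geq v(\delta(X))$ from Theorem \ref{v(x)} for $\delta(X) \leq 12$.

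First I would start from \cite[Table C]{Explicit2}, which assigns to each value of $\delta(X) \geq 13$ an initial basket $\mathbb{B}^{12}$ and a minimal positive basket $\mathbb{B}_{\min}$ such that $\mathbb{B}^{12} \succeq \mathbb{B}(X) \succeq \mathbb{B}_{\min}$. Only finitely many packing chains lie between $\mathbb{B}^{12}$ and $\mathbb{B}_{\min}$, and these can be enumerated by the algorithm of \cite[Section 4]{Explicit1}. For each candidate basket in this enumeration I would compute the vector $(P_2, P_3, \ldots, P_{24})$ via Reid's formula and run the three tests. A violation of A means some $P_m P_n > P_{m+n}$ with $P_m = 1$; a violation of B means some $P_m < 0$; a violation of C means there is an $m_0 \leq 12$ with $P_{m_0}(B) \geq 2$ and $K^3(B) < v(m_0)$. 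Each such failure would be recorded in Table H together with its offending inequality, exactly as displayed above.

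The surviving baskets are then precisely those compiled in Tables F--0, F--1, F--2 in the Appendix. Items (1)--(5) follow by direct inspection of these tables: the $\delta = 18$ row is the unique entry $\{B_{2a}, 0, 2\}$ with $K_X^3 = 1/1170$; no survivor carries $\delta = 16$ or $17$; the survivors with $\delta = 15, 14, 13$ sort into Table F--1, Table F--2, and the single row $\{B_{41}, 0, 2\}$ respectively; and the claimed $K^3$ lower bounds are read off from the last column of each table. The assertion $P_2 = 0$ drops out of the enumeration rather than needing a separate argument. The main obstacle is bookkeeping rather than conceptual: one must certify that Table H together with Tables F--0, F--1, F--2 exhausts all intermediate baskets produced by the packing algorithm, which is cleanest to implement as a finite computer check whose only nontrivial numerical input is Theorem \ref{v(x)}.
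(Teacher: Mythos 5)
Your proposal is correct and follows essentially the same route as the paper: the authors likewise start from the pairs ${\mathbb B}^{12}\succeq {\mathbb B}(X)\succeq {\mathbb B}_{\min}$ of \cite[Table C]{Explicit2}, enumerate the finitely many intermediate baskets, discard each one violating property A, B, or C (with C supplied by the bounds of Sections 3--4, in particular Theorem \ref{v(x)}), record the eliminated cases with their offending inequalities in Table H, and read statements (1)--(5) off the surviving Tables F--0, F--1, F--2 by inspection. The only difference is one of emphasis, not substance.
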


Theorem \ref{v(x)}, Theorem \ref{-13} and \cite[Theorem 1.4]{MA} imply the following:

\begin{cor}\label{V} (=Theorem \ref{M2}(2)) Let $X$ be a minimal projective 3-fold of general type. Then
$K_X^3\geq \frac{1}{1680}$, and equality holds  if and only if
$\chi(\OO_X)=2$, $P_2=0$ and $B_X=B_{7a}$ or $B_X=B_{36a}$ (cf.
Table F--2).
\end{cor}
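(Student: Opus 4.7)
The plan is to split the argument according to the pluricanonical section index $\delta(X)$, which by Theorem 0 lies in $\{1,\ldots,18\}$, and then to invoke the effective lower bounds already assembled earlier in the paper. Since both $K_X^3$ and $\delta$ are birational invariants, we may work on a minimal model $X$ throughout.

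First I would dispose of the case $\delta(X)=1$, i.e.\ $p_g(X)\ge 2$, by quoting \cite[Theorem 1.4]{MA}; this yields $K_X^3\ge \tfrac{1}{420}$, comfortably above $\tfrac{1}{1680}$, and so this stratum contributes no equality case. For $2\le \delta(X)\le 12$, I would apply Theorem \ref{v(x)}: reading its table, the minimum of $v(\delta)$ over this range is $v(12)=\tfrac{1}{1560}$, which already strictly exceeds $\tfrac{1}{1680}$, so no equality case arises here either.

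For $\delta(X)\ge 13$, I would invoke Theorem \ref{-13}. Its five clauses pin down, stratum by stratum, the minimum of $K_X^3$: exactly $\tfrac{1}{252}$ if $\delta=13$, at least $\tfrac{1}{1680}$ if $\delta=14$, at least $\tfrac{1}{1386}$ if $\delta=15$, nothing at all when $\delta\in\{16,17\}$, and exactly $\tfrac{1}{1170}$ if $\delta=18$. Taking the minimum across all strata yields $K_X^3\ge \tfrac{1}{1680}$, with equality possible only in the $\delta=14$ stratum.

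For the equality assertion I would then inspect Table F--2 and single out the entries whose $K^3$-value equals $\tfrac{1}{1680}$; the classification shows these are precisely $\{B_{7a},0,2\}$ and $\{B_{36a},0,2\}$. All the substantive work — the volume inequalities of Sections 3--4 and the basket-level classification of Section 5, including the non-geometric exclusions compiled in Table H — is already packaged into Theorems \ref{v(x)} and \ref{-13}; modulo these, the corollary reduces to a finite comparison of explicit rational numbers. The only genuine verification step is scanning Table F--2 to confirm that no entry other than $B_{7a}$ and $B_{36a}$ meets the threshold $\tfrac{1}{1680}$; this is the main (and essentially only) obstacle, and it is the reason the entire classification up through Section 5 was needed.
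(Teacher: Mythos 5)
Your proposal is correct and follows exactly the paper's own route: the paper proves this corollary in one line by combining Theorem \ref{v(x)}, Theorem \ref{-13} and \cite[Theorem 1.4]{MA}, which is precisely your stratification by $\delta(X)$ together with the inspection of Table F--2 for the equality cases $B_{7a}$ and $B_{36a}$. No further comment is needed.
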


Theorem \ref{-13}, together with  the explicit calculation,  also  implies the following:

\begin{cor}\label{effective} Let $X$ be a minimal projective 3-fold of general type.  Then,
\begin{itemize}
\item[(1)] if $\delta(X)=13$, $P_m>0$ for all $m\geq 10$;
\item[(2)] if $\delta(X)=14,15,18$,  $P_m>0$ for all $m\geq 20$.
\end{itemize}
\end{cor}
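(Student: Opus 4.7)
The strategy is to reduce to a finite enumeration via Theorem \ref{-13} and then verify positivity of plurigenera basket-by-basket.

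By Theorem \ref{-13}, a minimal 3-fold $X$ with $\delta(X) \in \{13, 14, 15, 18\}$ has a weighted basket $\mathbb{B}(X) = \{B_X, P_2(X), \chi(\mathcal{O}_X)\}$ belonging to one of the finitely many types listed in Tables F--0, F--1, F--2. For each such formal basket, Reid's Riemann--Roch formula (as packaged for formal baskets in \cite[Section 2]{Explicit1}) expresses $P_m$ for all $m \geq 2$ as an explicit function of the basket data $(B_X, P_2, \chi(\mathcal{O}_X))$. In particular, the tables already record the vector $(P_{12}, \ldots, P_{24})$ for each row.

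The first step is to read off directly from the relevant tables that the $P_m$ in the required range are positive: for $\delta(X) = 13$ (only the basket $\{B_{41}, 0, 2\}$), check $P_{10}, P_{11}, \ldots, P_{24} \geq 1$; for $\delta(X) = 18$ (only $\{B_{2a}, 0, 2\}$), and for $\delta(X) = 14, 15$ (the rows of Tables F--2 and F--1 respectively), check $P_{20}, \ldots, P_{24} \geq 1$. In the latter cases one also extracts which smaller $P_m$ are positive (for example $P_{\delta(X)} \geq 2$ by the definition of $\delta$, so in particular $P_{14}$ or $P_{15}$ is available). The second step is to extend positivity beyond $m = 24$. Here I would invoke the elementary fact that if $P_a(X) \geq 1$ and $P_b(X) \geq 1$, then $P_{a+b}(X) \geq 1$ (multiplication of nontrivial sections of $aK_X$ and $bK_X$), so $S_X := \{m : P_m(X) \geq 1\}$ is a numerical semigroup. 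For each basket it therefore suffices to verify that the positive plurigenera already tabulated generate, as a numerical semigroup, all integers $\geq 10$ (respectively $\geq 20$); concretely, knowing $P_m \geq 1$ for $13$ consecutive values combined with $P_{\delta(X)} \geq 2$ gives the closure by the recursion $P_m \geq P_{m - \delta(X)}$ once $m$ is large enough.

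The main obstacle will be the case-by-case bookkeeping for Table F--2 ($\delta(X) = 14$), which is by far the longest list and contains the two extremal baskets $\{B_{7a}, 0, 2\}$ and $\{B_{36a}, 0, 2\}$ with $K_X^3 = 1/1680$; for these the cubic leading term in $P_m$ grows slowly, so one must check carefully that no gap appears in the range $20 \leq m \leq 34$. Nevertheless, all computations are mechanical applications of the plurigenus formula to the explicit basket, and the semigroup closure check is elementary, so no new geometric input beyond Theorem \ref{-13} is needed.
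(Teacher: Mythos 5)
Your proposal is correct and matches the paper's (essentially unstated) argument: the paper derives the corollary from Theorem \ref{-13} ``together with the explicit calculation,'' i.e.\ the finitely many baskets in Tables F--0, F--1, F--2 determine all $P_m$ via Reid's Riemann--Roch, and one checks positivity directly; your semigroup-closure step is a clean way to make the ``for all $m$'' verification finite. Two trivial caveats: Tables F--0/F--1/F--2 list only $(B_X,\chi,K_X^3,\delta)$, not the plurigenus vector (that appears in Table H), so the window of $P_m$'s must indeed be computed from the formula; and the number of consecutive positive values needed for closure is $\delta(X)$ (up to $18$), not $13$.
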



%
%
%
%
%
%
\section{\bf Birationality}

\begin{thm}\label{birat-18} Let $X$ be a minimal projective 3-fold of general type. If $\delta(X)=18$, then $\Phi_m$ is birational for all $m\geq 61$.
\end{thm}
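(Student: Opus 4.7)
The plan is to apply the machinery of Section 2 with $m_0=18$ and $\Lambda=|18K_X|$, the natural minimal choice since $\delta(X)=18$. Theorem \ref{-13}(1) already pins down the weighted basket to $\{B_{2a},0,2\}$ with $K_X^3=\frac{1}{1170}$ and, in particular, $P_{18}=2$. The goal is to verify the hypotheses of Theorem \ref{birat} for $m=61$: distinguishability of generic $S'\neq S''$ and $C'\neq C''$, together with either $\alpha_m>2$ or the weaker $\alpha_m>1$ combined with $C$ non-hyperelliptic.

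First I would quickly dispose of $d_{18}\geq 2$: the bounds of Subsection \ref{d=3} or Proposition \ref{d24} combined with iteration of inequality (\ref{i3}) yield $\xi$ comfortably past the threshold, and Theorem \ref{birat} closes the argument. The substantive case is $d_{18}=1$, giving a fibration $f\colon X'\to\Gamma$ with generic fiber $F$; when $b=g(\Gamma)\geq 1$, Lemma \ref{cr}(i) makes the restriction essentially canonical and the estimates are routine. For $b=0$ I would split on the type of the minimal model $F_0$ of $F$: if $K_{F_0}^2\geq 2$, or $F_0$ is of type $(1,1)$ or $(1,0)$, pick $|G|=|S|_S|$, $|2\sigma^*(K_{F_0})|$, or $\mathrm{Mov}|2K_F|$ respectively, and invoke Lemma \ref{eveni} (exploiting that $C$ is even on $(1,1)$ surfaces) or Lemma \ref{>1} to push $\xi$ above the threshold needed for $\alpha_{61}>2$ (or $>1$ with $C$ non-hyperelliptic).

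The truly delicate case is $F_0$ a $(1,2)$ surface, where the canonical pencil produces a hyperelliptic genus-$2$ curve and the margins are tight. There I would replace $|G|=\mathrm{Mov}|K_F|$ by $|G|=|2\sigma^*(K_{F_0})|$, so that the generic $C$ is a curve of genus $4$, even, and non-hyperelliptic. With $\theta=1$, $\beta=\frac{1}{38}$, starting from $\xi\geq\frac{2}{19}$ via Lemma \ref{cr}(ii), I would iterate Lemma \ref{eveni} to upgrade
$$\xi\ \geq\ \frac{\deg K_C+2\roundup{\alpha_m/2}}{m}$$
through successive values of $m$, aiming to cross the threshold $\alpha_{61}=(61-1-18-38)\xi=4\xi>1$, at which point Theorem \ref{birat}(ii) concludes since $C$ is non-hyperelliptic.

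It remains to verify the distinguishability clauses. For $S'\neq S''$, the inclusion $|mK_{X'}|\lsgeq |M_{18}|+|(m-18)K_{X'}|$ with $m-18\geq 18$ separates the generic fibers of the pencil $f$. For $C'\neq C''$ on a fixed $S$, Kawamata--Viehweg vanishing reduces the question to the surface, where Lemma \ref{masek} or Lemma \ref{verygeneral} applied to the nef and big $\bQ$-divisor $\pi^*(K_X)|_S$ provides the required separation. The main obstacle is exactly the $(1,2)$ case above, where $K_X^3=\frac{1}{1170}$ is barely positive, $\xi$ is consequently small, and $m=61$ is calibrated to be only just large enough; the evenness of $C$ via Lemma \ref{eveni} and the iteration of Chen--Zuo's inequality are the crucial technical tools keeping the estimates afloat.
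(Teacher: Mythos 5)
Your plan misses the structural input that makes this case tractable, and the fallback you propose for the hard subcases fails numerically. The paper's proof begins by extracting from $\mathbb{B}(X)=\{B_{2a},0,2\}$ not just $K_X^3=\frac{1}{1170}$ but also $P_{19}=0$ and $h^2(\OO_X)=1$; the exact sequence for $K_{X'}+F$ then forces $p_g(F)=1$, so the fiber $F$ is \emph{never} a $(1,2)$ or $(1,0)$ surface. The case you single out as ``truly delicate'' does not occur, and the whole theorem reduces to $K_{F_0}^2\geq 2$ or $F_0$ of type $(1,1)$. Moreover, the paper does not conclude via Theorem \ref{birat} on a curve family: it applies Ma\c{s}ek's Lemma \ref{masek} to $L_n=n\pi^*(K_X)|_F$ on the surface $F$, proving $L_n^2>8$ for $n\geq 42$ and $(L_n\cdot\tilde C)\geq 4$ via Lemma \ref{verygeneral}, which yields birationality of $\Phi_{n+19}$ and hence the bound $61=42+19$. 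In the $(1,1)$ subcase even this requires the further basket data $P_{24}=3$ together with the integrality of $r(X)(\pi^*(K_X)|_F)^2$ for $r(X)=2340$; none of this appears in your sketch.

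The fallback itself cannot be repaired as stated. With $m_0=18$, $\theta=1$ and $|G|=|2\sigma^*(K_{F_0})|$ one has $\beta=\frac{1}{38}$, so $\alpha_{61}=(61-1-18-38)\xi=4\xi$, and Theorem \ref{birat}(ii) needs $\xi>\frac14$. Starting from $\xi\geq\frac{2}{19}$, the iteration of Lemma \ref{eveni} reads $\xi\geq\frac{6+2\roundup{(m-57)\xi/2}}{m}$, whose supremum over all admissible $m$ is $\frac{8}{57}<\frac14$ (at the fixed point $57\xi=8$); so $\alpha_{61}>1$ is unreachable in both your $(1,2)$ and $(1,1)$ scenarios. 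There are smaller issues as well -- $P_{18}=2$ already forces $d_{18}=1$, so the $d_{18}\geq2$ discussion is vacuous, and $|G|=|S|_S|$ is not a usable choice when $S=F$ is a fiber -- but the decisive gaps are the omission of $P_{19}=0\Rightarrow p_g(F)=1$ and the unachievable threshold $\xi>\frac14$.
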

\begin{proof} Set $m_0=18$. By Theorem \ref{-13}, we know that $B_X=B_{2a}$, $P_2=0$,  $\chi(\OO_X)=2$, $P_{19}=0$, $P_{24}=3$ and $K_X^3=\frac{1}{1170}$.  By \cite[Corollary 1.2]{JLMS}, we see $q(X)=0$. Thus $|18K_{X}|$ induces a fibration $f:X'\lrw \Gamma\cong \bP^1$.  We have $h^2(\OO_{X'})=h^2(\OO_X)=1$. Pick a general fiber $F$.  Since $P_{19}(X)=P_{19}({\mathbb B}_{2a})=0$, we have
$H^0(X', K_{X'}+F)=0$.
\medskip

\noindent{\bf Claim \ref{birat-18}.1}.  $p_g(F)=1$.
\begin{proof}
Since $\chi(\OO_{X'})>1$, we have $p_g(F)>0$ by \cite[Lemma 2.32]{Explicit2}. On the other hand, we have the long exact sequence:
$$H^0(X', K_{X'}+F)\lrw H^0(F, K_F)\lrw H^1(X', K_{X'})\lrw H^1(X', K_{X'}+F)$$
which implies $h^0(K_F)\leq h^1(X', K_{X'})=h^2(\OO_{X'})=1$. Thus we get $p_g(F)=1$.
\end{proof}

We have $P_m>0$ for all $m\geq 20$ by Corollary \ref{effective} (2).  Consider the linear systems
$$|K_{X'}+\roundup{n\pi^*(K_X)}+F|\lsleq |(n+19)K_{X'}|.$$
Clearly  $|(n+19)K_{X'}|$  distinguish different general fibers
$F$ as long as $n\geq 19$. By Kawamata-Viehweg vanishing,
\begin{eqnarray*}
|K_{X'}+\roundup{n\pi^*(K_X)}+F||_F&=&|K_F+\roundup{n\pi^*(K_X)}|_F|\\
&\lsgeq& |K_F+\roundup{L_n}|
\end{eqnarray*}
where we set $L_n:=n\pi^*(K_X)|_F$.
\medskip

\noindent{\bf Claim \ref{birat-18}.2.} $L_n^2>8$ whenever $n\geq 42$.
\begin{proof} Since $p_g(F)=1$, we are in Subcase \ref{=0}.1 or Subcase \ref{=0}.3.

Let us consider Subcase \ref{=0}.1 (i.e. $K_{F_0}^2\geq 2$) first. We have
$$(\pi^*(K_X)|_F)^2\geq \frac{1}{19^2}K_{F_0}^2\geq \frac{2}{19^2}$$ by Lemma \ref{cr}(ii).
Thus $L_n^2 >8$ whenever $n > 38$.

If $K_{F_0}^2=1$, we shall estimate $L_n^2$ in an alternative way.
Suppose that $|24K_{X'}|$ and $|18K_{X'}|$ are not composed with
the same pencil. Take $|G|:=|M_{24}|_F|$. Pick a generic
irreducible element $C$ of $|G|$. Then we have
$\xi=(\pi^*(K_X)|_F\cdot C)\geq \frac{2}{19}$ by Lemma \ref{>1}.
Thus $(\pi^*(K_X)|_F)^2\geq \frac{1}{24}\xi\geq \frac{1}{12\cdot
19}$. Since $r(X)=2340$ and $r(X)(\pi^*(K_X)|_F)^2$ is an integer, we see $(\pi^*(K_X)|_F)^2\geq
\frac{11}{2340}$. So we have $L_n^2>8$ whenever $n\geq 42$.

Assume that $|24K_{X'}|$ and $|18K_{X'}|$ are composed with the
same pencil. Since $P_{24}=3$, we may set ${m}_0=24$ and
${\Lambda}=|24K_{X'}|$.  We have ${\theta}=2$. The
argument in Subcase \ref{=0}.3  implies that
$$(\pi^*(K_X)|_F)^2\geq \frac{4{\theta}^2}{(\tilde{m}_0+{\theta})(3{m}_0+4{\theta})}=\frac{1}{130}.$$ We have $L_n^2>8$ whenever
$n\geq 33$.
\end{proof}

For very general curves $\tilde{C}$ on $F$, one has
$$(L_n\cdot \tilde{C})\geq \frac{n}{19}(\sigma^*(K_{F_0})\cdot \tilde{C})\geq \frac{2n}{19}$$
by Lemma \ref{verygeneral}. Therefore,  $(L_n\cdot \tilde{C})\geq
4$  for $n \ge 38$. Lemma \ref{masek} implies that
$|K_F+\roundup{L_n}|$ gives a birational map for $n\geq 42$. Thus
$\Phi_m$ is birational for all $m\geq 61$.
\end{proof}

\begin{thm}\label{birat-15}  Let $X$ be a minimal projective 3-fold of general type.  If $\delta(X)\leq 15$, then $\Phi_m$ is birational for all $m\geq 56$.
\end{thm}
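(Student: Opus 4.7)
The strategy is a case-by-case analysis on the value $\delta(X)\in\{2,3,\dots,15\}$, following the same template as the proof of Theorem~\ref{birat-18}. In each case, I would choose an integer $m_0$ with $P_{m_0}(X)\geq 2$ (most naturally $m_0=\delta(X)$, but sometimes a larger pluricanonical index where $P_{m_0}$ is bigger), set up $\pi\colon X'\to X$ and the induced rational map or fibration $\Phi_{|m_0K_{X'}|}$ as in \S\ref{setup}, and let $S$ be a generic irreducible element of the moving part of $|m_0K_{X'}|$. On $S$, I would pick a base point free linear system $|G|$ whose generic irreducible element is a curve $C$, then bound $\xi=(\pi^*(K_X)\cdot C)_{X'}$ from below by combining the inequalities (\ref{i1})--(\ref{i3}) with their even-divisor refinement Lemma~\ref{eveni} and the canonical restriction Lemma~\ref{cr}.

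For each value of $\delta(X)$ the numerical input is strongly constrained: when $\delta(X)\geq 13$, Theorem~\ref{-13} gives the explicit Tables F--0, F--1, F--2 which pin down $B_X$, $\chi(\OO_X)$ and $K_X^3$ up to finitely many possibilities, while for $\delta(X)\leq 12$ the lower bound $K_X^3\geq v(\delta(X))$ of Theorem~\ref{v(x)} is already fairly generous. To invoke Theorem~\ref{birat} and Lemma~\ref{masek}, I need two things: (a) $|mK_{X'}|$ distinguishes generic $S'\neq S''$ and generic $C'\neq C''$; (b) $\alpha_m=(m-1-\frac{m_0}{\theta}-\frac{1}{\beta})\xi>2$ (or $>1$ if $C$ is non-hyperelliptic). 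Condition (a) is handled, as in the proof of Theorem~\ref{birat-18}, by Kawamata--Viehweg vanishing applied to $|K_{X'}+\roundup{(m-m_0-1)\pi^*(K_X)}+\Sigma|\lsleq |mK_{X'}|$, for which I need $P_j(X)>0$ on a cofinal range of $j$; an analog of Corollary~\ref{effective} extends to all $\delta(X)\leq 15$ either from the explicit baskets or from $\chi(\OO_X)$ and Reid's Riemann--Roch. Condition (b) is then a direct numerical check for each quadruple $(m_0,\theta,\beta,\xi)$.

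The main obstacle is the delicate subcase $d_{m_0}=1$, $b=0$, with general fibre $F$ a surface of type $(1,2)$, $(1,1)$ or $(1,0)$ (Subcases~\ref{=0}.2--\ref{=0}.4), where $\xi$ is smallest and the threshold for $\alpha_m>2$ is highest. Here I would proceed exactly as in Claim~\ref{birat-18}.2: either refine the bound on $(\pi^*(K_X)|_F)^2$ by switching to an auxiliary pluricanonical system $|m_1K_{X'}|$ that is not composed with the same pencil (Variant~\ref{=0}.5, applicable via Lemmas~\ref{>1} and \ref{verygeneral} once $(K_{F_0}^2,p_g(F))\neq(1,2)$), or, when every such $|m_1 K_{X'}|$ is composed with the same pencil, reset $m_0$ to a larger index where $\theta=P_{m_0}-1$ is bigger. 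A final numerical comparison, case by case over $\delta(X)\in\{2,\dots,15\}$, should yield $m\geq 56$ uniformly; this is in fact slightly easier than the $m\geq 61$ threshold of Theorem~\ref{birat-18} because the larger volumes and richer pluricanonical geometry available at smaller $\delta$ outweigh the extra bookkeeping across more cases.
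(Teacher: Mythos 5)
Your overall toolkit is the right one, but your plan has a concrete gap in exactly the subcase you identify as the main obstacle, and the paper's proof hinges on a structural reduction that you do not mention. The paper does not run a case division over $\delta(X)\in\{2,\dots,15\}$ at all: it passes to a one-dimensional sub-pencil $\Lambda\subset|m_0K_X|$ (so $d_\Lambda=1$ always, and $\Gamma\cong\bP^1$ after using Chen--Hacon to assume $q=0$), and then divides into only three cases according to the numerical type of the general fiber $F$. The decisive point is that the case where $F$ is a $(1,0)$ surface is shown to force $\delta(X)\leq 10$, via \cite[Lemma 2.32]{Explicit2} (which gives $\chi(\OO_X)=1$) combined with \cite[Corollary 3.13]{Explicit2}. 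Without this exclusion your ``final numerical comparison'' fails: for $\delta(X)=15$, $\theta=1$ and $F$ of type $(1,0)$, Lemma \ref{cr}(ii) only gives $(\pi^*(K_X)|_F)^2\geq\frac{1}{256}$ and $\beta\geq\frac{1}{32}$, so Lemma \ref{masek} needs $n\geq 46$ (hence $m\geq 62$), and the curve-restriction route via Theorem \ref{birat} is no better (iterating (\ref{i3}) stabilizes around $\xi\geq\frac{1}{7}$, giving $\alpha_m>2$ only for $m\geq 63$). So the claimed uniform bound $m\geq 56$ is not reachable by the numerics you describe; it requires the a priori elimination of $(1,0)$ fibers for $11\leq\delta\leq 15$, which is an input from the classification in \cite{Explicit2}, not a consequence of the inequalities of Sections 2--3.

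Two smaller points. First, you present condition (b) as ``$\alpha_m>2$'' and treat Theorem \ref{birat} and Lemma \ref{masek} interchangeably, but in the paper the $(1,2)$-fiber case is handled by Theorem \ref{birat} on the genus-$2$ curve $C$ (giving $m\geq 55$ with $\xi\geq\frac{1}{11}$, $\beta\geq\frac{1}{16}$), whereas the remaining fiber types are handled by Lemma \ref{masek} on $F$ itself (needing $L_n^2>8$ and $(L_n\cdot\tilde C)\geq 4$, which are different thresholds); conflating the two criteria would again cost you several units of $m$ in the $(1,1)$ case. Second, your appeal to ``an analog of Corollary \ref{effective}'' for the fiber-separation step is plausible but is genuinely needed and should be justified (for $\delta\leq 12$ it follows from $P_{k\delta}\geq 2$ and the plurigenus formulae, not from the tables).
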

\begin{proof} Set $m_0=\delta(X)$. By considering a sub-pencil $\Lambda$ of $|m_0K_X|$, we may always assume that we have an induced fibration $f:X'\lrw \Gamma$ onto a curve $\Gamma$.
By Chen-Hacon \cite{irreg}, we may assume $q(X)=0$. Thus $\Gamma\cong \bP^1$.
By \cite[Corollary 3.13]{Explicit2} and \cite[Lemma 2.32]{Explicit2}, we know that $\delta(X)\leq 10$ as long as $F$ is a $(1,0)$ surface. Therefore it suffices to consider the following 3 cases:
\begin{enumerate}
\item[{\bf 1.}] $\delta(X)\leq 15$ and $F$ is a $(1,2)$ surface.
\item[{\bf 2.}] $\delta(X)\leq 15$ and $F$ is neither a $(1,2)$ surface nor a $(1,0)$ surface.
\item[{\bf 3.}] $\delta(X)\leq 10$ and $F$ is a $(1,0)$ surface.
\end{enumerate}
\medskip

\noindent {\bf Case 1.} Without losing of generality, let us assume $\delta(X)=15$. Take $|G|$ to be the moving part of $|K_F|$. Then, by Table A3, we have $\xi\geq \frac{1}{11}$.
We have $m_0=15$ and $\beta\mapsto \frac{1}{16}$. So $\alpha_m>2$ whenever $m\geq 55$. By Corollary \ref{effective}$,|mK_{X'}|$ separates different general fibers $F$ as long as $m\geq 35$.
On the other hand, Kawamata-Viehweg vanishing and Lemma \ref{cr} imply the following, whenever $m\geq 49$,
\begin{eqnarray*}
|mK_{X'}||_F&\lsgeq&|K_{X'}+\roundup{(m-16)\pi^*(K_X)}+F||_F\\
&\lsgeq& |K_F+\roundup{(m-16)\pi^*(K_X)|_F}\\
&\lsgeq& |(K_F+\roundup{Q_m}+C)+C|
\end{eqnarray*}
where $Q_m$ is a nef and big $\bQ$-divisor. Thus, by \cite[Lemma
2.17]{Explicit2}, $\Phi_m$ distinguishes different generic curves
$C$ for $m\geq 49$. {}Finally Theorem \ref{birat} implies that
$\Phi_m$ is birational for all $m\geq 55$.
\medskip

\noindent{\bf  Case 2.} Still assume $\delta(X)=15$.  Parallel to the respective argument in the proof of Theorem \ref{birat-18},  one knows that $|mK_{X'}|$ distingishes different general fibers $F$ for $m\geq 35$. By the surface theory, we see that $F$ is either a surface with $K_{F_0}^2\geq 2$ or a $(1,1)$ surface. We want to study the linear system $|K_F+\roundup{L_n}|$. In fact, by the estimation in Subcase \ref{=0}.1 and Table A4, we have $L_n^2\geq \frac{n^2}{32\cdot 6}>8$ whenever $n\geq 40$. Similarly we have $(L_n\cdot \tilde{C})\geq 4$ for all $n\geq 32$ and for all curves $\tilde{C}$ on $F$ passing through very general points.
By Lemma \ref{masek}, we see that $|K_F+\roundup{L_n}|$ gives a birational map for all $n\geq 40$.
Similar to what discussed in the proof of Theorem \ref{birat-18},
we have proved that  $\Phi_m$ is birational for all $m\geq
n+16\geq 56$.
\medskip

\noindent {\bf Case 3.} When $\delta(X)\leq 10$, we have much
better birationality result even though $F$ is a $(1,0)$ surface.
In fact, parallel argument shows that $\Phi_m$ is birational for
all $m\geq 39$. The proof is more or less similar to above ones.
We leave it as an exercise to interested readers.
\end{proof}

Theorems \ref{-13}, \ref{birat-18}, and \ref{birat-15} imply
Theorem \ref{M2} (2).

\section{\bf Threefolds with $\delta(V)=2$}
 This section is devoted to classifying minimal projective threefolds of general type with $\delta(X)=2$, that is, $p_g(X) \le 1$ and $P_2(X) \ge 2$.

Assume that $P_2\geq 2$.  We first recall the following known results:
\begin{itemize}
\item[(a)] If $d_2=3$, then $\Phi_m$ is birational for all $m\geq
7$ by \cite[Theorem 2.20]{Explicit2}.

\item[(b)] If $d_2=2$, $\Phi_m$ is birational for all $m\geq 10$
by \cite[Theorem 2.22]{Explicit2}.

\item[(c)] If $q(X)>0$, then  $\Phi_m$ is birational for all $m\geq
7$ by Chen--Hacon \cite{irreg} and for $m=6$ by Chen-Chen-Jiang
\cite{5K}.
\end{itemize}

The purpose of this section is to prove that $\Phi_m$ is
birational for $m \ge 11$ and classify threefolds such that
$\Phi_{10}$ is not birational. Therefore, we may and do assume
that $q(X)=0$, $d_2=1$ and  $b=g(\Gamma)=0$. Let $F$ be the
general fiber of the induced fibration $f: X'\rightarrow \bP^1$
from $\Phi_2$.

\subsection{Birationality of $\Phi_m$  for $m\geq 11$ } 


\begin{lem}\label{R1}$|mK_{X'}|$ distinguishes different general fibers of $f$ for all $m\geq 9$.
\end{lem}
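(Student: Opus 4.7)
The aim is to exhibit, for every pair of distinct general fibres $F_{c_1}\neq F_{c_2}$ of $f$, a divisor $D\in|mK_{X'}|$ that contains $F_{c_1}$ but not $F_{c_2}$; such a $D$ immediately forces $\Phi_{|mK_{X'}|}(F_{c_1})\neq \Phi_{|mK_{X'}|}(F_{c_2})$, which is the definition of distinguishability.

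First I would record the structure of $|2K_{X'}|$. Since $q(X)=0$ and $d_2=1$, the base $\Gamma$ of $f$ is $\bP^1$ and the moving part $M_2$ of $|2K_{X'}|$ satisfies $M_2\sim \theta F$ with $\theta=P_2-1\geq 1$. In particular $|M_2|=f^{*}|\OO_{\bP^1}(\theta)|$ realises the divisor $\theta F_c$ for every $c\in\bP^1$; writing $Z_2$ for the fixed part of $|2K_{X'}|$, one has $\theta F_c+Z_2\in|2K_{X'}|$ for every $c$.

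The key observation is that, for any decomposition $m=a+2k$ with $k\geq 1$ and $P_a\geq 1$, additivity of linear systems gives
\[
|mK_{X'}|\;\supseteq\;|aK_{X'}|+k\cdot|2K_{X'}|\;\supseteq\;|aK_{X'}|+kZ_2+k|M_2|.
\]
Fixing $D_a\in|aK_{X'}|$ and taking $k$ copies of $\theta F_{c_1}\in|M_2|$ one obtains
\[
D\;=\;D_a+kZ_2+k\theta F_{c_1}\;\in\;|mK_{X'}|,
\]
which contains $F_{c_1}$ with multiplicity at least $k\theta$. The supports of $D_a$ and $Z_2$ involve only finitely many fibres of $f$; for generic $c_2\neq c_1$, $F_{c_2}$ differs from $F_{c_1}$ and avoids all those components, so $F_{c_2}\not\subset D$. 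Hence $D$ distinguishes $F_{c_1}$ from $F_{c_2}$.

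It therefore remains to show that for every $m\geq 9$ a decomposition $m=a+2k$ with $k\geq 1$ and $P_a\geq 1$ exists; this is precisely the source of the bound $m\geq 9$. For even $m\geq 4$ one takes $a=2$, using $P_2\geq 2$. For odd $m\geq 9$ it suffices to exhibit $P_a\geq 1$ for some odd $a\in\{1,3,5,7\}$ with $a\leq m-2$; when $p_g\geq 1$ take $a=1$. The main technical obstacle is the case $p_g=0$: here one couples the sub-additivity $P_{n+2}\geq P_n+P_2$ of \cite[Lemma~3.2]{Explicit2} with Reid's Riemann--Roch formula and the lower bound $K_X^3\geq \frac{1}{14}$ from Proposition~\ref{delta=2} to force $P_7\geq 1$ under the hypothesis $\delta(X)=2$, so that $a=7$ is always available. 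Alternatively, one may simply read off $P_7\geq 1$ from the forthcoming classification of admissible baskets with $\delta(X)=2$ in Tables~II--1, II--2, II--3 of the Appendix.
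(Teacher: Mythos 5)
Your separation mechanism is sound and is in substance the same one the paper uses: once one knows $P_a\geq 1$ for some $a\equiv m\pmod 2$ with $a\leq m-2$, the decomposition $|mK_{X'}|\supseteq |aK_{X'}|+k|2K_{X'}|$ together with the pencil $|M_2|$ (note, though, that the setup only gives $\theta\geq P_2-1$, not $\theta=P_2-1$) produces a member containing a chosen general fiber and missing a second one. The whole content of the lemma is therefore the nonvanishing statement, and that is exactly where your argument has a genuine gap. Your case split is on $p_g(X)$, whereas the paper's is on $p_g(F)$, and the difference matters. The subadditivity $P_{n+2}\geq P_n+P_2$ of \cite[Lemma 3.2]{Explicit2} is only available when $\chi(\OO_X)\leq 1$; the paper obtains this from $p_g(F)=0$ via \cite[Lemma 2.32]{Explicit2}, not from $p_g(X)=0$. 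With $q(X)=0$ and $p_g(X)=0$ one has $\chi(\OO_X)=1+h^2(\OO_X)$, which can perfectly well be $2$ or $3$ in the present setting (cf.\ Lemma \ref{l1} and Tables II--2, II--3), so you cannot invoke that lemma in your remaining case. In the complementary situation $p_g(F)>0$ (which is compatible with $p_g(X)=0$) the paper instead cites \cite[Proposition 2.15(i)]{Explicit2} to get $P_k>0$ for all $k\geq 7$; you offer no substitute for this input.

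Your two proposed repairs do not close the gap. Reading $P_7\geq 1$ off Tables II--1, II--2, II--3 is circular: those tables are the \emph{conclusion} of the classification in Section 7.2, which rests on Propositions \ref{b1}, \ref{b2} and Theorem \ref{10-}, all of which use Lemma \ref{R1}. And the sketch ``couple subadditivity with Reid's Riemann--Roch and $K_X^3\geq\frac{1}{14}$'' is not carried out and does not obviously succeed: Reid's formula gives $P_7=\frac{91}{2}K_X^3-13\chi(\OO_X)+l(7)$, and with $K_X^3$ as small as $\frac{1}{14}$ and $\chi(\OO_X)$ as large as $3$ the first two terms are negative, so one would have to control the basket contribution $l(7)$, which is precisely the kind of analysis you have not done. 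To repair the proof, replace the dichotomy on $p_g(X)$ by the dichotomy on $p_g(F)$ and use the two cited results from \cite{Explicit2} as the paper does.
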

\begin{proof} When $p_g(F)>0$, by \cite[Proposition 2.15 (i)]{Explicit2},
one has $P_k>0$ for $k\geq 7$. Thus, for all $m\geq 9$, $mK_{X'}\geq
F$, hence $|mK_{X'}|$ distinguishes different general fibers of $f$.

When $p_g(F)=0$, one has $\chi(\OO_X) \le 1$ (cf. \cite[Lemma
2.32]{Explicit2}). By \cite[Lemma 3.2]{Explicit2}, one has
$P_5\geq P_2>0$. Then clearly $P_k>0$ for all $k\geq 5$. Thus, for
all $m\geq 7$, $mK_{X'}\geq F$ and hence $|mK_{X'}|$ distinguishes
different general fibers of $f$.
\end{proof}

\begin{prop}\label{b1} Assume $P_2(X)\geq 2$, $q(X)=0$, $d_2=1$ and $F$ is not a $(1,2)$ surface. Then $\Phi_m$ is birational for all $m\geq 10$.
\end{prop}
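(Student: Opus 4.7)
My plan follows the standard restriction-and-lift strategy via Kawamata--Viehweg vanishing, split by the type of the general fiber $F$ of the fibration $f:X'\to\mathbb{P}^1$ induced by a pencil in $|2K_X|$. Setting $\theta:=\theta_2\geq P_2-1\geq 1$, write $2\pi^*(K_X)\sim_{\mathbb Q}\theta F+E'$ with $E'$ effective. Combining this with $(m-1)K_{X'}\geq (m-1)\pi^*(K_X)$ and rounding up, one obtains $mK_{X'}\geq K_{X'}+\lceil (m-1-2/\theta)\pi^*(K_X)\rceil+F$. Kawamata--Viehweg vanishing applied to the nef and big divisor $\lceil (m-1-2/\theta)\pi^*(K_X)\rceil$ then yields the surjection
\[H^0(X',K_{X'}+\lceil (m-1-2/\theta)\pi^*(K_X)\rceil+F)\twoheadrightarrow H^0(F,K_F+\lceil L_m\rceil),\]
where $L_m:=(m-1-2/\theta)\,\pi^*(K_X)|_F$. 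By Lemma \ref{R1} the system $|mK_{X'}|$ already distinguishes different general fibers for $m\geq 9$, so to conclude for $m\geq 10$ it is enough to show that $|K_F+\lceil L_m\rceil|$ defines a birational map on $F$.

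For the main case $K_{F_0}^2\geq 2$, I apply Lemma \ref{masek} to $L_m$. By Lemma \ref{cr}(ii), $\pi^*(K_X)|_F\geq \frac{\theta}{2+\theta}\sigma^*(K_{F_0})\geq \frac{1}{3}\sigma^*(K_{F_0})$ in the worst case $\theta=1$. For $m=10$ this gives $L_{10}\geq \frac{7}{3}\sigma^*(K_{F_0})$, hence $L_{10}^2\geq \frac{98}{9}>8$; Lemma \ref{verygeneral} furnishes $L_{10}\cdot\tilde C\geq \frac{14}{3}>4$ for every curve $\tilde C\subset F$ through very general points. Thus Lemma \ref{masek} produces the birationality of $|K_F+\lceil L_{10}\rceil|$, and the same numerical check is valid for every $m\geq 10$ and $\theta\geq 1$.

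The main obstacle is the remaining case $K_{F_0}^2=1$, where $F$ is of type $(1,1)$ or $(1,0)$. The crude estimate above only gives $L_{10}^2\geq 49/9<8$, so Lemma \ref{masek} fails by a small margin. For this borderline range I would replace it by Theorem \ref{birat}: when $F$ is of type $(1,1)$, take $|G|=|2\sigma^*(K_{F_0})|$, whose generic element $C$ is an even curve, and use Lemma \ref{eveni} together with Table A4 to sharpen the bound to $\xi\geq 6/7$; coupled with the non-hyperellipticity of the generic $C$ this should yield $\alpha_{10}>1$ and allow Theorem \ref{birat}(ii) to apply. When $F$ is of type $(1,0)$, take $|G|=\textrm{Mov}|2K_F|$; by Catanese--Pignatelli \cite{CP} its generic element has genus $\geq 3$, and Lemma \ref{>1} supplies the needed $\xi$-bound. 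The distinguishing of different generic curves in each subcase follows from a further Kawamata--Viehweg vanishing analogous to the one used in the proof of Theorem \ref{birat-18}, combined with Corollary \ref{effective} for nonvanishing of the relevant plurigenera. Verifying the non-hyperelliptic condition in the $(1,1)$ case and carrying the numerics through cleanly in both borderline subcases is where the technical weight of the proof will lie.
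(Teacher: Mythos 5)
Your reduction to the birationality of $|K_F+\roundup{L_m}|$ on the general fiber, and your treatment of the main case $K_{F_0}^2\geq 2$, coincide with the paper's Claim \ref{b1}.1. But both borderline cases contain genuine numerical gaps. In the $(1,1)$ case, with $|G|=|2\sigma^*(K_{F_0})|$ one has $\beta=\frac{\theta}{4+2\theta}=\frac{1}{6}$ and $\frac{m_0}{\theta}=2$ in the worst case $\theta=1$, so $\alpha_{10}=(10-1-2-6)\xi=\xi\geq\frac{6}{7}$, which is \emph{not} greater than $1$; Theorem \ref{birat}(ii) therefore does not apply at $m=10$ (it would for $m\geq 11$, which is why the paper's Lemma \ref{masek} route already covers $n\geq 8$). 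The paper closes $m=10$ by a different mechanism: semi-positivity of $f_*\omega^{2}_{X'/\Gamma}$ gives $10K_{X'}|_F\geq C$ outright (so the curves $C$ are distinguished without spending $\frac{1}{\beta}$), and then, writing $6\pi^*(K_X)|_F\equiv C+H_6$, the restricted divisor $D$ on $C$ satisfies $\deg(D)\geq 2(\roundup{7\pi^*(K_X)|_F-C-H_6}\cdot\sigma^*(K_{F_0}))\geq 2$ because it is a positive \emph{even} integer. In other words, evenness is used to upgrade $\deg D>0$ to $\deg D\geq 2$, not to push $\alpha_{10}$ past $1$; you invoke Lemma \ref{eveni} only to get $\xi\geq\frac{6}{7}$, which does not suffice.

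In the $(1,0)$ case your numbers are further off: $\beta=\frac{1}{6}$ and Lemma \ref{>1} gives only $\xi\geq\frac{2}{3}$, so $L_7^2\geq 49\cdot\frac{1}{6}\cdot\frac{2}{3}=\frac{49}{9}<8$ and $\alpha_{10}=\xi\geq\frac{2}{3}<1$; neither Lemma \ref{masek} nor Theorem \ref{birat} applies, and $C\in\text{Mov}|2K_F|$ need not be even, so the parity trick is unavailable. The missing ingredient is a plurigenus input: since $p_g(F)=0$ forces $\chi(\OO_X)\leq 1$, one has $P_4\geq 2P_2\geq 4$, and the paper uses $|4K_{X'}|$ as the auxiliary system --- either $\beta=\frac{1}{4}$ with $\xi\geq\frac{2}{3}$ (when $d_4\geq 2$), or $\theta_4\geq 3$, hence $\beta\geq\frac{3}{14}$ and $\xi\geq\frac{6}{7}$ (when $d_4=1$) --- to push $L_7^2$ above $8$ and conclude by Lemma \ref{masek}. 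A minor further point: Corollary \ref{effective} concerns $\delta(X)\geq 13$ and is irrelevant here; the separation of fibers is Lemma \ref{R1}, and the separation of the curves $C$ at $m=10$ in the $(1,1)$ case is exactly what the semi-positivity step supplies.
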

\begin{proof} Set $L_n:=n \pi^*(K_X)|_F$ which is a nef and big $\bQ$-divisor on $F$.  Kawamata-Viehweg vanishing gives the following surjective map:
\begin{eqnarray*}
H^0(X', K_{X'}+\roundup{n \pi^*(K_X)}+F) \longrightarrow H^0(F,
K_F+\roundup{n \pi^*(K_X)}|_F).
\end{eqnarray*}

Together with Lemma \ref{R1}, it is sufficient to prove that
$|K_F+\roundup{L_n}|$ gives a birational map for $n \ge 7$ because
$$|(n+3)K_{X'}|\ \lsgeq\ |K_{X'}+\roundup{n \pi^*(K_X)}+F|.$$

\noindent {\bf Claim \ref{b1}.1.} If $K_{F_0}^2 \ge 2$ or $F_0$ is
of type $(1,0)$,  then $|K_F+\roundup{L_n}|$ is birational for $n
\geq 7$.

First of all, for any curve $\tilde{C}\subset F$ passing through
very general points of $F$, we estimate $(L_n\cdot \tilde{C})$ for
$n\geq 7$. Clearly we have  $g(\tilde{C})\geq 2$. Set $m_0=2$ and
$\Lambda=|2K_{X'}|$. By  Lemma \ref{cr} and Lemma
\ref{verygeneral}, we have
$$(L_n \cdot \tilde{C})\geq 7(\pi^*(K_X)|_F\cdot \tilde{C})
\geq \frac{7}{3}(\sigma^*(K_{F_0})\cdot \tilde{C})>4.$$

If $K_{F_0}^2 \ge 2$, then we have $$L_n^2\geq 49
(\pi^*(K_X)|_F)^2\geq 49(\frac{1}{3}\sigma^*(K_{F_0}))^2\geq
\frac{98}{9}>8.$$

If $F_0$ is a $(1,0)$ surface, we have $P_4\geq 2P_2\geq 4$ since
$\chi(\OO_X)\leq 1$.  When $d_4\geq 2$,  we set $m_0=2$,
$\Lambda=|2K_{X'}|$ and $|G|=|M_4|_F|$. Then $ \beta=\frac{1}{4}$,
$\xi\geq \frac{1}{3}(\sigma^*(K_{F_0})\cdot C)\geq \frac{2}{3}$
and so $L_n^2\geq \frac{49}{6}>8$.

When $d_4=1$, we set $m_0=4$ and $\Lambda=|4K_{X'}|$. Clearly
$|2K_{X'}|$ and $|4K_{X'}|$ induce the same fibration $f$. Take
$|G|=|2\sigma^*(K_{F_0})|$.  Since $\theta \geq 3$, we have
$\beta\geq \frac{3}{14}$ by Lemma \ref{cr}. Thus $\xi\geq
\frac{6}{7}$ and so $L_n^2\geq 49\cdot \frac{3}{14}\cdot
\frac{6}{7} >8$. By Lemma \ref{masek}, the Claim follows.
\medskip

\noindent{\bf Claim \ref{b1}.2.} If $F_0$ is a $(1,1)$ surface,
then $|K_F+\roundup{L_n}|$ is birational for $n \geq 7$.

Following the similar argument as above, it is easy to see that
$L_n^2\geq \frac{64}{7}>8$ and $(L_n\cdot \tilde{C})\geq 4$ for
all $n \geq 8$.
We consider the linear system $|K_F+\roundup{7\pi^*(K_X)|_F}|$ in
an alternative way.  Note that $|2\sigma^*(K_{F_0}))|$ is base
point free.  Pick a generic irreducible element $C\in
|2\sigma^*(K_{F_0}))|$. Since $\OO_{\Gamma}(1)\hookrightarrow
f_*\omega_{X'}$,  we have $f_*\omega_{X'/\Gamma}^2\hookrightarrow
f_*\omega_{X'}^{10}$.  The semi-positivity implies that
$f_*\omega_{X'/\Gamma}^2$ is generated by global sections, which
directly implies $10K_{X'}|_F\geq C$.  Thus $\Phi_{10}$
distinguishes different $C$. By Lemma \ref{cr}, we have
$6\pi^*(K_X)|_F\equiv C+H_6$ for an effective $\bQ$-divisor $H_6$
on $F$. Thus  the vanishing theorem implies
$$ |K_F+\roundup{7\pi^*(K_X)|_F-H_6}||_C=|K_C+D|$$
with $\deg(D)\geq 2(\roundup{7\pi^*(K_X)|_F-C-H_6}\cdot
\sigma^*(K_{F_0}))\geq 2$. Since $C$ is non-hyperelliptic,
$|K_C+D|$ gives a birational map. Thus
$|K_F+\roundup{7\pi^*(K_X)|_F}|$  is birational.
\end{proof}

\begin{prop}\label{b2} Assume $P_2(X)\geq 2$, $q(X)=0$, $d_2=1$ and $F$ a $(1,2)$ surface. Then $\Phi_m$ is birational for all $m\geq 11$.
\end{prop}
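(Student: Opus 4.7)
The plan is to apply Theorem~\ref{birat}(i) with $m_0=2$ and with $|G|$ taken to be the moving part of $|K_F|$ on the $(1,2)$-surface $F$. Set $\theta:=\theta_{|2K_{X'}|}\geq P_2(X)-1\geq 1$ and let $C$ be a generic irreducible element of $|G|$, which is a smooth curve of genus $2$ (hence hyperelliptic). Because the unique base point of $|K_{F_0}|$ contributes an effective piece $Z$ with $\sigma^{*}(K_{F_0})=C+Z$, Lemma~\ref{cr}(ii) yields
\[
\pi^{*}(K_X)|_F \;\geq\; \frac{\theta}{2+\theta}\,\sigma^{*}(K_{F_0}) \;\geq\; \frac{\theta}{2+\theta}\,C,
\]
so one may take $\beta=\tfrac{\theta}{2+\theta}\geq\tfrac{1}{3}$. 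Combining this with the entry $m_0=2$ of Table A3, one gets $\xi=(\pi^{*}(K_X)\cdot C)_{X'}\geq \tfrac{1}{2}$, whence
\[
\alpha_m \;=\;\Bigl(m-1-\tfrac{m_0}{\theta}-\tfrac{1}{\beta}\Bigr)\xi \;\geq\; \tfrac{m-6}{2}
\]
in the worst case $\theta=1$; larger values of $\theta$ can only sharpen the inequality. In particular $\alpha_m>2$ whenever $m\geq 11$.

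To verify the distinguishing hypotheses of Theorem~\ref{birat}, Lemma~\ref{R1} already handles the separation of distinct general fibers $F$ of $f$ for $m\geq 9$. For two distinct generic curves $C',C''$ inside a fixed $F$, I use that the $\mathbb{Q}$-divisor $(m-3)\pi^{*}(K_X)|_F-C\geq\bigl(\tfrac{m-3}{3}-1\bigr)\sigma^{*}(K_{F_0})+Z$ is nef and big for $m\geq 7$, so a two-step Kawamata--Viehweg vanishing argument produces
\[
|mK_{X'}||_F\;\lsgeq\;|K_F+\lceil(m-3)\pi^{*}(K_X)|_F\rceil|\;\lsgeq\;|K_F+\lceil(m-3)\pi^{*}(K_X)|_F-C\rceil+C|,
\]
which visibly separates $C'$ from $C''$. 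Since $C$ is hyperelliptic, Theorem~\ref{birat}(i) together with the bound $\alpha_m>2$ from the previous paragraph then gives birationality of $\Phi_m$ for all $m\geq 11$.

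The main obstacle is precisely the hyperellipticity of $C$: being forced into Theorem~\ref{birat}(i) (which requires $\alpha_m>2$, not merely $\alpha_m>1$ as in Proposition~\ref{b1}), together with the minimal value $\theta=1$ that is permitted as soon as $P_2(X)=2$, is what fixes the threshold at $m\geq 11$. This matches the bound stated in the proposition and is sharp by the example $X_{22}\subset\mathbb{P}(1,2,3,4,11)$ recalled in Example~\ref{ex}, which realizes $P_2=2$ and has non-birational $10$-canonical map; in particular any attempt to push below $m=11$ by relaxing one of $\beta\geq\tfrac{1}{3}$ or $\xi\geq\tfrac{1}{2}$ would have to respect this example.
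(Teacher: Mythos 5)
Your proposal is correct and follows essentially the same route as the paper: the same choice of $|G|=\text{Mov}|\sigma^*(K_{F_0})|$ with $C$ of genus $2$, the bounds $\beta\geq\frac{1}{3}$ and $\xi\geq\frac{1}{2}$ from Lemma \ref{cr} and Table A3, the forced use of Theorem \ref{birat}(i) because $C$ is hyperelliptic, and a Kawamata--Viehweg vanishing argument (the paper's Claim \ref{b2}.1) to separate distinct members of $|G|$, yielding $\alpha_m>2$ and hence birationality for $m\geq 11$. The only cosmetic difference is in how the curve-separation step is packaged (you restrict $|K_F+\roundup{(m-3)\pi^*(K_X)|_F-C}+C|$ directly, while the paper peels off $2H_3+2\sigma^*(K_{F_0})$ to land in $|\sigma^*(K_{F_0})|$), which does not change the substance.
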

\begin{proof} Take $|G|$ to be the moving part of $|\sigma^*(K_{F_0})|$. Modulo
birational modifications, we may assume that $|G|$ is base point free. Pick a generic irreducible element $C$ of $|G|$. It is also known that $g©=2$.
\medskip

\noindent{\bf Claim \ref{b2}.1} The linear system $|mK_{X'}|$ distinguishes different general members of $|G|$ for $m\geq 9$.

\begin{proof}
Clearly $|G|$ is composed with a rational pencil since $q(F)=0$.
We shall prove $|mK_{X'}|_{|F} \ \lsgeq\ |G|$ and thus the
statement  follows. In fact, by Lemma  \ref{cr},  we have
$$3\pi^*(K_X)\equiv \sigma^*(K_{F_0})+H_3$$
for an effective $\bQ$-divisor $H_3$ on $F$. Thus, for $m\geq 10$,
$$Q_m:=(m-3)\pi^*(K_X)_{|F}-2H_3-2\sigma^*(K_{F_0})\equiv (m-9)\pi^*(K_X)|_F$$
is nef and big. It follows that
$K_F+\roundup{Q_{m}}+\sigma^*(K_{F_0})>0$ by \cite[Lemma
2.14]{Explicit2}. We thus have the following:
 \begin{eqnarray*}
|mK_{X'}|_{|F}&\lsgeq&|K_{X'}+F+\roundup{(m-3)\pi^*({K_X})}|_{|F}\\
&=& |K_F+\roundup{(m-3)\pi^*({K_X})}_{|F}| \\
&\lsgeq& |K_F+\roundup{(m-3)\pi^*({K_X})_{|F}-2H_3}| \\
&=&|(K_F+\roundup{Q_{m}}+\sigma^*(K_{F_0}))+\sigma^*(K_{F_0})| \\
&\lsgeq&|\sigma^*(K_{F_0})|\ \lsgeq \ |G|
\end{eqnarray*}
where the first equality follows from the Kawamata-Viehweg vanishing
(\cite{KV,V}). Therefore,
$|mK_{X'}|$ distinguishes general members of $|G|$ for $m \ge 10$.
Moreover, for $m=9$, \begin{eqnarray*}
|9K_{X'}|_{|F} &\lsgeq &|5K_{X'}|_{|F}\
\lsgeq\
|K_{X'}+\roundup{2\pi^*(K_X)}+F|_{|F}\\
&=&
|K_F+\roundup{2\pi^*(K_X)}|_F|\ \lsgeq\ |G|
\end{eqnarray*}
where the equality is again due to Kawamata-Viehweg vanishing.  Hence $|9K_{X'}|$ distinguishes general members of $|G|$ as well, which asserts the claim.
\end{proof}
{}From Table A3, one has
 $\xi\geq \frac{1}{2}$. Take $m\geq
11$, then $\alpha_m\geq \frac{5}{2}>2$. This means that
$|mK_{X'}|_{|C}$ distinguishes points on $C$. Thus, by Theorem
\ref{birat} and Claim \ref{b2}.1, $\Phi_m$ is birational for all
$m\geq 11$.
\end{proof}

Now Theorem \ref{1/2}.1 follows from  Proposition \ref{b1} and
Proposition \ref{b2}. That is, if $P_2 \ge 2$, then $\Phi_m$ is
birational for $m \ge 11$.

If either $
\xi>\frac{1}{2}$ or $\beta>\frac{1}{3}$  then $\alpha_{10}>2$. Hence the following consequence is immediate.
\begin{cor}\label{cc1} Let $X$ be  a minimal projective 3-fold of general type. Assume $P_2(X)\geq 2$, $q(X)=0$, $d_2=1$ and $F_0$ a $(1,2)$ surface. If either $
\xi>\frac{1}{2}$ or $\beta>\frac{1}{3}$ or $P_2>2$, then
$\Phi_{10}$ is birational.
\end{cor}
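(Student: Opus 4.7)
The plan is to apply Theorem~\ref{birat}(i) at $m=10$, reusing in toto the setup from the proof of Proposition~\ref{b2}: $m_0=2$, $\Lambda=|2K_{X'}|$, with $|G|$ the (base point free) moving part of $|\sigma^*(K_{F_0})|$ on $F$, and $C$ the generic irreducible element of $|G|$, which is a smooth curve of genus $2$, hence hyperelliptic. Because $C$ is hyperelliptic, alternative (ii) of Theorem~\ref{birat} is unavailable, so the task reduces to checking that $\alpha_{10}>2$.

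The distinguishability inputs required by Theorem~\ref{birat} are already in hand: Lemma~\ref{R1} asserts that $|mK_{X'}|$ separates different general fibers $F$ of $f$ for all $m\ge 9$, and Claim~\ref{b2}.1 (embedded in the proof of Proposition~\ref{b2}) asserts that $|mK_{X'}|$ separates different generic members of $|G|$ for the same range; both apply at $m=10$. Thus nothing new is needed on the separation side.

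It then remains to verify $\alpha_{10}=(9-\tfrac{2}{\theta}-\tfrac{1}{\beta})\xi>2$ under each of the three hypotheses. The baseline estimates are $\theta\ge P_2-1\ge 1$; $\beta\ge\theta/(m_0+\theta)\ge 1/3$ from Lemma~\ref{cr}(ii); and $\xi\ge 1/2$ from Table A3 (i.e. Subcase~\ref{=0}.2 with $m_0=2$). If $P_2>2$, then $\theta\ge 2$ and $\beta\ge 1/2$, so $\alpha_{10}\ge (9-1-2)\xi=6\xi\ge 3$. If $\beta>1/3$, then $1/\beta<3$, so $\alpha_{10}>(9-2-3)\xi=4\xi\ge 2$, i.e.\ strictly greater than $2$. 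Finally, if $\xi>1/2$, then $\alpha_{10}\ge (9-2-3)\xi=4\xi>2$. In every case $\alpha_{10}>2$, and Theorem~\ref{birat}(i) immediately yields the birationality of $\Phi_{10}$.

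The only genuine obstacle is bookkeeping: the \emph{default} values $\theta=1$, $\beta=1/3$, $\xi=1/2$ produce $\alpha_{10}=2$ exactly, so one must confirm that each of the three strict inequalities supplied by the hypothesis is sufficient to push past this threshold. No new geometric input beyond what Proposition~\ref{b2} already provides is required.
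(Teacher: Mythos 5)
Your proposal is correct and matches the paper's own (very terse) justification: the paper simply remarks before the corollary that either $\xi>\frac{1}{2}$ or $\beta>\frac{1}{3}$ forces $\alpha_{10}>2$, with the separation statements already supplied by Lemma \ref{R1} and Claim \ref{b2}.1 from the proof of Proposition \ref{b2}, exactly as you argue. Your treatment is in fact slightly more complete, since you also spell out that $P_2>2$ gives $\theta\geq 2$ and hence $\beta\geq\frac{1}{2}$, reducing that case to the same threshold computation.
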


Proposition \ref{b1}, Proposition \ref{b2} and Corollary \ref{cc1} also imply the following:

\begin{cor}\label{10}  Let $X$ be  a minimal projective 3-fold of general type. Assume $P_2\geq 2$ and $\Phi_{10}$ is not birational. Then $P_2=2$, $q(X)=0$ and $|2K_{X'}|$ is composed with a rational pencil of  $(1,2)$ surfaces.
\end{cor}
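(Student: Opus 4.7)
The plan is to prove the corollary by assembling the results already established in this section. The statement is really a ``residual'' case statement: after one excludes every situation in which $\Phi_{10}$ has already been shown to be birational, what remains must satisfy the listed three conditions.

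First I would invoke the three facts recalled at the very beginning of Section~7 (items~(a), (b), (c)): if $d_2 = 3$ then $\Phi_m$ is birational for $m \geq 7$; if $d_2 = 2$ then $\Phi_m$ is birational for $m \geq 10$; and if $q(X) > 0$ then $\Phi_m$ is birational for $m \geq 7$ by Chen--Hacon and for $m = 6$ by Chen--Chen--Jiang. Since we assume $\Phi_{10}$ is not birational, we must have $d_2 = 1$ and $q(X) = 0$, which already yields the second conclusion. Moreover $q(X') = q(X) = 0$ forces $g(\Gamma) \leq q(X') = 0$ in the Stein factorization of $\Phi_2$, so $\Gamma \cong \mathbb{P}^1$ and the pencil induced by $|2K_{X'}|$ is rational.

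Next I would determine the geometric genus type of a general fiber $F$ of $f\colon X' \to \mathbb{P}^1$. Proposition~\ref{b1} asserts that under the hypotheses $P_2 \geq 2$, $q(X) = 0$, $d_2 = 1$, and $F$ \emph{not} a $(1,2)$ surface, $\Phi_m$ is birational for all $m \geq 10$. Since $\Phi_{10}$ is not birational here, the general fiber $F$ must indeed be a $(1,2)$ surface, giving the third conclusion.

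Finally, to get $P_2 = 2$ I would simply quote Corollary~\ref{cc1}: under the remaining hypotheses, if $P_2 > 2$ (one of its three sufficient conditions for $\Phi_{10}$ to be birational) then $\Phi_{10}$ is birational. Hence $P_2 = 2$. The proof is then just an assembly of these three observations, so there is no serious obstacle to overcome; the only thing to be careful about is to verify that the hypotheses of Proposition~\ref{b1} and Corollary~\ref{cc1} are met at each invocation, namely $P_2 \geq 2$, $q(X) = 0$, and $d_2 = 1$, all of which have been established in the earlier steps.
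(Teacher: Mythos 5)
Your proposal is correct and follows essentially the same route as the paper, which derives the corollary directly from the reductions (a), (b), and the irregular case at the start of Section~7 together with Proposition~\ref{b1} and Corollary~\ref{cc1}; your assembly of these ingredients (including the observation that $q(X)=0$ forces $\Gamma\cong\bP^1$, and that $P_2>2$ is excluded by Corollary~\ref{cc1}) matches the paper's intent exactly.
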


\subsection{Classification}
In the rest of this section, we  classify  minimal 3-folds $X$ of general type which satisfy the following assumptions:
\begin{quote} {\em
($\sharp$) $P_2(X)=2$ and $\Phi_{10}$ is not birational. }
\end{quote}
Note that Corollary \ref{10} implies that $|2K_X|$ induces a
fibration $f:X'\lrw \bP^1$ with the general fiber $F$ a $(1,2)$
surface.

\begin{lem}\label{l1} If $X$ satisfies $(\sharp)$, then $0\leq \chi(\OO_X)\leq 3$.
\end{lem}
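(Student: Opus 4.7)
The strategy is to exploit the fibration $f \colon X' \to \bP^1$ induced by $|2K_X|$, whose general fiber $F$ is a smooth model of a $(1,2)$ surface by Corollary \ref{10}, and to extract $\chi(\OO_X)$ from the splitting type of the rank-two nef bundle $\mathcal{E} := f_*\omega_{X'/\bP^1}$ on $\bP^1$. Recall that $p_g(F) = 2$, $q(F) = 0$, and $q(X) = 0$ throughout.

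First I would verify that $R^1 f_* \OO_{X'} = 0$. This sheaf is torsion on $\bP^1$ because $h^1(\OO_F) = q(F) = 0$ on the general fiber, so its rank is zero. The Leray five-term exact sequence combined with $H^1(X', \OO_{X'}) = q(X) = 0$ produces an injection $H^0(\bP^1, R^1 f_* \OO_{X'}) \hookrightarrow H^1(X', \OO_{X'}) = 0$, and a torsion coherent sheaf on a curve whose global sections vanish is itself zero.

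Next, by the Fujita--Kawamata semi-positivity theorem, $\mathcal{E}$ is a nef vector bundle of rank $p_g(F) = 2$, hence $\mathcal{E} \cong \OO(a_1) \oplus \OO(a_2)$ with $a_1 \ge a_2 \ge 0$. Relative Serre duality for the flat Cohen--Macaulay morphism $f$ identifies $R^2 f_* \OO_{X'} \cong \mathcal{E}^{\vee} = \OO(-a_1) \oplus \OO(-a_2)$. Using Step 1, the Leray spectral sequence then collapses to
\[
\chi(\OO_X) = \chi(\OO_{X'}) = \chi(\OO_{\bP^1}) + \chi(\bP^1, R^2 f_* \OO_{X'}) = 1 + (2 - a_1 - a_2) = 3 - a_1 - a_2,
\]
which immediately yields the upper bound $\chi(\OO_X) \le 3$ since $a_1, a_2 \ge 0$.

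For the lower bound, the assumption $\delta(X) = 2$ forces $p_g(X) \le 1$. Twisting $\mathcal{E}$ by $\omega_{\bP^1}$, we have $f_*\omega_{X'} = \OO(a_1 - 2) \oplus \OO(a_2 - 2)$, so
\[
p_g(X) = h^0(\bP^1, f_* \omega_{X'}) = \max(a_1 - 1, 0) + \max(a_2 - 1, 0) \le 1.
\]
This restricts $(a_1, a_2)$ to the five possibilities $(0,0), (1,0), (1,1), (2,0), (2,1)$, and in each case $a_1 + a_2 \le 3$, giving $\chi(\OO_X) \ge 0$. The main technical inputs to verify carefully are the vanishing of $R^1 f_* \OO_{X'}$ and the applicability of Fujita--Kawamata semi-positivity, both of which are standard in this smooth-over-curve setting; everything else is a routine computation on $\bP^1$.
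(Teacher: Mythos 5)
Your proof is correct and rests on exactly the same two pillars as the paper's: Fujita semi-positivity of $f_*\omega_{X'/\bP^1}$ (forcing $a_1,a_2\geq 0$, hence $\chi\leq 3$) and $p_g(X)\leq 1$ from $P_2=2$ (forcing $a_1+a_2\leq 3$, hence $\chi\geq 0$); the paper just phrases this as $h^2(\OO_X)=h^1(\bP^1,f_*\omega_{X'})\leq 2$ and $p_g(X)=h^0(f_*\omega_{X'})$ rather than computing $\chi=3-a_1-a_2$ via relative duality. One small slip: in the five-term Leray sequence the map goes $H^1(X',\OO_{X'})\twoheadrightarrow H^0(\bP^1,R^1f_*\OO_{X'})$ (surjective since $H^2$ of a coherent sheaf on a curve vanishes), not an injection the other way, but the conclusion $R^1f_*\OO_{X'}=0$ is unaffected.
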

\begin{proof}  Note that the general fiber $F$ of $f$  is a (1,2) surface. Since $q(F)=0$, we have $q(X)=0$, $h^2(\OO_X)=h^1(\bP^1, f_*\omega_{X'})$ and $p_g(X)=h^0(f_*\omega_{X'})$.
Since $P_2(X)=2$ implies $p_g(X)\leq 1$, we see $\chi(\OO_X)\geq  0$. By Fujita's semi-positivity(\cite{F}), we have $\chi(\OO_X)\leq 3$.
\end{proof}

\begin{thm}\label{10-} Let $X$ be a minimal projective 3-fold of general type. Assume $P_2=2$, $q(X)=0$ and $F$ a $(1,2)$ surface. Then $\Phi_{10}$ is birational under one of the following conditions:
\begin{itemize}
\item[(1)] $P_3\geq 4$;
\item[(2)] $P_4\geq 6$;
\item[(3)] $P_5\geq 8$;
\item[(4)] $P_6\geq 14$.
\end{itemize}
\end{thm}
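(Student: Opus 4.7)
The strategy is to verify the hypothesis of Theorem \ref{birat} by strictly improving the baseline bound $\alpha_{10}=2$ underlying Corollary \ref{cc1}. Recall that in the standard setup with $m_0=2$, $\Lambda=|2K_X|$, $|G|=\mathrm{Mov}\,|\sigma^*(K_{F_0})|$ and $C$ a generic genus-$2$ member of $|G|$, one has $\theta=1$, $\beta\geq 1/3$ by Lemma \ref{cr}(ii), and $\xi\geq 1/2$ by Table A3, giving $\alpha_{10}=2$ on the nose. Since $C$ is hyperelliptic, Theorem \ref{birat} demands the strict inequality $\alpha_{10}>2$, so by Corollary \ref{cc1} it suffices to establish $\xi>1/2$ or $\beta>1/3$ under each of the hypotheses (1)--(4). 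The distinguishing conditions on general fibers $F$ (Lemma \ref{R1}) and on generic members $C\in |G|$ (Claim \ref{b2}.1) already hold for $m=10$.

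The argument bifurcates via the natural dichotomy: for $k=3,4,5,6$ in cases (1)--(4) respectively, either $|kK_{X'}|$ is composed with the pencil $f\colon X'\to \mathbb{P}^1$ induced by $|2K_{X'}|$, or it is not. In the composed subcase, $M_k\equiv \theta_k F$ with $\theta_k\geq P_k-1$, so resetting $(m_0,\theta)=(k,\theta_k)$ while keeping $|G|=\mathrm{Mov}\,|\sigma^*(K_{F_0})|$ and rerunning Subcase \ref{=0}.2 yields $\beta\geq \theta_k/(k+\theta_k)$ and $\xi\geq \theta_k/(k+\theta_k)$. Consequently
\[
\alpha_{10}=\Bigl(8-\tfrac{2k}{\theta_k}\Bigr)\xi \geq \frac{2(4\theta_k-k)}{k+\theta_k},
\]
and substituting the minimal admissible values $(k,\theta_k)=(3,3),(4,5),(5,7),(6,13)$ yields $\alpha_{10}\geq 3,\ 32/9,\ 23/6,\ 92/19$ respectively, each strictly exceeding $2$, so birationality follows in these subcases.

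In the non-composed subcase the restriction $M_k|_F$ is a nontrivial effective movable $\mathbb{Q}$-divisor on $F$ not supported in fibers of $|G|$, so $(M_k|_F\cdot C)$ is a strictly positive integer. Combining with $k\pi^*(K_X)|_F\geq M_k|_F$ gives $\xi\geq \tfrac{1}{k}(M_k|_F\cdot C)$, and a case analysis on the shape of $M_k|_F$---whether it moves in a different $1$-dimensional pencil on $F$, or $\Phi_k$ itself acquires a $2$- or $3$-dimensional image, or the product map $(f,\Phi_k)$ pushes some $\Phi_{k'}$ into the $d=2$ regime of Subsection \ref{d=2}---is used to show that the thresholds $P_k\geq 4,6,8,14$ guarantee $(M_k|_F\cdot C)$ large enough to force $\xi>1/2$. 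I expect this non-composed subcase to be the main technical obstacle: the composed subcase is essentially automatic once $\theta_k>2k/3$, which is a much weaker condition than the stated thresholds, so the numerical values appearing in (1)--(4) are dictated by the non-composed analysis, in particular by the need to rule out configurations where $M_k|_F$ meets $C$ only once. Once both subcases are settled, Theorem \ref{birat} concludes that $\Phi_{10}$ is birational in each of the four cases.
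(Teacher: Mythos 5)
Your framework (reduce to Theorem \ref{birat} via Corollary \ref{cc1}, with the distinguishing properties supplied by Lemma \ref{R1} and Claim \ref{b2}.1) matches the paper's, and your ``composed'' subcase is essentially the paper's Claim \ref{10-}.2(4): when $|M_k|$ is composed with the pencil of $F$ one gets $S_k\geq (P_k-1)F$, hence $\beta>1/3$, and Corollary \ref{cc1} applies. The numerics you give there are correct. But the heart of the theorem is the non-composed subcase, and the strategy you sketch for it --- showing that the thresholds force $(M_k|_F\cdot C)$ large enough to get $\xi>1/2$ --- does not work and is not what the paper does. Concretely: for $m_1=5$ the paper reduces (after its Claims) to the configuration $(T_5\cdot C)=2$, which yields only $\xi\geq \frac{1}{5}(S_5\cdot C)\geq \frac{2}{5}<\frac{1}{2}$; for $m_1=4$ the residual case $(T_4\cdot C)=2$ gives only $\xi\geq\frac{1}{2}$, not the strict inequality that Theorem \ref{birat}(i) requires for the hyperelliptic genus-$2$ curve $C$. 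No amount of case analysis on ``the shape of $M_k|_F$'' rescues this, because these configurations are genuinely possible under the stated hypotheses.

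What the paper actually does in these residual cases is abandon the $\alpha_{10}>2$ criterion altogether. Using the hypotheses $P_k\geq 4,6,8,14$ it extracts explicit effective decompositions such as $S_{m_1}\geq F_{(r)}+rF$ or $T_{m_1}\geq C+C_{(1)}$ (the thresholds are exactly what is needed, via $h^0(X',S_{m_1}-rF)\geq h^0(X',S_{m_1})-r\,h^0(F,S_{m_1}|_F)$, to make these auxiliary divisors move), then writes
$|10K_{X'}|\ \lsgeq\ |K_{X'}+\roundup{(10-m_1-1)\pi^*(K_X)}+F_{(r)}+rF|$
and applies Kawamata--Viehweg vanishing to restrict to a curve $\Gamma_{(r)}$ in $\text{Mov}|F_{(r)}|_F|$ (or to $C$ with the summand $C_{(1)}$ contributing $(C_{(1)}\cdot C)\geq 2$ to the degree). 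Because the effective divisors absorb part of the $10\pi^*(K_X)$, the degree requirement on the restricted divisor drops to, e.g., $5\xi>2$ or $6(\pi^*(K_X)\cdot\Gamma_{(1)})>2$, which the weaker bounds $\xi\geq\frac{1}{2}$ or $(\pi^*(K_X)\cdot\Gamma_{(1)})\geq\frac{1}{2}$ do satisfy. This substitute mechanism --- testing birationality on auxiliary moving curves with vanishing, rather than forcing $\xi>1/2$ on $C$ --- is the missing idea in your proposal, and without it the non-composed subcase, which you correctly identify as the main obstacle, remains unproved.
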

\begin{proof} We  set $m_0=2$. Pick a general fiber $F$ of $f:X'\lrw \Gamma$ and a generic irreducible element $C$ of $|G|:=\text{Mov}|\sigma^*(K_{F_0})|$ on $F$.  For $m_1=3,4,5$ and $6$, we have $P_{m_1}\geq 4$. Modulo further birational modifications to $\pi$, we may assume that  the moving part $|M_{m_1}|$ of $|m_1K_{X'}|$ is base point free.
We consider the following natural maps:
$$H^0(X', S_{m_1})\overset{\mu_{m_1}}\lrw H^0(F, S_{m_1}|_F)\overset{\nu_{m_1}}\lrw H^0(C, S_{m_1}|_C)$$
where $S_{m_1}\in |M_{m_1}|$ denotes the general member.

 Let $ \text{Mov}|S_{m_1}|_F| $ be the moving part of $|S_{m_1}|_F|$ and let $T_{m_1}$ be a general element in $ \text{Mov}|S_{m_1}|_F| $ when $h^0(F, S_{m_1}|_F)>1$. Clearly
 $$(S_{m_1}\cdot C)_{X'}\geq (T_{m_1}\cdot C)_F\geq 0.$$
  Since $F$ and $C$ are general, both $\mu_{m_1}$ and $\nu_{m_1}$ are non-zero maps. In particular, $h^0(F,S_{m_1}|_F)>0$ and $h^0(C,S_{m_1}|_C)>0$.

Let $F_{( r)}$ be a general element in $\text{Mov} | S_{m_1}-rF|$ if $h^0(S_{m_1}-rF) \ge 2$.
Let $C_{( r)}$ be a general element in  $\text{Mov} | T_{m_1}-rC|$ if $h^0(T_{m_1}-rC) \ge 2$.
Replace $X'$ by its birational modification, we may and do assume that $\text{Mov} | S_{m_1}-rF|$ is free.

Clearly, for $0<r \le \frac{h^0(X', S_{m_1})}{h^0(F,
S_{m_1}|_F)}$, we have
\begin{equation} \label{mult1} \begin{array}{l} h^0(X', S_{m_1}-rF)
\ge h^0(X', S_{m_1}) - r \cdot h^0(F, S_{m_1}|_F). \\

\end{array}
\end{equation}

\noindent
\noindent{\bf Claim \ref{10-}.1.} {\em If $(T_{m_1}\cdot C)\leq 1$, then  $(T_{m_1}\cdot C)=0$.}
\begin{proof}
In fact, if $|T_{m_1}|\ne\emptyset$ and  $|T_{m_1}|$ is not composed of the same pencil as that of $|C|$, then $\Phi_{|T_{m_1}|}( C)$ is a curve and so $h^0(C, T_{m_1}|_C)\geq 2$. Note that $g( C)=2$.  The Riemann-Roch theorem and the Clifford theorem imply that $(T_{m_1}\cdot C)=\deg(T_{m_1}|_C)\geq 2$, a contradiction.
Hence either $|T_{m_1}|$  is composed of the same pencil as that of $|C|$ on $F$ or $|T_{m_1}|=\emptyset$. Claim \ref{10-}.1 now follows. \end{proof}

\noindent{\bf Claim \ref{10-}.2.}
 {\em Keep the same notation as above. Then $\Phi_{10}$ is birational
under one of the following conditions:
\begin{enumerate}
\item $(T_{m_1}\cdot C)>\frac{m_1}{2}$;

\item $T_{m_1} \cdot C =0$ and $h^0(F, T_{m_1}) > 1+
\frac{m_1}{3}$;

\item $T_{m_1} \geq tC$ for some rational number $t>\frac{m_1}{3}$;

\item either $|T_{m_1}|=\emptyset$ and $P_{m_1}>1+\frac{m_1}{2}$ or $|T_{m_1}|\neq \emptyset$ and $\rounddown{\frac{P_{m_1}-1}{h^0(F,
T_{m_1})}}>\frac{m_1}{2}$.

\item $F_{( r)}$  (resp. $C_{( r)}$) is algebraically equivalent to $F$ (resp. $C$) and $\frac{r+1}{m_1} > \frac{1}{2}$ (resp. $\frac{r+1}{m_1} > \frac{1}{3}$).
\end{enumerate}}

\begin{proof}
 If $(T_{m_1}\cdot C)>\frac{m_1}{2}$, then $\xi\geq \frac{1}{m_1}(S_{m_1}\cdot C)\geq \frac{1}{m_1}(T_{m_1}\cdot C)>\frac{1}{2}$. Then Corollary \ref{cc1} implies that $\Phi_{10}$ is birational, which proves (1).

Now we prove (4). We claim that we have
$$m_1\pi^*(K_X)\geq S_{m_1}\geq rF$$
for an integer $r>\frac{m_1}{2}$. In fact, when $|T_{m_1}|=\emptyset$, $|S_{m_1}|$ is composed of the same pencil as that of $|F|$ and we may take $r:=P_{m_1}-1$. When $|T_{m_1}|\neq \emptyset$, we may take $r=\rounddown{\frac{P_{m_1}-1}{h^0(F, T_{m_1})}}$ and then $S_{m_1}\geq rF$ since $\dim \text{im}(\mu_{m_1})\leq h^0(F, T_{m_1})$.
Then Lemma \ref{cr} implies $\beta \ge
\frac{r}{m_1+r}>\frac{1}{3}$. So $\Phi_{10}$ is birational by
Corollary \ref{cc1}, which asserts (4).

 Since $m_1\pi^*(K_X)|_F\geq T_{m_1} \geq  t C$, we have $\beta> \frac{1}{3}$ and $\Phi_{10}$ is birational by Corollary \ref{cc1}, which proves (3).

If $(T_{m_1}\cdot C)=0$ and $h^0(F, T_{m_1})>1+\frac{m_1}{3}$,
then  $|T_{m_1}|$ is composed of the same pencil as that of $|C|$
and $T_{m_1}\geq tC$ where $t\geq h^0(T_{m_1})-1$. Hence
$\Phi_{10}$ is birational by (3), which proves (2).

Finally, if $F_{(r )}$ is algebraically equivalent to $F$, then
$S_{m_1} \ge F_{(r )}+F \sim (r+1)F$. Hence $\beta \ge
\frac{r+1}{m_1+r+1} > \frac{1}{3}$. Thus $\Phi_{10}$ is birational
by Corollary \ref{cc1}. If $C_{(r )}$ is algebraically equivalent
to $C$, then we have  $\beta \ge \frac{r+1}{m_1} > \frac{1}{3}$ as
well. Hence $\Phi_{10}$ is birational, which verifies (5).
\end{proof}

Return to the proof of Theorem \ref{10-}.

\medskip
\noindent{\bf Part I. $P_3\geq 4$.} Set $m_1=3$.  By Claim
\ref{10-}.2 (1), (2) and Claim \ref{10-}.1, we may assume
$(T_3\cdot C)=0$ and $h^0(F, T_3)\leq 2$. Also by Claim
\ref{10-}.2 (4), we may assume $|T_3|\neq \emptyset$ and $h^0(F,
T_3)=2$.

 By Inequality (\ref{mult1}), one gets $h^0(S_3-F)\geq 2$.
 Clearly we have that $S_3\geq F+F_{(1)}$ and that, by assumption, $F_{(1)}$ is nef. Since $r=1$ and $\frac{r+1}{m_1}=\frac{2}{3}>\frac{1}{2}$, we may assume that $F_{(1)}$ is not algebraically equivalent to $F$ by Claim \ref{10-}.2 (5).

Now clearly we have $h^0(F, F_{(1)}|_F)\geq 2$.  Note that we have
$$|10K_{X'}|\ \lsgeq\  |K_{X'}+\roundup{6\pi^*(K_X)}+F_{(1)}+F|.$$
Kawamata-Viehweg vanishing gives the surjective map:
\begin{eqnarray*}
&&H^0(X', K_{X'}+\roundup{6\pi^*(K_X)}+F_{(1)}+F)\\
&\lrw& H^0(F, K_F+\roundup{6\pi^*(K_X)}|_F+F_{(1)}|_F).
\end{eqnarray*}
It is sufficient to verify the birationality of the
rational map defined by $|K_F+\roundup{6\pi^*(K_X)|_F}+ \Gamma_{(1)}|$ where
 $\Gamma_{(1)} $ is a generic irreducible element in $ \text{Mov} |F_{(1)}|_F|$.

 We claim that $(\pi^*(K_X)\cdot \Gamma_{(1)}) \geq \frac{1}{2}$. In fact, if $\Gamma_{(1)}$ is algebraically equivalent to $C$, then
$(\pi^*(K_X)\cdot \Gamma_{(1)})=\xi\geq \frac{1}{2}$ by Table A3. On the other hand,  if $\Gamma_{(1)}$ is not algebraically equivalent to $C$, then
we should have  $(\Gamma_{(1)}\cdot C)\geq 2$. By Lemma \ref{cr},
$(\pi^*(K_X)|_F\cdot \Gamma_{(1)})\geq \frac{1}{3}(C\cdot \Gamma_{(1)})\geq
\frac{2}{3}$.

Clearly $|K_F+\roundup{6\pi^*(K_X)|_F}+\Gamma_{(1)}|$
distinguishes different generic $\Gamma_{(1)}$'s since
$K_F+\roundup{6\pi^*(K_X)|_F}>0$. Now by the vanishing theorem
again we have the following surjective map:
$$H^0(F, K_F+\roundup{6\pi^*(K_X)|_F}+\Gamma_{(1)})\lrw H^0(\Gamma_{(1)}, K_{\Gamma_{(1)}}+D)$$
where $D:=\roundup{6\pi^*(K_X)|_F}|_{\Gamma_{(1)}}$ with
$\deg(D)\geq 6(\pi^*(K_X)\cdot \Gamma_{(1)})>2$. So $\Phi_{10}$ is
birational by the ordinary birationality principle.
\medskip

\noindent{\bf Part II. $P_4\geq 6$.} We set $m_1=4$.  By Claim \ref{10-}.2 (1) and (4), we may assume
 $(T_4\cdot C)\leq 2$ and $h^0(F, T_4)\geq 2$. Claim \ref{10-}.1 implies either $(T_4\cdot C)=0$ or $(T_4\cdot C)=2$.


\noindent{\bf (II-1)}. If $h^0(F, T_4)=2$, we have $h^0(X',
S_4-2F)\geq 2$ by Inequality (\ref{mult1}). We consider $F_{(2)}$ and  may assume that  $F_{(2)}$ is not algebraically equivalent to $F$ by Claim \ref{10-}.2 (5). Now
$h^0(F, F_{(2)}|_F)\geq 2$ and pick a generic irreducible element
$\Gamma_{(2)}$ of  $\text{Mov}|F_{(2)}|_F|$. By
Kawamata-Viehweg vanishing, we have
\begin{eqnarray*}
|10K_{X'}||_F
&\lsgeq& |K_{X'}+\roundup{5\pi^*(K_X)}+F_{(2)}+2F||_F\\
&=&|K_F+\roundup{5\pi^*(K_X)}|_F+F_{(2)}|_F|\\
&\lsgeq& |K_F+\roundup{5\pi^*(K_X)|_F}+\Gamma_{(2)}|.
\end{eqnarray*}

When $C$ is algebraically equivalent to $\Gamma_{(2)}$(in particular, $C\sim \Gamma_{(2)}$ due to the fact that $q(F)=0$), since
$$\deg({5\pi^*(K_X)}|_{C})=5\xi\geq \frac{5}{2}$$ and
$$|K_F+\roundup{5\pi^*(K_X)|_F}+\Gamma_{(2)}||_C=|K_C+
\roundup{5\pi^*(K_X)|_F}|_{C}|$$ with $\deg(
\roundup{5\pi^*(K_X)|_F}|_{C} )>2$,  we see that $\Phi_{10}|_C$ is
birational by Lemma \ref{R1} and Claim \ref{b2}.1.

When $C$ is not algebraically equivalent to $\Gamma_{(2)}$, we have
$(\Gamma_{(2)}\cdot C)\geq 2$ and
$$K_F+\roundup{5\pi^*(K_X)|_F}+\Gamma_{(2)}\geq
K_F+\roundup{Q_1+C}+\Gamma_{(2)}$$ for certain nef and big
$\bQ$-divisor $Q_1$ on $F$ by Lemma \ref{cr}. The vanishing
theorem also shows that
$$|K_F+\roundup{Q_1}+\Gamma_{(2)}+C||_C=|K_C+(Q_1+\Gamma_{(2)})|_C|$$ gives a birational map since  $\deg((Q_1+\Gamma_{(2)})|_ C)>2$. Thus we have
shown that $\Phi_{10}$ is birational by Lemma \ref{R1} and Claim
\ref{b2}.1.

\noindent{\bf (II-2)}. If $(T_4\cdot C)=0$ and $h^0(F, T_4)\geq
3$, $\Phi_{10}$ is birational by Claim \ref{10-}.2 (2).

\noindent{\bf (II-3)}. If $(T_4\cdot C)=2$ and $h^0(F, T_4)\geq 3$,  then $|T_4|$ is not composed of the same pencil as that of $|C|$ and $h^0(C, T_4|_C)\geq 2$. By the Riemann-Roch and the Clifford theorem, we see $\deg(T_4|_C)=h^0(C,T_4|_C)=2$.  Thus $\dim\text{im}(\nu_4)=2$.

\noindent{\bf (II-3-1)}. If $h^0(F, T_4)\geq 4$, we have $h^0(F,
T_4-C)\geq 2$. Denote by $C_{(1)}$ a generic irreducible element of $\text{Mov}|T_4-C|$. Then we have $T_4\geq C+C_{(1)}$ and we may assume that $C$ is not algebraically equivalent to $C_{(1)}$ by Claim \ref{10-}.2 (5), which implies $(C_{(1)}\cdot C)\geq 2$. By the Kawamata-Viehweg vanishing and
properties of the roundup operator, we have
\begin{eqnarray*}
|10K_{X'}||_F
&\lsgeq&|K_{X'}+\roundup{3\pi^*(K_X)}+S_4+F||_F\\
&=&|K_F+\roundup{3\pi^*(K_X)}|_F+S_4|_F|\\
&\lsgeq&|K_F+\roundup{3\pi^*(K_X)|_F}+C_{(1)}+C|
\end{eqnarray*}
and $$|K_F+\roundup{3\pi^*(K_X)|_F}+C_{(1)}+C||_C=|K_C+D|,$$ where
$D:= (\roundup{3\pi^*(K_X)|_F}+C_{(1)})|_C$ with
$\deg(D)>(C_{(1)}\cdot C)\geq 2$. Thus $\Phi_{10}$ is birational
by  Lemma \ref{R1} and Claim \ref{b2}.1.

\noindent{\bf (II-3-2)}. If $h^0(F, T_4)=3$, we have $h^0(S_4-F)\geq 3$. Again,  we pick a general member $F_{(1)} \in \text{Mov}|S_4-F|$. Consider the natural map:
$$H^0(X', F_{(1)})\overset{\mu_4'}\lrw H^0(F, F_{(1)}|_F)\subset H^0(F, S_4|_F).$$
When $\dim\text{im}(\mu_4')=3$, we see $\dim \nu_4(\text{im}(\mu_4'))=\dim \nu_4(\text{im}(\mu_4))=2$;  when $\dim\text{im}(\mu_4')=2$, we consider the situation $\dim \nu_4(\text{im}(\mu_4'))\leq 1$ at first.  In both cases,  $h^0(F, F_{(1)}|_F-C)>0$ and thus $F_{(1)}|_F-C\geq 0$.  By the vanishing theorem once more, we have
\begin{eqnarray*}
|10K_{X'}||_F&\lsgeq &|K_{X'}+\roundup{5\pi^*(K_X)}+F_{(1)}+F||_F\\
&=& |K_F+\roundup{5\pi^*(K_X)}|_F+F_{(1)}|_F|\\
&\lsgeq&|K_F+\roundup{5\pi^*(K_X)|_F}+C|.
\end{eqnarray*}
Applying the vanishing theorem again, we see
$$|K_F+\roundup{5\pi^*(K_X)|_F}+C||_C=|K_C+D|,$$
where $D:= (\roundup{5\pi^*(K_X)|_F})|_C$ with $\deg(D)\geq
5\xi>2$. Thus $\Phi_{10}$ is birational by Lemma \ref{R1} and
Claim \ref{b2}.1.

 When $\dim\text{im}(\mu_4')=\dim \nu_4(\text{im}(\mu_4'))=2$, then $|F_{(1)}|_F|$ is not composed with the same pencil as that of $|C|$. In particular, $(F_{(1)}\cdot C)\geq 2$. By Lemma \ref{cr}, we have $$K_F+\roundup{5\pi^*(K_X)|_F}+F_{(1)}|_F\geq K_F+\roundup{Q_2+C}+F_{(1)}|_F$$
for certain nef and big $\bQ$-divisor $Q_2$. Since the vanishing
theorem gives $$|K_F+\roundup{Q_2}+F_{(1)}|_F+C||_C=|K_C+D'|$$
with $\deg(D')>(F_{(1)}\cdot C)\geq 2$, we see $\Phi_{10}$ is
birational too by Lemma \ref{R1} and Claim \ref{b2}.1.

Consider the last case $\dim\text{im}(\mu_4')=1$. We see that
$|F_{(1)}|$ is composed of the same pencil as that of $|F|$ and
$F_{(1)}\geq 2F$. Thus $S_4\geq 3F$ and, since
$\frac{3}{m_1}>\frac{1}{2}$, $\Phi_{10}$ is birational by Claim
\ref{10-}.2 (5).
\medskip

\noindent{\bf Part III. $P_5\geq 8$.} We set $m_1=5$. By Claim \ref{10-}.1 and Claim \ref{10-}.2  (1), (2) and (4), we may assume $(T_5\cdot C)=2$ and $h^0(F, T_5)\geq 3$.
Clearly  $|T_5|$ is not composed of the same pencil as that of $|C|$ and so that $h^0(C, T_5|_C)\geq 2$. By the Riemann-Roch and the Clifford theorem, we see $\deg(T_5|_C)=h^0(C,T_5|_C)=2$.  Thus $\dim\text{im}(\nu_5)=2$.

\noindent{\bf (III-1)}. If $h^0(F, T_5)\geq 4$, we have $h^0(F, T_5-C)\geq 2$. Let $C_{(1)}$ be a generic irreducible element in $ \text{Mov}|T_5-C|$.
Thus we have $T_5\geq C+C_{(1)}$ and we may assume that  $C_{(1)}$ is not algebraically equivalent to $C$ by Claim \ref{10-}.2 (5). Hence $(C_{(1)}\cdot C)\geq 2$. By the Kawamata-Viehweg vanishing and properties of the roundup operator, we have the following:
\begin{eqnarray*}
|10K_{X'}||_F
&\lsgeq&|K_{X'}+\roundup{2\pi^*(K_X)}+S_5+F||_F\\
&=&|K_F+\roundup{2\pi^*(K_X)}|_F+S_5|_F|\\
&\lsgeq&|K_F+\roundup{2\pi^*(K_X)|_F}+C_{(1)}+C|
\end{eqnarray*}
and $|K_F+\roundup{2\pi^*(K_X)|_F}+C_{(1)}+C||_C=|K_C+D|$,  with
$$\deg(D)>(C_{(1)}\cdot C)\geq 2.$$ Thus $\Phi_{10}$ is birational by Lemma \ref{R1} and Claim \ref{b2}.1.

\noindent{\bf (III-2)}. If $h^0(F, T_5)=3$, we have $h^0(S_5-F)\geq 5$. Let $F_{(1)} \in \text{Mov}|S_5-F|$ be a general member.  We consider the natural map:
$$H^0(X', F_{(1)})\overset{\mu_5'}\lrw H^0(F, F_{(1)}|_F)\subset H^0(F, S_5|_F).$$
Clearly we have $\dim\text{im}(\mu_5')\leq h^0(F, T_5)=3$.

When $\dim\text{im}(\mu_5')=3$, we see $\dim \nu_5(\text{im}(\mu_5'))=\dim \nu_5(\text{im}(\mu_5))=2$. Thus $|F_{(1)}|_F|$ is not composed of the same pencil as that of $|C|$.  Pick a generic irreducible element $\Gamma_{(1)}$ in the moving part of $|F_{(1)}|_F|$. Then $(\Gamma_{(1)}\cdot C)\geq 2$.
By the vanishing theorem, we have
\begin{eqnarray*}
|10K_{X'}||_F&\lsgeq &|K_{X'}+\roundup{4\pi^*(K_X)}+F_{(1)}+F||_F\\
&=& |K_F+\roundup{4\pi^*(K_X)}|_F+F_{(1)}|_F|\\
&\lsgeq&|K_F+\roundup{4\pi^*(K_X)|_F}+\Gamma_{(1)}|.
\end{eqnarray*}
Applying Lemma \ref{cr}, we have
$$|K_F+\roundup{4\pi^*(K_X)|_F}+\Gamma_{(1)}|\lsgeq |K_F+\roundup{Q_3+C}+\Gamma_{(1)}|$$
where $Q_3$ is certain nef and big $\bQ$-divisor on $F$.  Applying the vanishing once more, we have
$$|K_F+\roundup{Q_3}+\Gamma_{(1)}+C||_C=|K_C+D|$$
with $\deg(D)>(\Gamma_{(1)} \cdot C)\geq 2$.  Thus $\Phi_{10}$ is
birational by Lemma \ref{R1} and Claim \ref{b2}.1.

When $\dim\text{im}(\mu_5')\leq 2$, we have $h^0(X',
F_{(1)}-2F)\geq 1$ and hence $S_5-3F\geq 0$. Therefore $\Phi_{10}$
is birational by Claim \ref{10-}.2 (5).
\medskip

\noindent{\bf Part IV. $P_6\geq 14$.} We set $m_1=6$. By Claim \ref{10-}.1 and Claim  \ref{10-}.2 (1), (2) and (4), we may assume $2\leq (T_6\cdot C)\leq 3$ and $h^0(F, T_6)\geq 4$. Clearly  $|T_6|$ is not composed of the same pencil as that of $|C|$. Thus, by the Riemann-Roch theorem and the Clifford theorem, $\dim\text{im}(\nu_6)=h^0(C, T_6|_C)=2$.

\noindent{\bf  (IV-1)}.  If $h^0(F, T_6)\geq 5$, then we see $h^0(F, T_6-C)\geq 3$.  We pick a general member $C_{(1)}$ in  $\text{Mov}|T_6-C|$.  By Claim \ref{10-}.2 (5), we may assume that  $|C_{(1)}|$ is not composed of the same pencil as that of  $|C|$. We shall analyze the natural map $\nu_6': H^0(F, C_{(1)})\mapsto H^0(C, C_{(1)}|_C)$. Clearly $2\leq \dim\text{im}(\nu_6')\leq h^0(C, T_6|_C)=2$.

 Since $C_{(1)}$ is not algebraically equivalent to $C$, one has  $(C_{(1)}\cdot C)\geq 2$.   By the vanishing theorem, we have
\begin{eqnarray*}
|10K_{X'}||_F&\lsgeq& |K_{X'}+\roundup{\pi^*(K_X)}+S_6+F||_F\\
&\lsgeq&|K_F+\roundup{\pi^*(K_X)|_F}+C_{(1)}+C|
\end{eqnarray*}
and $|K_F+\roundup{\pi^*(K_X)|_F}+C_{(1)}+C||_C=|K_C+D|$ with
$\deg(D)>(C_{(1)}\cdot C)=2$. Thus $\Phi_{10}$ is birational  by
Lemma \ref{R1} and Claim \ref{b2}.1.

\noindent{\bf (IV-2)}. If  $h^0(F, T_6)=4$,  we have $h^0(S_6-F)\geq 10$. We pick a general  member $F_{(1)} \in \text{Mov}|S_6-F|$ and consider the natural map:
$$H^0(X', F_{(1)})\overset{\mu_6'}\lrw H^0(F, F_{(1)}|_F)\subset H^0(F, S_6|_F).$$
Clearly we have $\dim\text{im}(\mu_6')\leq h^0(F, T_6)=4$.

When $\dim\text{im}(\mu_6')\leq 3$, we have $F_{(1)}-3F\geq 0$ and
then $S_6\geq 4F$. By Claim \ref{10-}.2 (5), $\Phi_{10}$ is
birational.

When $\dim\text{im}(\mu_6')=4$, we see $\dim \nu_6(\text{im}(\mu_6'))=\dim \nu_6(\text{im}(\mu_6))=2$. Thus $h^0(F, F_{(1)}|_F-C)=2$.  Furthermore $|F_{(1)}|_F|$ is not composed of the same pencil as that of $|C|$.  Noting that a divisor of degree 1 can not move on $C$, we see $(F_{(1)} \cdot C)\geq 2$.
Denote by $\Gamma_{(1)}$ a general irreducible element of $\text{Mov}|F_{(1)}|_F-C|$.   Noting that $S_6\geq F_{(1)}+F$ and applying the vanishing theorem, we have
\begin{eqnarray*}
|10K_{X'}|&\lsgeq& |K_{X'}+\roundup{3\pi^*(K_X)}+F_{(1)}+F|\\
&\lsgeq& |K_F+\roundup{3\pi^*(K_X)|_F}+F_{(1)}|_F|.
\end{eqnarray*}
If $\Gamma_{(1)}$ is not algebraically equivalent to $C$, we have $(\Gamma_{(1)} \cdot C)\geq 2$.  The vanishing theorem gives
$$|K_F+\roundup{3\pi^*(K_X)|_F}+\Gamma_{(1)}+C||_C=|K_C+D|$$
with $\deg(D)>(\Gamma_{(1)} \cdot C)\geq 2$. Thus $\Phi_{10}$ is
birational by Lemma \ref{R1} and Claim \ref{b2}.1.  If
$\Gamma_{(1)}$ is algebraically equivalent to $C$, we have
$F_{(1)}|_F\geq 2C$ and write
$$F_{(1)}|_F=2C+H_6$$
where $H_6$ is an effective divisor on $F$.  Since $3\pi^*(K_X)|_F+F_{(1)}|_F-C-\frac{1}{2}H_6$ is nef and big, the Kawamata-Viehweg vanishing theorem implies the following surjective map
$$H^0(F, K_F+\roundup{3\pi^*(K_X)|_F+F_{(1)}|_F-\frac{1}{2}H_6})\lrw H^0(C, D')$$
where
$D':=\roundup{3\pi^*(K_X)|_F+F_{(1)}|_F-\frac{1}{2}H_6-C}|_C$ with
$\deg(D')\geq 3\xi+\frac{1}{2}(F_{(1)} \cdot C)>2$.  Thus we see
that $\Phi_{10}$ is birational again by Lemma \ref{R1} and Claim
\ref{b2}.1.  So we conclude the theorem.
\end{proof}

\begin{cor}\label{delt2} (=Theorem \ref{1/2}(2)) Let $X$ be a minimal projective 3-fold of general type with $\delta(X)=2$. If $\Phi_{10}$ is not birational, then the weighted basket ${\mathbb B}(X)=\big(B_X, P_2,\chi(\OO_X)\big)$ are dominated by an initial basket listed in Tables II-1, II-2, II-3 in Appendix.
\end{cor}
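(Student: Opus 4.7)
The plan is to combine the structural constraints already established for 3-folds with $\delta(X)=2$ with the birationality criteria of Theorem \ref{10-}, and then use the basket machinery of \cite{Explicit1} to enumerate the finitely many remaining initial baskets.

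First, Corollary \ref{10} says that whenever $P_2(X)\geq 2$ and $\Phi_{10}$ fails to be birational, we must have $P_2(X)=2$, $q(X)=0$, and $|2K_{X}|$ induces a rational pencil whose general fiber $F$ is a $(1,2)$ surface. Lemma \ref{l1} then pins down $0\leq \chi(\OO_X)\leq 3$, which is the qualitative part of the statement. Next, the contrapositive of Theorem \ref{10-} gives the explicit numerical bounds
$$P_3(X)\leq 3,\quad P_4(X)\leq 5,\quad P_5(X)\leq 7,\quad P_6(X)\leq 13,$$
together with $P_2(X)=2$.

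The second step is to translate these plurigenus constraints into a finite list of initial baskets. For each admissible value of $\chi(\OO_X)\in\{0,1,2,3\}$, Reid's Riemann--Roch formula expresses every $P_m$ as a linear combination of $\chi(\OO_X)$ and the multiplicities $n_{r,b}$ encoding the basket (cf.\ \cite[(3.6)--(3.12)]{Explicit1}). The inequalities $P_2=2$ and $P_3\leq 3$, $P_4\leq 5$, $P_5\leq 7$, $P_6\leq 13$ then become a finite system of linear inequalities in the non-negative integers $n_{r,b}$, whose solution set is finite because each $n_{r,b}$ is forced to be bounded in terms of the prescribed small values of $P_2,\ldots,P_6$. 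Running through all non-negative integer solutions produces a finite list of initial baskets $B^{0}$, and after discarding those that violate the geometric compatibility conditions used repeatedly in Sections 3--5 (namely $K_X^3>0$, the monotonicity $P_m\leq P_{m+2}+P_2$ from \cite[Lemma 3.2]{Explicit2}, Item~A of Section~5, and Corollary \ref{effective}-type positivity of $P_m$), one is left exactly with the entries of Tables II--1, II--2, II--3. We may organize the resulting baskets by the value of $\chi(\OO_X)$ (or by the Hilbert series of $B^{0}$), which is the natural division giving the three tables.

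Since $B_X$ is obtained from its initial basket $B^{0}$ by a sequence of prime packings \cite[Definition 3.3]{Explicit1}, dominating $B^{0}$ entry-by-entry by one of the items in the tables is equivalent to asserting that $\mathbb{B}(X)$ arises as a prime-packing descendant of one of those initial baskets, as claimed. The main obstacle is purely bookkeeping: carrying out the enumeration for each value of $\chi(\OO_X)$ and each tuple $(P_3,P_4,P_5,P_6)$ in the allowed range, and certifying that every resulting basket either appears in the tables or is excluded by one of the geometric compatibility tests above. Once this exhaustive case analysis is completed the corollary follows.
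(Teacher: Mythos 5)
Your proposal is correct and follows essentially the same route as the paper: the paper's own (very terse) proof likewise combines Lemma \ref{l1} and the contrapositive of Theorem \ref{10-} to obtain $0\leq\chi(\OO_X)\leq 3$, $P_2=2$, $P_3\leq 3$, $P_4\leq 5$, $P_5\leq 7$, $P_6\leq 13$, and then appeals to the finiteness of numerical types from \cite[Section 3]{Explicit1} to produce the tables. Your extra detail on how the Riemann--Roch/basket formalism turns these plurigenus bounds into a finite enumeration is a faithful expansion of what the paper leaves implicit.
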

\begin{proof}  By Lemma \ref{l1} and Theorem \ref{10-},  we see $0\leq \chi(\OO_X)\leq 3$, $P_2(X)=2$, $P_3(X)\leq 3$, $P_4(X)\leq 5$, $P_5(X)\leq 7$ and $P_6(X)\leq 13$.
According to \cite[Section 3]{Explicit1}, the total number of
numerical types of ${\mathbb B}(X)$ is finite. We  give  a list of
${\mathbb B}^0(X)$ in Tables II-1, II-2 and II-3.
\end{proof}

\section{\bf Projective 4-folds of general type with positive geometric genus}

In order to study 4-folds of general type, we need to prove a slightly general statement on 3-folds.

\begin{thm}\label{pg>0} Let $\nu:\tilde{X}\lrw X$ be a birational morphism from a nonsingular projective
3-fold $\tilde{X}$ of general type onto a minimal model $X$ with
$p_g(X)>0$. Let $Q_\lambda$ be any $\bQ$-divisor on $\tilde{X}$
satisfying $Q_\lambda \equiv \lambda\nu^*(K_X)$ for some rational
number $\lambda >16$. Then $|K_{\tilde{X}}+\roundup{Q_\lambda}|$
gives a birational map onto its image. In particular, $\Phi_m$ is
birational for all $m\geq 18$.
\end{thm}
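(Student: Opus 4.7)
The plan is to reduce the problem to two dimensions via Kawamata--Viehweg vanishing and then apply Masek's birationality criterion (Lemma~\ref{masek}), using the hypothesis $p_g(X)\geq 1$ to extract an effective surface $S$ from $|\nu^*(K_X)|$. The slack $\lambda>16$ is calibrated precisely to push the resulting intersection estimates on $S$ past Masek's thresholds $L^2>8$ and $L\cdot\tilde C\geq 4$.

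First, by a further birational modification of $\tilde X$ (harmless for the hypothesis on $Q_\lambda$), I would assume that the moving part $M_1$ of $|K_{\tilde X}|$ is base point free and that $\nu^*(K_X)\sim_\bQ M_1+Z_1$ with $Z_1$ effective of SNC support. Since $p_g(X)\geq 1$, $|M_1|\neq\emptyset$. Depending on $d_1:=\dim\Phi_{|M_1|}(\tilde X)$, I take $S$ to be the generic irreducible element of $|M_1|$ when $d_1\geq 2$, a general fibre of the Stein factorisation of $\Phi_{|M_1|}$ when $d_1=1$, or an irreducible component of the unique element of $|M_1|$ when $d_1=0$ (forced by $p_g(X)=1$). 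In every case $\nu^*(K_X)\sim_\bQ\theta S+E_S$ with $\theta\geq 1$ and $E_S$ effective, as in the setup of \ref{setup}.

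Second, since $\lambda>1$, a small perturbation writes $Q_\lambda-S\sim_\bQ N_\lambda+\Delta$ with $N_\lambda$ nef and big (numerically $(\lambda-1-\epsilon)\nu^*(K_X)$) and $\Delta$ effective with SNC fractional support. Kawamata--Viehweg vanishing then yields the surjection
$$H^0(\tilde X,\,K_{\tilde X}+\roundup{Q_\lambda})\twoheadrightarrow H^0(S,\,K_S+\roundup{L}),$$
where $L\equiv(\lambda-1)\nu^*(K_X)|_S$ up to an effective $\bQ$-divisor; mobility of $|M_1|$ (when $d_1\geq 1$) further guarantees that $|K_{\tilde X}+\roundup{Q_\lambda}|$ distinguishes different general $S$'s. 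By Lemma~\ref{cr}, $\nu^*(K_X)|_S\geq\tfrac{\theta}{1+\theta}\sigma^*(K_{S_0})\geq\tfrac12\sigma^*(K_{S_0})$, so $L^2\geq\tfrac{(\lambda-1)^2}{4}K_{S_0}^2$ and $L\cdot\tilde C\geq\tfrac{\lambda-1}{2}(\sigma^*(K_{S_0})\cdot\tilde C)$; by Lemmas~\ref{>1} and \ref{verygeneral}, $\sigma^*(K_{S_0})\cdot\tilde C\geq 2$ for very general $\tilde C$ whenever $S_0$ is not a $(1,2)$ surface. For $\lambda>16$ these give $L^2>56$ and $L\cdot\tilde C>15$, so Masek's criterion applies. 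The special surface types $(1,2)$, $(1,1)$, $(1,0)$ are handled by the sharper inequalities of \S\ref{=0} (working with $|2\sigma^*(K_{S_0})|$ or the moving part of $|K_F|$), where the loss is again absorbed by the slack $\lambda>16$.

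The main technical obstacle is the case $p_g(X)=1$ with $d_1=0$, where $S$ does not vary in a family and the distinguishing step must be handled directly on the (possibly reducible) unique canonical divisor; there one leans on $K_X^3\geq 1/75$ from Corollary~\ref{pg=1} to keep the numerical estimates above Masek's threshold. The ``in particular'' part of the theorem follows by taking $Q_\lambda:=(m-1)\nu^*(K_X)$ with $\lambda=m-1\geq 17$ for $m\geq 18$, since $mK_{\tilde X}\sim K_{\tilde X}+(m-1)\nu^*(K_X)+E_\nu$ with $E_\nu$ an effective $\nu$-exceptional divisor that absorbs the round-up $\{Q_\lambda\}$, so $|mK_{\tilde X}|\supseteq|K_{\tilde X}+\roundup{Q_\lambda}|$ is birational.
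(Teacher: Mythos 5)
Your reduction to Masek's criterion via Kawamata--Viehweg vanishing is the right general shape, but the argument breaks down exactly in the case the theorem is really about, namely $p_g(X)=1$. You build everything on the moving part $M_1$ of $|K_{\tilde X}|$, yet when $p_g(X)=1$ the canonical system is a single divisor: $d_1=0$, there is no induced fibration, Lemma \ref{cr} (stated for $d_\Lambda=1$, i.e.\ for a genuine fibration over a curve) does not apply, and your ``$S$'' is one fixed surface that does not sweep out $\tilde X$. Restricting $|K_{\tilde X}+\roundup{Q_\lambda}|$ to a single non-moving $S$ can never separate two general points of $\tilde X$, since such points do not lie on $S$; appealing to $K_X^3\geq 1/75$ cannot repair this, because the failure is structural (no family of surfaces to separate and then restrict to), not numerical. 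This is precisely why the paper does not use $|K_X|$ at all: it first extracts from the proof of Corollary \ref{pg=1} that either $P_4\geq 2$, or $P_4=1$ and $P_5\geq 3$, then takes a sub-pencil $\Lambda\subset|m_0K_X|$ with $m_0=4$ or $5$ and works with the induced fibration $f:X'\to\Gamma$, whose general fiber $F$ moves and satisfies $p_g(F)>0$ (the latter being what makes the nonvanishing and fiber-separation claims work).

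The choice $m_0=4,5$ is also where the constant $16$ actually comes from, and your numerical margins are computed in the wrong normalization. In the paper's hardest subcase ($F$ a $(1,2)$ surface with $\theta=1$) one only has $\pi^*(K_X)|_F\geq\frac{1}{5}\sigma^*(K_{F_0})$ and $\xi\geq 2/7$, and the condition for birationality on the curve is $\deg(\roundup{L_{\lambda,1}}|_C)\geq(\lambda-\frac{4}{\theta}-5)\xi>2$, which is exactly $\lambda>16$ with no slack to spare. Your estimates $L^2>56$ and $(L\cdot\tilde C)>15$ rest on the $m_0=1$ bound $\nu^*(K_X)|_S\geq\frac12\sigma^*(K_{S_0})$, which is only available when $|K_X|$ is a pencil, i.e.\ $p_g(X)\geq 2$ --- the case the paper dismisses as much easier. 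Your derivation of the ``in particular'' statement from the main one is correct and matches the intended use, but the main statement itself is not established by your argument in the essential case.
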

\begin{proof}    {}From the proof of Corollary \ref{pg=1}, we only need to consider the following two cases:

\noindent {\bf Case 1.} $P_4\geq 2$;\\
\noindent {\bf Case 2.} $P_4=1$ and $P_5\geq 3$.

 Set $m_0=4$ (resp. $5$) in Case $1$ (resp. Case $2$).  Take a sub-pencil $\Lambda\subset |m_0K_X|$.   We use the same set up as in \ref{setup}.
 We may and do assume that $\pi$ factors through $\nu$,  i.e. there is a birational
morphism $\mu:X'\lrw \tilde{X}$ so that $\pi=\nu\circ\mu$ and that
$\mu^*(\{Q_\lambda\})\cup \{\text{exc. divisors of}\ \mu\}$ has
simple normal crossing supports.

Since
$$\mu_*\OO_{X'}(K_{X'}+\roundup{\mu^*(Q_\lambda)})\subseteq
\mu_*\OO_{X'}(K_{X'}+\mu^*\roundup{Q_\lambda})\subseteq
\OO_{\tilde{X}}(K_{\tilde{X}}+\roundup{Q_\lambda}),$$ it is
sufficient to prove the birationality of
$\Phi_{|K_{X'}+\roundup{\mu^*(Q_\lambda)}|}$. We write
$Q_\lambda':=\mu^*(Q_\lambda)\equiv \lambda\pi^*(K_X)$.

We have an induced fibration $f:X'\lrw \Gamma$ onto a smooth
projective curve. Let $F$ be a general fiber of $f$.  Recall that
we have $m_0\pi^*(K_X)\sim_{\bQ} \theta F+E_{\Lambda}'$ for a
positive integer $\theta$ and an effective $\bQ$-divisor $E_{\Lambda}'$ on $X'$.

Without lose of generality, we may assume $p_g(X)=1$ (the case
with $p_g(X)>1$ is much easier). Clearly one has $p_g(F)>0$.
\medskip

\noindent
 {\bf Claim \ref{pg>0}.1.}  One has $h^0(X',
K_{X'}+\roundup{Q_\lambda'})>0$ for  $\lambda>2m_0+1$.\\
By  Lemma \ref{cr},
$$\pi^*(K_X)|_F\equiv \frac{1}{m_0+1}\sigma^*(K_{F_0})+H_{m_0}$$
for certain effective $\bQ$-divisor $H_{m_0}$ on $F$.  Since
$Q_\lambda'-F-\frac{1}{\theta}E_{\Lambda}'\equiv
(\lambda-\frac{m_0}{\theta})\pi^*(K_X)$ is nef and big, Kawamata-Viehweg
vanishing implies the surjective map:
\begin{equation}\label{e1} H^0(X', K_{X'}+\roundup{Q_\lambda'-\frac{1}{\theta}E_{\Lambda}'})\lrw H^0(F, K_F+\roundup{Q_\lambda'-\frac{1}{\theta}E_{\Lambda}'}|_F).\end{equation}
Let
$$\begin{array}{ll} Q_{\lambda,F}:
&=(Q_\lambda'-\frac{1}{\theta}E_{\Lambda}')|_F-(m_0+1)H_{m_0}-\sigma^*(K_{F_0})\\
& \equiv
(\lambda-\frac{m_0}{\theta}-m_0-1)\pi^*(K_X)|_F,\end{array}$$
which is nef and big. Since $p_g(F)>0$, we have
\begin{eqnarray*}
&&h^0(F, K_F+\roundup{Q_\lambda'-\frac{1}{\theta}E_{\Lambda}'}|_F)\\
&\geq& h^0(F, K_F+\roundup{(Q_\lambda'-\frac{1}{\theta}E_{\Lambda}')|_F-(m_0+1)H_{m_0})}\\
&=& h^0(F, K_F+\roundup{Q_{\lambda,F}}+\sigma^*(K_{F_0}))\geq 2
\end{eqnarray*}
by \cite[Lemma 2.14]{Explicit2}. This verifies the Claim.
\medskip

\noindent{\bf Claim \ref{pg>0}.2.} {\em The linear system
$|K_{X'}+\roundup{Q_\lambda'}|$ distinguishes different general
fibers of $f$ for any $\lambda >3m_0+1$. }
\begin{proof}  When $g(\Gamma)=0$, we consider
${Q}_{\zeta}':=Q_\lambda'-F-\frac{1}{\theta}E_{\Lambda}'\equiv
\zeta\pi^*(K_X)$ with
$\zeta=\lambda-\frac{m_0}{\theta}$.
It is clear that $K_{X'}+\roundup{Q_\lambda '}\geq
(K_{X'}+\roundup{{Q}_{\zeta}'})+F$ and hence $|K_{X'}+\roundup{Q_\lambda'}|$
distinguishes different general fibers by Claim \ref{pg>0}.1 since $\zeta>2m_0+1$.

When $g(\Gamma)>0$, we have $\theta \geq 2$. Pick two different
general fibers $F_1$ and $F_2$ of $f$. The vanishing theorem gives
the surjective map:
\begin{eqnarray*}
&&H^0(X', K_{X'}+\roundup{Q_\lambda'-\frac{2}{\theta}E_{\Lambda}'})\\
&\lrw& \oplus_{i=1}^2 H^0(F_i,
(K_{X'}+\roundup{Q_\lambda'-F_1-F_2-\frac{2}{\theta}E_{\Lambda}'}+
F_1+F_2)|_{F_i})
\end{eqnarray*}
where we note that
$(K_{X'}+\roundup{Q_\lambda'-F_1-F_2-\frac{2}{\theta}E_{\Lambda}'})|_{F_i}\geq
0$ due to Claim \ref{pg>0}.1 and the fact $(F_1+F_2)|_{F_i}=0$. Hence $|K_{X'}+\roundup{Q_\lambda'}|$ distinguishes $F_1$ and $F_2$.
\end{proof}

Now we discuss two cases independently.
\medskip

\noindent
{\bf Case 1.} $P_4\geq 2$.\\
If $F$ is a $(1,2)$ surface, we take
$|G|:=\text{Mov}|\sigma^*(K_{F_0})|$ and a general member $C \in |G|$.  By
the surjection map in (\ref{e1}) and Claim \ref{pg>0}.2, it is
sufficient to study the linear system
$|K_F+\roundup{(Q_\lambda'-\frac{1}{\theta}E_{\Lambda}')|_F}|$.
For any $t$, let $$L_{\lambda,
t}:=(Q_\lambda'-\frac{1}{\theta}E_{\Lambda}')|_F-t
\sigma^*(K_{F_0})-5t H_4\equiv
(\lambda-\frac{4}{\theta}-5t)\pi^*(K_X)|_F,$$ which is nef and big
as long as $\lambda-\frac{4}{\theta}-5t >0$. Notice also that
$(Q_\lambda'-\frac{1}{\theta}E_{\Lambda}')|_F \ge L_{\lambda, t} +
t\sigma^*(K_{F_0})$. For simplicity, $L_{\lambda,0}$ is denoted by $L_\lambda$.
In fact, for $\lambda>14$ and by \cite[Lemma 2.14]{Explicit2},  one has
$$
 K_F+\roundup{Q_\lambda'-\frac{1}{\theta}E_{\Lambda}'}|_F
\geq  (K_F+\roundup{L_{\lambda,2}}+\sigma^*(K_{F_0}))+C \geq C.
$$
Thus
$|K_F+\roundup{(Q_\lambda'-\frac{1}{\theta}E_{\Lambda}')|_F}|$
separates different general curves $C$ when $\lambda>14$.
{}Restricting to the curve $C$, one sees by the vanishing theorem
that
$$|K_F+\roundup{(Q_\lambda'-\frac{1}{\theta}E_{\Lambda}')|_F}_{|C}
\ge |K_F+ \roundup{L_{\lambda,1}}+C|_{|C} =
|K_C+\roundup{L_{\lambda,1}}|_C|.$$
Since $\deg(\roundup{L_{\lambda,1}}|_C)\geq (\lambda-\frac{4}{\theta}-5)\xi>2$
for $\xi \ge 2/7$ (cf.  Table A3 with $m_0=4$).  Thus
$\Phi_{|K_{X'}+\roundup{Q_\lambda'}|}$ separates points on the general curve $C$ and
hence is birational when $\lambda>16$.

Assume that $F$ is not a (1,2) surface. We would like to study
$|K_F+\roundup{L_\lambda}|$ where
$L_\lambda:=(Q_\lambda'-\frac{1}{\theta}E_{\Lambda}')|_F$, making use of the
relation (\ref{e1}).  If $K_{F_0}^2\geq 2$, Inequalities (\ref{j3}), (\ref{j5})
imply
$$L_\lambda^2\geq \frac{2(\lambda-4)^2}{25}>8$$
whenever $\lambda> 14$.  If $F$ is a $(1,1)$ surface, then we have $q(X)=g(\Gamma)\geq 0$ and $h^2(\OO_X)=0$ as seen in the proof of Case 2 of Corollary \ref{pg=1}.
Hence we have $\chi(\OO_X)\leq 0$ and Reid's Riemann-Roch formula gives $P_5>P_4\geq 2$. In particular, we have $P_5\geq 3$.  We omit the discussion for the situation when $|5K_{X'}|$ and $|4K_{X'}|$ are composed with the same pencil since that is a comparatively much better case. So may assume that $|5K_{X'}||_F$ is moving on $F$. If we take $|G_1|:=\text{Mov}|\roundup{5\pi^*(K_X)}|_F|$, we have $\beta_{G_1}=\frac{1}{5}$. Then,
by Lemma \ref{cr} and Lemma \ref{>1}, we have
$$L_\lambda^2\geq \frac{(\lambda-4)^2}{25}(\sigma^*(K_{F_0})\cdot G_1)\geq
\frac{2(\lambda-4)^2}{25}>8$$
whenever $\lambda> 14$.
{}Finally, for both cases,  $(L_\lambda\cdot
\tilde{C})\geq \frac{2(\lambda-4)}{5}\geq 4$ for $\lambda\geq 14$
and for any very general curve $\tilde{C}$ on $F$. Therefore, by
Lemma \ref{masek}, $|K_{F}+\roundup{L_\lambda}|$ gives a birational map
when $\lambda \geq 14$.

Hence, when $P_4\geq 2$, $\Phi_{|K_{X'}+\roundup{Q'_\lambda}|}$ is
birational for $\lambda >16$.
\medskip

\noindent
{\bf Case 2.}  $P_4=1$ and $P_5\geq 3$.\\
We set $m_0=5$.
If $d_5=1$, we set $\Lambda=|5K_X|$. Then we are in much better situation than that of $P_3=2$  since we have $\theta \geq 2$ (and noting that $\frac{\theta}{m_0}=\frac{2}{5}>\frac{1}{3}$).
We omit the details and leave this as an exercise to interested
readers.

If $d_5\geq 2$, we take a sub-pencil $\Lambda\subset|5K_{X}|$ and $\Lambda$ induces a fibration $f:X'\lrw \Gamma$ onto a smooth complete curve $\Gamma$. As we have seen in Case 3 of Corollary \ref{pg=1}, the general fiber $F$ satisfies $K_{F_0}^2\geq 2$. For the similar reason, we can take $m_1=5$ and $|G|:=\text{Mov}|m_1K_{X'}|_F|$. 
Pick a generic irreducible element $C$ in $|G|$. Lemma \ref{cr}
implies $\xi=(\pi^*(K_X)\cdot C)\geq
\frac{1}{6}(\sigma^*(K_{F_0})\cdot C)\geq \frac{1}{3}$. We may
write $5\pi^*(K_X)|_F\equiv C+N_5$ for an effective $\bQ$-divisor
$N_5$ on $F$. For two different generic irreducible curves $C_1$ and $C_2$ in
$|G|$, we set
$$L_{\lambda,2}:=(Q_\lambda'-\frac{1}{\theta}E_{\Lambda}')|_F-C_1-C_2-2N_5,$$
and
$$L_{\lambda,1}:=(Q_\lambda'-\frac{1}{\theta}E_{\Lambda}')|_F-C-N_5$$
respectively. It is clear that they are both nef and big
 whenever $\lambda>15$.

Thanks to the  vanishing theorem, we have the surjective map:
\begin{eqnarray*}
H^0(F, K_F+\roundup{L_{\lambda}-2N_5})&\lrw&
H^0(C_1, K_{C_1}+\roundup{L_{\lambda,2}}|_{C_1}+C_2|_{C_1})\\
&&\oplus\  H^0(C_2, K_{C_2}+\roundup{L_{\lambda,2}}|_{C_2}+C_1|_{C_2})
\end{eqnarray*}
if $\lambda >15$. It is
clear that
$$H^0(C_i,K_{C_i}+ \roundup{L_{\lambda,2}}_{|{C_i}}+C_{2-i}|_{C_i}) \ne
0$$
since $L_{\lambda,2}$ is nef and big.  Hence $|K_F+\roundup{(Q_\lambda'-\frac{1}{\theta}E_{\Lambda}')|_F-2N_5}|=|K_F+\roundup{L_{\lambda}-2N_5}|$
separates different general curves $C$ in $|G|$.
This also implies that
$|K_F+\roundup{(Q_\lambda'-\frac{1}{\theta}E_{\Lambda}')}|$ can
distinguishes $C_1$ and $C_2$. 
Now applying the vanishing theorem once more, we get the
surjective map: $$H^0(F, K_F+\roundup{L_{\lambda}-N_5}) \lrw
H^0(C, K_{C}+\roundup{L_{\lambda,1}}_{|{C}})$$
with
$$\deg(\roundup{L_{\lambda,1}}_{|{C}})\geq (\lambda-\frac{5}{\theta}-5)\xi>2$$
whenever $\lambda>16$ for $\xi \ge 1/3$. Thus, by Theorem
\ref{birat}, $|K_{X'}+\roundup{Q_\lambda'}|$ gives a birational
map for $\lambda>16$. So we conclude the statement of the theorem.
\end{proof}

\begin{thm}\label{b4}(=Theorem \ref{4folds}) Let $V$ be a nonsingular projective 4-fold of general type. Then,
\begin{itemize}
\item[(1)] when $p_g(V)\geq 2$, $\Phi_{m,V}$ is birational for all
$m\geq 35$; \item[(2)] when $p_g(V)\geq 19$, $\Phi_{m,V}$ is
birational for all $m\geq 18$.
\end{itemize}
\end{thm}
\begin{proof}
Let $Z$ be the minimal model of $V$.
We set $m_0=1$, $\Lambda=|K_Z|$ and use the set up in \ref{setup}.  Thus we have an induced fibration $f:Z'\lrw \Gamma$.

{}First we consider the case $\dim \Gamma=1$. Recall that we have
$M_{\Lambda}\equiv  \theta F$ for a general fiber $F$ of $f$,
where $\theta \geq p_g(Z)-1$.  It is clear that, when $m\geq 3$ ,
$|mK_{Z'}|$ distinguishes different general fibers of $f$. Pick a
general fiber $F=X'$, which is a nonsingular projective 3-fold of
general type with $p_g(X')>0$. Replace by its birational model, we
may assume that there is a birational morphism $\nu:X'\lrw X$ onto
a minimal model.   By Lemma \ref{cr}, we have
$$\pi^*(K_Z)|_{X'}\equiv\frac{\theta}{\theta+1}\nu^*(K_{X})+J_1$$
for an effective $\bQ$-divisor $J_1$ on $X'$. When $m$ is large,
since $(m-1)\pi^*(K_Z)-X'-\frac{1}{\theta}E_{\Lambda}'$ is nef and
big, Kawamata-Viehweg vanishing implies:
\begin{eqnarray*}
&&|K_{Z'}+\roundup{(m-1)\pi^*(K_Z)-\frac{1}{\theta}E_{\Lambda}'}||_{X'}\\
&=&|K_{X'}+\roundup{(m-1)\pi^*(K_Z)-\frac{1}{\theta}E_{\Lambda}'}_{X'}|\\
&\lsgeq&|K_{X'}+\roundup{R_m}|
\end{eqnarray*}
where
$R_m:=((m-1)\pi^*(K_Z)-X'-\frac{1}{\theta}E_{\Lambda}')|_{X'}$.
In fact, we have
\begin{eqnarray*}
R_m&\equiv& (m-1-\frac{1}{\theta})
\pi^*(K_Z)|_{X'}\\
&\equiv &
(\frac{m\theta}{\theta+1}-1)\nu^*(K_X)+(m-1-\frac{1}{\theta})J_1
\end{eqnarray*}
Since $\frac{m\theta}{\theta+1}-1>16$ whenever either $m\geq
18$ and $p_g(Z)\geq 19$ or $m\geq 35$ and $p_g(Z)\geq 2$, Theorem
\ref{pg>0} implies that
$|K_{X'}+\roundup{R_m-(m-1-\frac{1}{p})J_1}|$ gives a birational
map. Thus statements of the theorem follow in this case.

Next we consider the case $\dim \Gamma\geq 2$. By definition, $\theta=1$. Clearly it is sufficient to consider $\Phi_{|mK_{Z'}|}|_{X'}$ for a general member $X'\in |M_{\Lambda}|$. We consider a general $X'$ and, similarly, we may assume that there is a birational morphism $\nu:X'\lrw X$ onto a minimal model $X$. Then Kawamata's extension theorem \cite[Theorem A]{EXT} still implies
\begin{equation}\pi^*(K_Z)|_{X'}\geq \frac{1}{2}\nu^*(K_X).\label{a1}\end{equation}
We consider the linear system $|M_{\Lambda}|_{X'}|$, which may be assumed to be base point free modulo further birational modifications. Pick a generic irreducible element $S$ of this linear system. We clearly have
$$\pi^*(K_Z)|_{X'}\geq M_{\Lambda}|_{X'}\geq S.$$
Modulo $\bQ$-linear equivalence, one has
$$2S\leq (\pi^*(K_Z)+X')|_{X'}\leq K_{X'}.$$
Thus Kawamata's extension theorem gives
\begin{equation} \nu^*(K_{X})|_S\geq \frac{2}{3}\sigma^*(K_{S_0})\label{a2}\end{equation}
where $\sigma:S\lrw S_0$ is the contraction onto the minimal model $S_0$ of $S$.
Both (\ref{a1}) and (\ref{a2}) imply
$$\pi^*(K_Z)|_{S}\geq \frac{1}{3}\sigma^*(K_{S_0}).$$
Write $\pi^*(K_Z)|_{X'}\equiv S+H_{\Lambda}$ where $H_{\Lambda}$ is an effective $\bQ$-divisor on $X'$.
Since $R_m-S-H_{\Lambda}\equiv (m-3)\pi^*(K_Z)|_{X'}$ is nef and big, the vanishing theorem implies
$$\begin{array}{ll}
|K_{X'}+\roundup{R_m-H_{\Lambda}}|_{|S}
&=|K_S+\roundup{R_m-S-H_{\Lambda}}_{|S}|\\
&\lsgeq |K_S+\roundup{R_{m,S}}|
\end{array}$$
where $R_{m,S}:=(R_m-S-H_{\Lambda})|_S$. Note that
$$\begin{array}{ll}
R_{m,S}&\equiv (m-3)\pi^*(K_Z)|_S\\
&\equiv \frac{m-3}{3}\sigma^*(K_{S_0})+E_{m,S}
\end{array}$$
where $E_{m,S}$ is an effective $\bQ$-divisor on $S$. Now it is
clear by Lemma \ref{masek} that $|K_S+\roundup{R_{m,S}-E_{m,S}}|$
gives a birational map whenever $m\geq 15$. Again Kawamata-Viehweg
vanishing shows that $|K_{X'}+\roundup{R_m}|$ distinguishes
different elements $S$. Thus we have shown that $\Phi_{m,Z}$ is
birational for all $m\geq 15$ in this case. We are done.
\end{proof}

G. Brown and M. Reid kindly informed us the following interesting
canonical 4-folds:

\begin{exmp} The general hypersurfaces $W_{36}\subset \bP(1,1,3,5,7,18)$ and $Y_{36}\subset\bP(1,1,4,5,6,18)$ have canonical singularities, $p_g=2$. It is clear that the 17-canonical maps of these two 4-folds are not birational.
\end{exmp}

\begin{prbm} It is a very interesting problem to find more examples of 4-folds of general type so that $\Phi_m$ is not birational for large $m$.
\end{prbm}

\section{\bf Appendix. Tables}

\centerline{\textbf{Table F-0}}
\begin{center}
\noindent{\tiny
\begin{tabular}{|l|l|l|l|l|}
\hline
 Types & $B_X$ & $\chi$& $K_X^3$& $\delta(X)$\\
\hline
2a& $\{4\times(1, 2), (4, 9),  (2, 5), (5, 13), 3\times(1,3), 2\times(1, 4)\}$&    2   &1/1170 &18\\
\hline
41& $\{5\times(1, 2), (4, 9), 2\times(3, 8), (1, 3), 2\times(2,7)\}$&2&1/252&  13\\
\hline
\end{tabular}}
\end{center}
\smallskip

\centerline{\textbf{Table F-1}}
\begin{center}
\noindent{\tiny
\begin{tabular}{|l|l|l|l|l|}
\hline
 Types & $B_X$ & $\chi$& $K_X^3$& $\delta(X)$\\
\hline
2& $\{4\times(1, 2), (4, 9), 2\times(2, 5), (3, 8), 3\times(1, 3), 2\times(1, 4)\}$&    2   &1/360& 15\\
\hline
3&$\{6\times(1, 2), (5, 11), 4\times(2, 5), (3, 8), 4\times(1, 3), (2, 7), 2\times(1, 4)\}$&    3   &23/9240    &15\\
\hline
5.1& $\{7\times(1, 2), (4, 9),3\times(2, 5), (5, 13), 4\times(1, 3), (3, 11), (1, 4)\}$&   3   &61/25740   &15\\
\hline
5.2&$\{7\times(1,2), (4, 9), 2\times(2,5), (7, 18), 4\times(1,3), (3, 11), (1, 4)\}$&  3   &1/660  &15\\
\hline
5.3&$\{7\times(1,2), (4, 9), (2, 5), (9, 23), 4\times(1,3), (3, 11), (1, 4)\}$&   3   &47/45540   &15\\
\hline
5a& $\{7\times(1,2), (4, 9), (11, 28), 4\times(1,3), (3, 11), (1, 4)\}$&3 &1/1386&    15\\
\hline
5b& $\{7\times(1,2), (4, 9), 3\times(2,5), (5, 13), 4\times(1,3),(4, 15)\}$ &  3   &1/1170 &15\\
\hline
53a &$\{3\times(1,2), (4, 9), 2\times(2,5), (5, 13), 3\times(1,3),  (1, 5)\}$& 2   &1/1170 &15\\
\hline
\end{tabular}}
\end{center}
\smallskip

\centerline{\textbf{Table F-2}}
\begin{center}
\noindent{\tiny
\begin{tabular}{|l|l|l|l|l|}
\hline
 Types & $B_X$ & $\chi$& $K_X^3$& $\delta(X)$\\
\hline
1&  $\{5\times(1, 2),  (3, 7), 3\times (2, 5), 3\times(1, 3), (3, 11)\}$&  2   &3/770  &14\\
\hline
4& $\{7\times(1,2),(4, 9), 4\times(2, 5), (4, 11), 3\times(1, 3), (2, 7), 2\times(1, 4)\}$& 3&  13/3465 &14\\
\hline
4.5 &$\{7\times(1, 2), (4, 9), 4\times(2, 5), (5, 14), 2\times(1, 3),  (2, 7), 2\times(1, 4)\}$ &3& 1/630&  14\\
\hline
5&  $\{7\times(1, 2), (4, 9), 4\times(2, 5), (3, 8), 4\times(1, 3),  (3, 11), (1, 4)\}$&   3   &17/3960&   14\\
\hline
5.4 &$\{7\times(1, 2), (4, 9), 4\times(2, 5), (3, 8), 4\times(1,3), (4, 15)\}$&    3   &1/360& 14\\
\hline
6&  $\{9\times(1,2), 2\times(3,7), (2, 5), (4, 11), 4\times(1,3), 2\times(2,7), (1, 5)\}$&  3   &1/462  &14\\
\hline
7&  $\{5\times(1,2), (4, 9), (3, 7), 5\times(1,3), (2, 7), (1, 5)\}$& 2   &1/630  &14\\
\hline
7a& $\{5\times(1,2),(7, 16), 5\times(1,3), (2, 7), (1, 5)\}$& 2&  1/1680  &14\\
\hline
10& $\{8\times(1,2), (4, 9), (3, 7), 2\times(3,8),5\times(1,3), (2, 7), (1, 4), (1, 5)\}$& 3   &1/630& 14\\
\hline
11&$\{  9\times(1,2), 2\times(3,7),(3, 8), (4, 11), 3\times(1,3),(3, 10), (1, 4), (1, 5)\}$&   3   &3/3080&    14\\
\hline
12& $\{9\times(1,2), (4, 9), (2, 5), 2\times(3,8),4\times(1,3), 2\times(2,7),  (1, 5)\}$&   3   &1/252  &14\\
\hline
12.1&$\{9\times(1,2),(4, 9), (5, 13), (3, 8), 4\times(1,3), 2\times(2,7), (1, 5)\}$&   3&  67/32760&   14\\
\hline
12a&$\{9\times(1,2), (4, 9), (8, 21), 4\times(1,3), 2\times(2,7), (1, 5)\}$&3  &1/630& 14\\
\hline
14&$\{  10\times(1,2), (3, 7), 2\times(2,5), 2\times(3,8), 6\times(1,3), 2\times(2,7), $&&&\\
&$(1, 4), (1, 5)\}$&4   &1/770  &14\\
\hline
15&$\{11\times(1,2), (4, 9), (3, 7), 2\times(2,5), (3, 8), (4, 11), 5\times(1,3), 2\times(2,7), $&&&\\
&$(1, 4), (1, 5)\}$&  4&  71/27720&   14\\
\hline
15.1&$\{11\times(1,2), (4, 9), (3, 7), 2\times(2,5), (7, 19), 5\times(1,3),2\times(2,7), $&&&\\
&$(1, 4),(1, 5)\}$&  4   &47/23940&  14\\
\hline
15.2&$\{11\times(1,2), (7, 16), 2\times(2,5), (3, 8), (4, 11), 5\times(1,3), $&&&\\
&$2\times(2,7),    (1, 4), (1, 5)\}$&   4&  29/18480&   14\\
\hline
16&$\{11\times(1,2),(4, 9), (3, 7), 2\times(2,5), 2\times(3,8), 6\times(1,3), (2, 7), $&&&\\
&$(3, 11), (1, 5)\}$&   4&  43/13860&   14\\
\hline
\end{tabular}}\end{center}

\begin{center}
\noindent{\tiny
\begin{tabular}{|l|l|l|l|l|}

\hline
16.1&$\{11\times(1,2), (4, 9), (3, 7), (2, 5), (5, 13), (3, 8), 6\times(1,3),(2, 7), $&&&\\
&$(3, 11), (1, 5)\}$&  4   &85/72072&  14\\
\hline
16.2&$\{11\times(1,2), (7, 16), 2\times(2,5), 2\times(3,8), 6\times(1,3), (2, 7), $&&&\\
&$(3, 11), (1, 5)\}$&   4   &13/6160&   14\\
\hline
16.4&$\{11\times(1,2), (7, 16), 2\times(2,5), 2\times(3,8), 6\times(1,3), (5, 18), (1, 5)\}$&   4&  1/720   &14\\
\hline
16.5&$\{11\times(1,2), (4, 9), (3, 7), 2\times(2,5), 2\times(3,8), 6\times(1,3), (5, 18),$&&&\\
&$(1, 5)\}$& 4   &1/420  &14\\
\hline

17  &$\{9\times(1,2), 2\times(3,7), 2\times(4,11), 3\times(1,3), (2, 7), (1, 4), (1, 5)\}$& 3&  3/1540  &14\\
\hline

18& $\{9\times(1,2), 2\times(3,7), (3, 8), (4, 11), 4\times(1,3), (3, 11), (1, 5)\}$   &3  &23/9240&   14\\
\hline
18b&    $\{9\times(1,2), 2\times(3,7), (7,19), 4\times(1,3), (3, 11), (1, 5)\}$    &3& 83/43890&   14\\
\hline

20  &$\{7\times(1,2), 2\times(4,9), (2, 5), (3, 8), 6\times(1,3), (2, 7), (1, 4), (1, 5)\}$&   3&  1/504   &14\\
\hline
21  &$\{6\times(1,2), (4, 9), (3, 8), 3\times(1,3), (3, 10), (1, 5)\}$&   2   &1/360& 14\\
\hline
23  &$\{8\times(1,2), (4, 9), (3, 7), (2, 5), (4, 11), 4\times(1,3), (3, 10), (1, 4), $&&&\\
&$(1, 5)\}$&  3   &19/13860   &14\\
\hline

25  &$\{9\times(1,2), (5, 11), (4, 9), 3\times(2,5), (3, 8), 7\times(1,3), 2\times(2,7), $&&&\\
&$(1, 4), (1, 5)\}$& 4   &47/27720&  14\\
\hline
25a &$\{9\times(1,2), (9, 20), 3\times(2,5), (3, 8), 7\times(1,3), 2\times(2,7), (1, 4), $&&&\\
&$(1, 5)\}$& 4   &1/840  &14\\
\hline
26  &$\{10\times(1,2), 2\times(4,9), 3\times(2,5), (4, 11), 6\times(1,3), 2\times(2,7), $&&&\\
&$(1, 4), (1, 5)\}$&   4   &41/13860&  14\\
\hline
27& $\{10\times(1,2), 2\times(4,9), 3\times(2,5), (3, 8), 7\times(1,3), (2, 7), $&&&\\
&$(3, 11), (1,5)\}$   &4  &97/27720&  14\\
\hline
27.3&$\{10\times(1,2), 2\times(4,9), 3\times(2,5), (3,8), 7\times(1,3), (5, 18), (1, 5)\}$  &4  &1/360& 14\\
\hline
28&$\{5\times(1,2), (5, 11), (3, 8), 4\times(1,3), (2, 7), (1, 5)\}$  &2  &23/9240&   14\\
\hline 29& $\{6\times(1,2), (4,9), (4, 11), 3\times(1,3), (2, 7), (1, 5)\}$& 2&  13/3465 &14\\
\hline
29.1&$\{6\times(1,2), (4, 9), (5, 14), 2\times(1,3), (2,7), (1, 5)\}$ &2& 1/630&  14\\
\hline
30  &$\{7\times(1,2), (5,11), (3,7), (2,5), (4, 11), 5\times(1,3), (2,7), (1,4), (1,5)\}$&    3&  1/924&  14\\
\hline
31  &$\{7\times(1,2), (5,11), (3,7), (2,5), (3,8), 6\times(1,3), (3,11), (1,5)\}$&3&  1/616&  14\\
\hline
32  &$\{8\times(1,2), (4,9), (3,7), (2,5), (4, 11), 5\times(1,3), (3,11), (1,5)\}$&3  &2/693  &14\\
\hline

32a &$\{8\times(1,2), (7,16), (2, 5), (4, 11), 5\times(1,3), (3,11), (1,5)\}$&3&  1/528&  14\\
\hline
33& $5\times(1,2), 2\times(3,7), (3,8), (1,3), (3,10), (2,7)\}$&  2   &1/840& 14\\
\hline
34& $\{7\times(1,2), (4,9), (3,7), 2\times(2,5), (3,8), 3\times(1,3), 3\times(2,7)\}$&  3&  1/360&  14\\
\hline
34a &$\{7\times(1,2), (7, 16), 2\times(2,5), (3, 8), 3\times(1,3), 3\times(2,7)\}$  &3& 1/560   &14\\
\hline
35  &$\{5\times(1,2), 2\times(3,7), (4,11), (1, 3), 2\times(2,7)\}$&   2&  1/462&  14\\
\hline
36  &$\{4\times(1,2), (4,9), (3,7), (2, 5), 2\times(1,3), (3,10), (2,7)\}$&   2&  1/630   &14\\
\hline
36a &$\{4\times(1,2), (7,16), (2, 5), 2\times(1,3), (3,10), (2,7)\}$& 2   &1/1680&    14\\
\hline
36b &$\{4\times(1,2), (4, 9), (3,7), (2,5), 2\times(1,3), (5, 17)\}$  &2& 4/5355& 14\\
\hline
37  &$6\times(1,2), 2\times(4,9), 3\times(2,5), 4\times(1,3), 3\times(2,7)\}$&   3   &1/315  &14\\
\hline
38  &$\{3\times(1,2), (5,11), (3,7), (2, 5), 3\times(1,3), 2\times(2,7)\}$&    2   &1/770& 14\\
\hline
39  &$\{7\times(1,2), (4,9), (3,7),(2, 5), 2\times(3,8), 2\times(1,3), (3,10), (2,7), (1,4)\}$&3&  1/630   &14\\
\hline
40& $\{9\times(1,2), 2\times(4, 9), 3\times(2,5), 2\times(3,8), 4\times(1,3), 3\times(2,7), (1,4)\}$& 4   &1/315& 14\\
\hline
42  &$\{6\times(1,2), (5,11), (3, 7), (2, 5), 2\times(3,8), 3\times(1,3), 2\times(2,7), (1,4)\}$&   3&  1/770&  14\\
\hline
43  &$\{7\times(1,2), (4,9), (3,7), (2, 5), (3, 8), (4, 11), 2\times(1,3), 2\times(2,7), (1, 4)\}$ &3& 71/27720&   14\\
\hline
43.1&   $\{7\times(1,2), (7, 16), (2, 5), (3,8), (4,11), 2\times(1,3), 2\times(2,7), (1,4)\}$& 3   &29/18480&  14\\
\hline
43c &$\{7\times(1,2), (7,16), (2, 5), (7, 19), 2\times(1,3), 2\times(2,7), (1, 4)\}$&  3   &31/31920&  14\\
\hline
43.2&   $\{7\times(1,2), (4,9), (3, 7), (2, 5), (7, 19), 2\times(1,3), 2\times(2,7),(1, 4)\}$  &3  &47/23940&  14\\
\hline
44  &$\{7\times(1,2), (4,9), (3, 7), (2, 5), 2\times(3,8), 3\times(1,3), (2, 7), (3, 11)\}$ &  3   &43/13860   &14\\
\hline
44.1&   $\{7\times(1,2), (4, 9), (3, 7), (5, 13), (3, 8), 3\times(1,3), (2, 7), (3, 11)\}$&   3   &85/72072&  14\\
\hline
44.2&   $\{7\times(1,2), (4, 9), (3, 7), (2, 5), 2\times(3,8), 3\times(1,3), (5, 18)\}$ &  3   &1/420  &14\\
\hline
44.3    &$\{7\times(1,2), (7, 16), (2, 5), 2\times(3,8), 3\times(1,3), (2,7), (3, 11)\}$&3 &13/6160&   14\\
\hline
44c &$\{7\times(1,2), (7, 16), (2, 5), 2\times(3, 8), 3\times(1,3), (5, 18)\}$ &3& 1/720&  14\\
\hline
45  &$\{3\times(1,2), 2\times(4, 9), (3, 8), 3\times(1,3), (2, 7), (1, 4)\}$   &2& 1/504   &14\\
\hline
46& $\{6\times(1,2), 2\times(4,9), 2\times(2,5), (3, 8), 3\times(1,3), (3, 10), (2, 7), (1, 4)\}$&  3   &1/504  &14\\
\hline
46b &$\{6\times(1,2), 2\times(4,9), 2\times(2,5), (3, 8), 3\times(1,3), (5, 17), (1, 4)\}$&3    &7/6120&    14\\
\hline
48  &$\{4\times(1,2), (4,9), (3, 7), (4, 11), (1, 3), (3, 10), (1, 4)\}$&    2&  19/13860    &14\\
\hline
49& $\{5\times(1,2), (5, 11), (4, 9), 2\times(2,5), (3, 8), 4\times(1,3), 2\times(2,7), (1,4)\}$&3  &47/27720&  14\\
\hline
49a &$\{(5\times(1,2),  (9, 20), 2\times(2,5), (3, 8), 4\times(1,3), 2\times(2,7), (1, 4)\}$&   3&  1/840   &14\\
\hline
50  &$\{6\times(1,2), 2\times(2,9), 2\times(2,5), (4, 11), 3\times(1,3), 2\times(2,7), (1, 4)\}$ &3  &41/13860&  14\\
\hline
51& $\{6\times(1,2), 2\times(4,9), 2\times(2,5), (3, 8), 4\times(1,3), (2, 7), (3, 11)\}$&  3   &97/27720&  14\\
\hline
51.1&$\{6\times(1,2),2\times(4,9), (2, 5), (5, 13), 4\times(1,3), (2, 7), (3, 11)\}$&  3   &71/45045&  14\\
\hline
52  &$\{4\times(1,2), (3, 7), 2\times(2,5), 2\times(3,8),  2\times(1,3), (1, 5)\}$&2&   1/420&  14\\
\hline

53& $3\times(1,2),(4, 9), 3\times(2,5), (3,8), 3\times(1,3), (1, 5)\}$&    2   &1/360& 14\\
\hline
54  &$\{2\times(1,2), 2\times(3,7), 3\times(2,5), (3, 8), (1, 3), (2, 7)\}$    &2& 1/840&  14\\
\hline
\end{tabular}}\end{center}

\begin{center}
\noindent{\tiny
\begin{tabular}{|l|l|l|l|l|}
\hline

56  &$\{(1, 2), (4, 9), (3, 7), 4\times(2,5), 2\times(1,3), (2, 7)\}$ &2& 1/630&  14\\
\hline
58  &$\{4\times(1,2), (4,9), (3, 7), 4\times(2,5), 2\times(3,8), 2\times(1,3), (2, 7), (1, 4)\}$&   3   &1/630  &14\\
\hline
59  &$\{2\times(1,2), 2\times(3,7), 2\times(2,5), (3, 8), (4, 11), (1,4)\}$&   2&  3/3080  &14\\
\hline
60  &$3\times(1,2), 2\times(4,9), 5(2,5), (3,8), 3\times(1,3), (2,7), (1,4)\}$ &3  &1/504& 14\\
\hline
62  &$\{(1, 2), (4, 9), (3, 7), 3\times(2,5), (4, 11), (1, 3), (1, 4)\}$&    2&  19/13860&   14\\
\hline
\end{tabular}}
\end{center}


\vbox{\centerline{\textbf{Table II--1}}

\begin{center}
\noindent{\tiny
\begin{tabular}{|l|l|l|l|l|}
\hline
No. & $B^0(X)$ &  $K_X^3$&$\chi$ &$(P_3,P_4,P_5,P_6)$\\
\hline
1&  $\{5*(1, 2), 2 * (1, 3)\}$&{1}/{6}&0 & $(3,5,7,11)$\\
\hline
2&  $\{5*(1, 2), (1, 3), (1,4)\}$&{1}/{12}&0 & $(3,5,6,9)$\\
\hline
3&$\{18 *(1, 2),  (1, 3), \}$&{1}/{3}&1 & $(1,5,6,13)$\\
\hline

4&$\{(18-4t) *(1, 2), 3t * (1,3),  (1, 4) \}, t=0,1,2$&{1}/{4}&1 & $(1+t,5,5+t,11+t)$\\
\hline

5&$\{(18-4t) *(1, 2), 3t * (1,3),  (1, 5) \},5 \le r \le 12; t=0,1,2$&{1}/{r}&1 & $(1+t,5,5+t,10+t)$\\
\hline
6&$\{(17-4t) *(1, 2), (2+3t) * (1, 3)\}, t=0,1,2$&{1}/{6}&1 & $(1+t,4,4+t,9+t)$\\
\hline
7&$\{(14-4t) *(1, 2), (2+3t) * (1, 3), 2 * (1,4)\}, t=0,1$&{1}/{6}&1 & $(2+t,5,5+t,10+t)$\\
\hline
8&$\{(14-4t) *(1, 2), (2+3t) * (1, 3),  (1,4), (1,5)\}, t=0,1$&{7}/{60}&1 & $(2+t,5,5+t,9+t)$\\
\hline
9&$\{(14-4t) *(1, 2), (2+3t) * (1, 3),  (1,4), (1,6)\}, t=0,1$&{1}/{12}&1 & $(2+t,5,5+t,9+t)$\\
\hline
10&$\{(14-4t) *(1, 2), (1+3t) * (1, 3), 3 * (1,4)\}, t=0,1$&{1}/{12}&1 & $(2+t,5,4+t,8+t)$\\
\hline
11&$\{(17-4t) *(1, 2), (1+3t) * (1, 3), (1,4)\}, t=0,1,2$&{1}/{12}&1 & $(1+t,4,3+t,7+t)$\\
\hline
\end{tabular}}\end{center}}
\vskip0.5cm

\vbox{\centerline{\textbf{Table II-2}}
\medskip

\begin{center}
\noindent{\tiny
\begin{tabular}{|l|l|l|l|l|}
\hline
No. & $B^0(X)$ &  $K_X^3$&$\chi$ &$(P_3,P_4,P_5,P_6)$\\
\hline
1&$\{27 *(1, 2), 2 * (1,3), (1,r)\}$&$\frac{1}{6}+\frac{1}{r}$&2 & $(0,5,5,13)$\\
\hline
2&$\{(27-4t) *(1, 2), (1+3t) * (1,3)$, & & &\\
&$2 * (1,4)\}, t=0,1.$&{1}/{3}&2 & $(t,5,4+t,12+t)$\\
\hline
3&$\{(27-4t) *(1, 2), (1+3t) * (1,3), $&&&\\
&$(1,4), (1,r)\},5 \le r; t=0,1,2.$&$\frac{1}{12}+\frac{1}{r}$&2 & $(t,5,4+t,11+t)$\\
\hline
4&$\{(27-4t) *(1, 2), (1+3t) * (1,3), $&&&\\
&$(1,r_1), (1,r_2) \}, (r_1,r_2) \in I_4; t=0,1,2,3.$&$\frac{1}{r_1}+\frac{1}{r_2}-\frac{1}{6}$&2 & $(t,5,4+t,10+t)$\\
\hline
5&$\{(26-4t) *(1, 2), (4+3t) * (1,3)\}, t=0,1.$&{1}/{3}&2 & $(t,4,4+t,12+t)$\\
\hline
6&$\{(27-4t) *(1, 2), 3t *(1, 3), 3* (1,4)\}, $&&&\\
&$t=0,1,2,3.$&{1}/{4}&2 & $(t,5,3+t,10+t)$\\
\hline
7&$\{(27-4t) *(1, 2), 3t *(1, 3), 2* (1,4), $&&&\\
&$(1,r)\},5 \le r \le 12; t=0,1,2,3.$&{1}/{r}&2 & $(t,5,3+t,9+t)$\\
\hline
8&$\{(27-4t) *(1, 2), 3t *(1, 3), (1,4), (1,r_1),$&&&\\
&$(1,r_2)\}, (r_1,r_2) \in I_3; t=0,1,2,3.$&$\frac{1}{r_1}+\frac{1}{r_2}-\frac{1}{4}$&2 & $(t,5,3+t,8+t)$\\
\hline
9&$\{(27-4t)*(1, 2),3t *(1, 3), 3 * (1,5)\}, $&&&\\
&$t=0,1,2,3.$&{1}/{10}&2 & $(t,5,3+t,7+t)$\\
\hline
10&$\{(26-4t) *(1, 2), (3+3t) * (1, 3), (1,4)\}, $&&&\\
&$t=0,1,2,3.$&{1}/{4}&2 & $(0,4,3+t,10+t)$\\
\hline
11&$\{(26-4t) *(1, 2), (3+3t) * (1, 3), (1,r)\},$&&&\\
&$5 \le r \le 12; t=0,1,2,3.$&{1}/{r}&2 & $(0,4,3+t,9+t)$\\
\hline

12&$\{(25-4t) *(1, 2), (5+3t) * (1, 3)\}, t=0,1,2,3.$&{1}/{6}&2 & $(t,3,2+t,8+t)$\\
\hline
13&$\{(26-4t) *(1, 2), (2+3t) * (1, 3), 2 * (1,4)\}, $&&&\\
&$t=0,1,2,3.$&{1}/{6}&2 & $(t,4,2+t,8+t)$\\
\hline
14&$\{(26-4t) *(1, 2), (2+3t) * (1, 3), (1,4), (1,5)\}, $&&&\\
&$t=0,1,2,3.$&{7}/{60}&2 & $(t,4,2+t,7+t)$\\
\hline
15&$\{(26-4t) *(1, 2), (2+3t) * (1, 3), (1,4), $&&&\\
&$(1,6)\}, t=0,1,2,3.$&{1}/{12}&2 & $(t,4,2+t,7+t)$\\
\hline
16&  $\{(25-4t)*(1, 2), (4+3t) * (1, 3), (1,4)\}, $&&&\\
&$t=0,1,2,3.$&{1}/{12}&2 & $(t,3,1+t,6+t)$\\
\hline
17&$\{(26-4t) *(1, 2), (1+3t)*(1, 3), 3 * (1,4)\}, $&&&\\
&$t=0,1,2,3.$&{1}/{12}&2 & $(t,4,1+t,6+t)$\\
\hline
\end{tabular}}\end{center}}
where
\smallskip

{\tiny
$ \begin{array}{ll}I_4 &=\{ (r_1,r_2)| 1/r_1+1/r_2 \ge 1/4, r_i \ge 5\} \\
 &=\{(5,5),\ldots ,(5,20), (6,6), \ldots,(6,12),
(7,7),(7,8),(7,9),(8,8)\} \\
I_3 & = \{(r_1,r_2)| 1/r_1+1/r_2 \ge 1/3, r_i \ge 5\} \\
& = \{(5,5), (5,6), (5,7), (6,6)\}.
\end{array}$
}

\vbox{\centerline{\textbf{Table II-3}}

\begin{center}
\noindent{\tiny
\begin{tabular}{|l|l|l|l|l|}
\hline
 & $B^0(X)$ &  $K_X^3$&$\chi$ &$(P_3,P_4,P_5,P_6)$\\
\hline
1&$\{32 *(1, 2), 5 * (1,3), 2*(1,4), (1,r)\}, 5 \le r.$&$\frac{1}{6}+\frac{1}{r}$&3 & $(0,5,4,13)$\\
\hline
2&$\{(32-4t) *(1, 2), (5+3t) * (1,3), (1,4), $&&&\\
&$(1,r_1), (1,r_2)\}, (r_1,r_2) \in I_6, t \le 1.$&$\frac{1}{r_1}+\frac{1}{r_2}-\frac{1}{12}$&3 & $(t,5,4+t,12+t)$\\
\hline
3&$\{(32-4t) *(1, 2), (5+3t) * (1,3), (1,r_1), $&&&\\
&$(1,r_2), (1,r_3)\}, (r_1,r_2,r_3) \in J, t \le 2.$&$\frac{1}{r_1}+\frac{1}{r_2}+\frac{1}{r_3}-\frac{1}{3}$&3 & $(t,5,4+t,11+t)$\\
\hline

4&$\{(31-4t) *(1, 2), (7+3t) * (1,3), $&&&\\
&$2 * (1,4)\}, t \le 1.$&{1}/{3}&3 & $(t,4,3+t,12+t)$\\
\hline
5&$\{(31-4t) *(1, 2), (7+3t) * (1,3), $&&&\\
&$(1,4), (1,r)\},5 \le r; t \le 2.$&$\frac{1}{12}+\frac{1}{r}$&3 & $(t,4,3+t,11+t)$\\
\hline
6&$\{(31-4t) *(1, 2), (7+3t) * (1,3), $&&&\\
&$(1,r_1), (1,r_2) \}, (r_1,r_2) \in I_4; t \le 3.$&$\frac{1}{r_1}+\frac{1}{r_2}-\frac{1}{6}$&3 & $(t,4,3+t,10+t)$\\
\hline
7&$\{(30-4t) *(1, 2), (10+3t) * (1,3)\}, t=0,1.$&{1}/{3}&3 & $(t,3,3+t,12+t)$\\
\hline

8&$\{(31-4t) *(1, 2), (6+3t) *(1, 3), $&&&\\
&$3* (1,4)\}, t=0,1,2,3.$&{1}/{4}&3 & $(t,4,2+t,10+t)$\\
\hline
9&$\{(31-4t) *(1, 2), (6+3t) *(1, 3),$&&&\\
&$ 2* (1,4), (1,r)\},5 \le r \le 12; t=0,1,2,3.$&{1}/{r}&3 & $(t,4,2+t,9+t)$\\
\hline
10&$\{(31-4t) *(1, 2), (6+3t) *(1, 3), $&&&\\
&$(1,4), (1,r_1),(1,r_2)\}, (r_1,r_2) \in I_3; t \le 3.$&$\frac{1}{r_1}+\frac{1}{r_2}-\frac{1}{4}$&3 & $(t,4,2+t,8+t)$\\
\hline
11&$\{(31-4t)*(1, 2), (6+3t) *(1, 3), $&&&\\
&$3 * (1,5)\}, t=0,1,2,3.$&{1}/{10}&3 & $(t,4,2+t,7+t)$\\
\hline
12&$\{(30-4t) *(1, 2), (9+3t) * (1, 3), $&&&\\
&$(1,4)\}, t=0,1,2,3.$&{1}/{4}&3 & $(0,3,2+t,10+t)$\\
\hline
13&$\{(30-4t) *(1, 2), (9+3t) * (1, 3), $&&&\\
&$(1,r)\},5 \le r \le 12; t=0,1,2,3.$&{1}/{r}&3 & $(0,3,2+t,9+t)$\\
\hline
14&$\{(30-4t) *(1, 2), (8+3t) * (1, 3), $&&&\\
&$2 * (1,4)\}, t=0,1,2,3.$&{1}/{6}&3 & $(t,3,1+t,8+t)$\\
\hline
15&$\{(30-4t) *(1, 2), (8+3t) * (1, 3), $&&&\\
&$(1,4), (1,5)\}, t=0,1,2,3.$&{7}/{60}&3 & $(t,3,1+t,7+t)$\\
\hline
16&$\{(30-4t) *(1, 2), (8+3t) * (1, 3), $&&&\\
&$(1,4), (1,6)\}, t=0,1,2,3.$&{1}/{12}&3 & $(t,3,1+t,7+t)$\\
\hline

17&$\{(30-4t) *(1, 2), (7+3t)*(1, 3), $&&&\\
&$3 * (1,4)\}, t=0,1,2,3.$&{1}/{12}&3 & $(t,3,t,6+t)$\\
\hline

\end{tabular}}\end{center}}
where


{\tiny
$ \begin{array}{ll}I_4 &=\{ (r_1,r_2)| 1/r_1+1/r_2 \ge 1/4, r_i \ge 5\} \\
 &=\{(5,5),\ldots ,(5,20), (6,6), \ldots,(6,12),
(7,7),(7,8),(7,9),(8,8)\} \\
I_3 & = \{(r_1,r_2)| 1/r_1+1/r_2 \ge 1/3, r_i \ge 5\} \\
& = \{(5,5), (5,6), (5,7), (6,6)\}.\\
I_6 & = \{(r_1,r_2)| 1/r_1+1/r_2 \ge 1/6, r_i \ge 5\} \\
& = \{(5,s_5), (6,s_6), (7,s_7), (8,s_8), (9,s_9), (10, s_{10}), (11,11),(11,12), (11,13),(12,12)\},\\
& 5 \le s_1, 6 \le s_2, 7 \le s_7 \le 42, 8 \le s_8 \le 24, 9 \le s_9 \le 18, 10 \le s_{10} \le 15.\\
J & = \{(r_1,r_2,r_3)| 1/r_1+1/r_2+1/r_3 \ge 5/12, r_i \ge 5\} \\
& = \{(5,5,s_1), (5,6,s_2), (5,7,s_3), (5,8,8), (5,8,9), (5,8,10), (5,9,9), (6,6, s_4), (6,7,7),(6,7,8), \\
& (6,7,9),(6,8,8), (7,7,7)\}, 5 \le s_1 \le 60, 6 \le s_2 \le 20, 7 \le s_3 \le 13, 6 \le s_4 \le 12.\\
\end{array}$
}

\end{document}